\newlength{\defbaselineskip}
\newcommand{\R}{\mathbb{R}}
\newcommand{\dist}{\mbox{dist}}
\newcommand{\car}{{\raise4pt\hbox{$\chi$}}}
\newcommand{\sop}{{\rm supp\,}}
\newcommand{\Div}{\hbox{\rm div\,}}
\def\rn{\mathbb{R}^{N}}
\def\into{\int_{\Omega}}
\newcommand{\DM }{\mathcal{DM}^\infty }
\newcommand{\res}{\!\!\mathop{\hbox{
			\vrule height 7pt width .5pt depth 0pt
			\vrule height .5pt width 6pt depth 0pt}}
	\nolimits}
	\long\def\salta#1{\relax}
\theoremstyle{plain}
\newtheorem{theorem}{Theorem}[section]
\newtheorem{Proposition}[theorem]{Proposition}
\newtheorem{lemma}[theorem]{Lemma}
\theoremstyle{definition}
\newtheorem{defin}[theorem]{Definition}
\newtheorem{remark}[theorem]{Remark}
\newtheorem{example}{Example}
\theoremstyle{remark}
\def\bk{\color{black}}
\def\dys{\displaystyle}
\begin{document}
\title{The Dirichlet problem for singular elliptic equations with general nonlinearities}

\author[V. De Cicco]{Virginia De Cicco}
\author[D. Giachetti]{Daniela Giachetti}
\author[F. Oliva]{Francescantonio Oliva}
\author[F. Petitta]{Francesco Petitta}

\address{Dipartimento di Scienze di Base e Applicate per l' Ingegneria, ``Sapienza" Universit\`a di Roma, Via Scarpa 16, 00161 Roma, Italia}
\address[Virginia De Cicco]{virginia.decicco@sbai.uniroma1.it}
\address[Daniela Giachetti]{daniela.giachetti@sbai.uniroma1.it}
\address[Francescantonio Oliva]{francesco.oliva@sbai.uniroma1.it}
\address[Francesco Petitta]{francesco.petitta@sbai.uniroma1.it}

\keywords{1-Laplacian, p-Laplacian, Nonlinear elliptic equations, Singular elliptic equations} \subjclass[2010]{35J60, 35J75, 34B16,35R99, 35A02}

\maketitle

\begin{abstract}
In this paper, under very general assumptions, we prove existence and  regularity of distributional solutions to homogeneous Dirichlet problems of the form 
$$\begin{cases}
					 \displaystyle - \Delta_{1} u = h(u)f &  \text{in}\, \Omega, \\
					 u\geq 0& \text{in}\ \Omega,\\
					 u=0 & \text{on}\ \partial \Omega,			
				 \end{cases}
$$
where, $\Delta_{1} $ is the $1$-laplace operator, $\Omega$ is a bounded open subset of $\R^N$ with Lipschitz boundary,   $h(s)$ is a continuous function  which may become singular at $s=0^{+}$, and $f$ is a nonnegative datum in $L^{N,\infty}(\Omega)$ with suitable small norm.  Uniqueness of solutions is also shown  provided  $h$ is decreasing and $f>0$. As a by-product of our method a general  theory for the same problem involving the $p$-laplacian as principal part, which is missed in the literature, is established. The main assumptions we use are also further discussed in order to show their optimality. 
\end{abstract}
\tableofcontents
\section{Introduction}
The main goal of this paper is to deal with existence, regularity (and uniqueness, if attainable) of distributional solutions to problems involving the $1$-laplacian as a principal part and a  lower order terms which can become singular on the set where the solution $u$ vanishes. Formally the problem looks like
 \begin{equation} \label{pb1lapintro}
				 \begin{cases}
					 \displaystyle -\Delta_1 u := - \operatorname{div}\left(\frac{D u}{|D u|}\right)= h(u)f &  \text{in}\, \Omega, \\
					 u\geq 0& \text{in}\ \Omega,\\
					 u=0 & \text{on}\ \partial \Omega,			
				 \end{cases}
			 \end{equation}
where $\Omega$ is a bounded open subset of $\R^N$ with Lipschitz boundary,  $0 \leq f\in L^{N,\infty}(\Omega)$, $h(s)$ is a nonnegative continuous function defined in $[0,\infty)$, bounded at infinity, possibly singular at $s=0$ (i.e. $h(0)=\infty$) without any monotonicity property.

The natural space for this kind of problems is  $BV$ (or its local version $BV_{\rm loc}$), the space of functions of bounded variation, i.e. the space of $L^1$ functions whose gradient is a Radon measure with finite (or locally finite) total variation. In \eqref{pb1lapintro} $\frac{D u}{|D u|}$ is the Radon-Nikodym derivative of the measure $Du$ with respect to its total variation $|Du|$.

Problems involving the $1$-laplace operator, which is known to be closely related to the mean curvature operator (\cite{OsSe}),  enter in  a variety of both practical and theoretical  issues as for instance  in image restoration and in  torsion problems (\cite{K, ka, Sapiro, M, BCRS}). We refer the interested reader to the monograph \cite{ACM} for a more complete review on applications.

\medskip 

From the mathematical point of view, in order to give sense to the left hand side term of the equation in \eqref{pb1lapintro}, Andreu, Ballester, Caselles and Maz\'on  (\cite{ABCM})  proposed to use  the Anzellotti theory of pairings $(z, Du)$ of $L^\infty$--divergence--measure vector fields $z$ and the gradient of a $BV$ function $u$. In their definition the  vector field $z$ belongs to $\mathcal{D}\mathcal{M}^\infty(\Omega)$ (the space of $L^\infty$ vector fields whose distributional divergence is a Radon measure with bounded total variation), and is  such that both $\|z\|_\infty\le 1$ and $(z, Du)=|Du|$; in this way  the vector $z$ is intended to play the role of the ratio $\frac{D u}{|D u|}$.

\medskip 

Problem \eqref{pb1lapintro} has been studied by several authors when $h\equiv 1$ (see \cite{CT,KS, MST1} and references therein) under suitable smallness assumption on the datum $f$. The non-autonomous case has also been treated; the eigenvalue problem is discussed in \cite{KS}, the absorption case is handled  in \cite{MP}, while  the (sub-)critical exponent problem has been also addressed (see for instance \cite{D, MS}). Finally, some early results can be found in \cite{DGS} concerning  the mild singular model case  $h(s)={s^{-\gamma}}$, $0<\gamma\leq 1$.  As one of our main point also concerns the singular case of a nonlinearity  $h(s)$ which is  unbounded near the origin $s=0$, it is expected  that,  if $f$ vanishes in a portion of $\Omega$ of positive Lebesgue measure, then also the region where $u$ degenerates at zero plays a non-trivial role; in this case, in fact,   the characteristic function $\chi_{\{u>0\}}$ may appear in the definition of solution of problem \eqref{pb1lapintro} (see Definition \ref{weakdefnonnegative} in  Section \ref{sec:fnonnegativa} below). 

\medskip

To be more transparent and to fix the ideas, we  assume, at first,  that  $f$ is a strictly positive function in $L^{N}(\Omega)$;  most of the  main issues  are already present in this less general case in which, although, some  further  properties for solutions to \eqref{pb1lapintro} can be proven  easing the overall  the presentation. In this case,  for instance, $h(u)f\in L^1(\Omega)$ and uniqueness holds, in a suitable class of functions, if  $h$ is decreasing.

Let us  explain the meaning of solution in this particular non-degenerate case. If $0<f\in L^{N}(\Omega)$,  a function $u\in BV_{\rm{loc}}(\Omega)\cap L^\infty(\Omega)$ will be a distributional   solution to problem \eqref{pb1lapintro} if there exists $z\in \mathcal{D}\mathcal{M}^\infty(\Omega)$ with $||z||_{\infty}\le 1$ such that
					$-\operatorname{div}z =h(u)f \ \ \ \text{in  } \mathcal{D'}(\Omega)$, $(z,Du)=|Du|$ as measures in  $\Omega$, and one of the following conditions holds: \begin{equation}					\lim_{\epsilon\to 0}  \fint_{\Omega\cap B(x,\epsilon)} u (y) dy = 0 \ \ \ \text{or} \ \ \ [z,\nu] (x)= -1 \label{def_bordointro}\ \ \ \text{for  $\mathcal{H}^{N-1}$-a.e. } x \in \partial\Omega.				\end{equation}	

					Condition \eqref{def_bordointro}
		is a (very weak) way to give sense to the boundary condition $u=0$ at $\partial\Omega$ in this case where $u$ is only in $BV_{\rm{loc}}(\Omega)\cap L^{\infty}(\Omega)$, and  where $[z,\nu]$ is the weak normal  trace of $z$ defined in \cite{An} (see Section \ref{sec2}  below).

In order to describe the core of our results, we will also  assume that  $h:[0,\infty)\to [0,\infty]$  is a continuous function, finite outside the origin, such that $h(0)\not =0$, 
	 	 \begin{equation*}\label{h1intro}
	 	 \displaystyle \exists\;{c_1},\gamma,k_0>0\;\ \text{such that}\;\  h(s)\le \frac{c_1}{s^\gamma} \ \ \text{if} \ \ s<k_0,
	 	 \end{equation*}
		 and
			 \begin{equation*}\label{h2intro}
				 \lim_{s\to \infty} h(s):=h(\infty)<\infty\,.
			 \end{equation*}
More general right hand sides  (for \eqref{pb1lapintro}) $F(x, u)$ are considered in Section \ref{more} (see Theorem \ref{existencenonnegativefs}) in order to include more general growth as, for instance,  
$$
\dys h(s)\sim {\rm exp}\left({\rm exp} \left(\frac{1}{s}\right)\right), 
$$ 	
as well as the general non-autonomous case. 

\medskip 

As far as the right hand side is then concerned in \eqref{pb1lapintro}, in order to get existence, we require a smallness condition on the datum that $0 < f\in L^{N}(\Omega)$ depending on the behavior of the function $h$ at infinity, i.e. \begin{equation}\label{smallintro}\displaystyle ||f||_{L^N(\Omega)}< \frac{1}{\mathcal{S}_1 h(\infty)}\,,\end{equation} where $\mathcal{S}_1$ is the best constant in the Sobolev inequality for functions in $W^{1,1}_{0}(\Omega)$. In particular, \eqref{smallintro} encodes the fact  that no smallness condition is assumed if $h(\infty)=0$. This shows that the behavior of $h$ at infinity can induce a  first regularizing effect on the problem if compared  to the non-singular case (e.g. $h\equiv 1$).  In fact, if $h(\infty)$ is a suitable  positive constant then  the smallness assumption \eqref{smallintro}  (or its general version for data in $L^{N,\infty}(\Omega)$, see Section \ref{lore}) is necessary and sharp as it can be deduced by comparison with the results in    \cite{CT} and \cite{MST1}. A second regularizing effect appears if $h(0)=\infty$;   the presence of a singular term  will imply that $u>0$ in $\Omega$ that is again in contrast with the non-singular case; we shall come back on this fact in a while.

We also remark that the equality  $(z,Du)=|Du|$ in the definition of solutions of \eqref{pb1lapintro} does not depend on $h$, which is in some sense natural if we think to the homogeneity of the principal part. However, this requires a formal proof (in particular, to handle the jump part of the pairing) which is based on a result in \cite{CDC}. Similarly, we are able to prove the weak boundary condition \eqref{def_bordointro} independently of the function $h$.

As we already mentioned,  this case of a  strictly positive datum $f$ enjoys particular features most of them summarized in  Theorem \ref{regularity} below. In particular one shall  see that 
$h(u)f \in L^1(\Omega)$ that,  together with the fact that $u^{\sigma} \in BV(\Omega)$ (with $\sigma= \max(1,\gamma)$), will imply uniqueness of  solutions, if we assume that $h$ is decreasing (see Theorem \ref{uniqueness}). Observe that uniqueness does not hold, in general, for merely nonnegative data (see \cite{DGS}).

\medskip 				

Let us briefly describe the technique we exploit in order to get existence. As noticed in \cite{ka}, one can  naturally handle with \eqref{pb1lapintro} via  approximation with  problems having $p$-laplacians  principal part  (with $p>1$) which in our case can look like
\begin{equation} \label{pbplapintro}
				 \begin{cases}
				 \displaystyle - \Delta_p u_p= h(u_p)f &  \text{in}\, \Omega, \\
								 u_p\geq 0& \text{in}\ \Omega,\\
					 u_p=0 & \text{on}\ \partial \Omega\,.			
				 \end{cases}
			 \end{equation}
 Note that, at the best of our knowledge,  even the existence of solutions $u_p$ for 
\eqref{pbplapintro} is still missed in the literature under this generality and this will require a preliminary study that we shall present in Section \ref{carrie} in the even more general case of a nonnegative $f\in L^{(p^{\ast})'}(\Omega)$.  In fact, existence of solutions $u_p$ of 
\eqref{pbplapintro} has been proven in the model case, i.e. $h(s)={s^{-\gamma}}$, $\gamma>0$,  (\cite{CST,DCA, DGS}), where, if $\gamma\le1$, $u_p$ is shown to have global finite energy in the sense that $u_p\in W^{1,p}_0(\Omega)$, as well as in the case $p=2$ (see \cite{BO, GMM2, OP}). On the other hand, if $\gamma> 1$, solutions $u_{p}$ have in general only locally finite energy, that is  $u_p\in W^{1,p}_{\rm{loc}}(\Omega)$, so that  the boundary datum needs to be assigned through a suitable weaker condition than the usual trace sense  (see \cite{BO, DCA, DCO, GMM, OP1, OP} for further remarks on this fact) eventually producing \eqref{def_bordointro} in the limit as $p\to 1^{+}$. Recently, existence of solutions to \eqref{pbplapintro}  has been shown in case of  a general function $h$ and a measure datum $f$ (\cite{DCDO}).

Note that this approximation approach with respect to  the parameter $p$ requires a priori estimates on the solutions $u_p$ of \eqref{pbplapintro} that are  independent of $p$, and that will be proven (see Section \ref{4.4}) provided \eqref{smallintro} holds. This will allow us to pass to the limit obtaining a solution to \eqref{pb1lapintro}. As we already mentioned,  if $h(0)=\infty$ then $u_{p}\longrightarrow u$ a.e. on $\Omega$ as $p\to 1^{+}$, and $u>0$; this  should be compared with the non-singular case in which, under assumption \eqref{smallintro}, $u_{p}\longrightarrow 0$ instead (see for instance \cite{CT}).

\medskip

In Section \ref{sec:fnonnegativa} we will show how our results extend  to the case of a general nonnegative  datum $f$.
Indeed, if $f$ can vanish on a subset of $\Omega$ of positive measure, the situation becomes much more delicate and, as we said,  it will  involve the region ${\{u>0\}}$ in an essential way.  
Among other technical points that will be discussed later, as in  \cite{DGS}, we will be forced to ask, in the definition of solutions to  \eqref{pbplapintro},  that  $\chi_{\{u>0\}} \in BV_{\rm{loc}}(\Omega)$ and that the equation 
\begin{equation}\label{anzintro}
-(\operatorname{div}z)\chi^*_{\{u>0\}} = h(u)f
\end{equation}
is satisfied in the sense of distributions, where $\chi_{\{u>0\}}$ is the characteristic function of the region ${\{u>0\}}$ and $\chi^*_{\{u>0\}}$ is its precise representative
(in the sense of the $BV$--function). This fact will lead to some technical complications in the proof of existence of a solution. Observe that the notion of solution  we use here will essentially coincide with the previous if $f>0$ (see Remark \ref{rema} below). 

\medskip 
We specify some further peculiarities of this case. First  notice that requiring    $\chi_{\{u>0\}}$ to be  a locally $BV$--function is equivalent to the fact that the region ${\{u>0\}}$ is a set of locally finite perimeter.

 Also, one may observe  that equation \eqref{anzintro} can also  be written as follows (see Remark \ref{remarkequazione} below)

\begin{equation*}
-\Div\big(z\chi_{\{u>0\}}\big)+|D \chi_{\{u>0\}}|=h(u)f,
\end{equation*}\bk
where the left hand side is a sum of an operator in divergence form and an additional term $|D \chi_{\{u>0\}}|$, which is a measure concentrated on the reduced boundary $\partial^*\{u>0\}$. Moreover, as $f$ is assumed to be merely  nonnegative, we are only able to prove that $h(u)f \in L^1_{\rm{loc}}(\Omega)$  (instead of $h(u)f \in L^1(\Omega)$ which holds true in the case of positive $f$)
and, as we mentioned, no uniqueness of solutions holds (see \cite{DGS}).

\medskip 
 
 In Section \ref{lore} we handle with $L^{N,\infty}$-data and here one needs to use the machinery of Lorentz spaces that will be briefly summarized for the convenience of the reader. The extension given in  this section is not only technical as it can be shown to be optimal (see Remark \ref{optimo} below).

\medskip 

Finally, in Section \ref{energiafinita}, we will also discuss the possibility to have finite energy solutions, i.e. $u\in BV(\Omega)$. As we said, global energy estimates  were only known in the mild model case (i.e. $\gamma\leq 1$). Although we get rid of this fact by working with suitable  compositions of the solutions, in the general framework of strong singularities the {\it global} $BV$ regularity of the solutions $u$  is still an open question;  Section \ref{energiafinita} will be devoted to give some partial answers and insights on this issue. We conclude by investigating the case where a more general right-hand side of the form $F(x,u)$ is considered (Section \ref{more}).

\section{Notations and Preliminaries}\label{sec2}
\setcounter{equation}{0}

In the entire paper $\mathcal H^{N-1}(E)$  denotes the $(N - 1)$-dimensional Hausdorff measure of a set $E$ while, for simplicity,  $|E|$ will stand for the classical  $N$-dimensional  Lebesgue measure. 
Here $\Omega$ is an open bounded subset of $\R^N$ ($N\ge 1$) with Lipschitz boundary.  We will denote by $\mathcal{M}(\Omega)$ the space of Radon measures with finite total variation over $\Omega$.

\medskip 
We denote by 
$$\DM(\Omega):=\{ z\in L^\infty(\Omega;\R^N) : \operatorname{div}z \in \mathcal{M}(\Omega) \},$$ 

and by $\DM_{\rm{loc}}(\Omega)$ the vector fields $z\in L^\infty(\Omega;\R^N)$  such that  $\operatorname{div}z\in \mathcal{M}(\omega)$,  $\forall \omega\subset\!\subset\Omega$\bk.
\\ As usual
$$BV(\Omega):=\{ u\in L^1(\Omega) : Du \in \mathcal{M}(\Omega, \R^N) \},$$ 
and its local counterpart, which is the space of functions $u\in BV(\omega)$ for all $\omega\subset \subset \Omega$, is denoted by $BV_{\rm{loc}}(\Omega)$.
We recall that for $BV(\Omega)$ a norm is given by 
$$ \|u\|_{BV(\Omega)}=\int_\Omega |u|\, + \int_\Omega|Du|\,,$$
or by 
$$\displaystyle \|u\|_{BV(\Omega)}=\int_{\partial\Omega}
|u|\, d\mathcal H^{N-1}+ \int_\Omega|Du|\,.$$
By $S_u$ we mean the set of $x\in\Omega$ such that $x$ is
not a Lebesgue point of $u$, by $J_u$ the jump set and by $u^* $ the precise representative of $u$.
For more properties regarding $BV$ spaces we refer to \cite{AFP}, from which we mainly derive our notations. We also refer to \cite{EG, Zi}.

The theory of $L^\infty$-divergence-measure vector fields is due
to Anzellotti \cite{An} and to Chen and
Frid \cite{CF}.   First of all it can be shown that if $z\in \DM(\Omega)$ then $\operatorname{div}z $ is absolutely continuous with respect to $\mathcal H^{N-1}$.

We define the following distribution $(z,Dv): C^1_c(\Omega)\to \mathbb{R}$ as 

\begin{equation}\label{dist1}
\langle(z,Dv),\varphi\rangle:=-\int_\Omega v^*\varphi\operatorname{div}z-\int_\Omega
vz\cdot\nabla\varphi \,\quad \varphi\in C_c^1(\Omega).
\end{equation}

In Anzellotti's theory we need some compatibility conditions, such as $\Div z\in L^1(\Omega)$ and $v\in BV(\Omega)\cap L^\infty(\Omega)$, or $\Div z$ a Radon measure with finite total variation and $v\in BV(\Omega)\cap L^\infty(\Omega)\cap C(\Omega)$. Anzellotti's definition of $( z, Dv)$ can be extended to the case in which $\Div z$ is a Radon measure with finite total variation and $v\in BV(\Omega)\cap L^\infty(\Omega)$ (see \cite[Appendix A]{MST2} and \cite[Section 5]{C}). 
Moreover (\cite{DGS}), if $v\in BV_{\rm{loc}}(\Omega)\cap L^1(\Omega,\operatorname{div}z)$ and $z \in \DM_{\rm{loc}}(\Omega)$,    the distribution defined in \eqref{dist1} is a Radon measure having local finite total variation satisfying 
	\begin{equation*}\label{ec:2}
	|\langle   (z, Dv), \varphi\rangle| \le \|\varphi\|_{L^{\infty}(\omega) }\| z
	\|_{\infty,\omega} \int_{\omega} |Dv|\,,
	\end{equation*}
	for all open set $\omega \subset\subset \Omega$ and for all $\varphi\in
	C_c^1(\omega)$. Here, and throughout the paper,  we use the simplified notation $\|\cdot\|_{q,\omega}$ to indicate the norm of the vector space $L^{q}(\omega,\rn)$; also $\|\cdot\|_{q}:=\|\cdot\|_{q,\Omega}$. 

Moreover  the measure $\vert (z, Dv) \vert$ is absolutely continuous with respect to the measure $\vert Dv \vert$ and, if $v\in BV(\Omega)$, then $(z, Dv)$ has finite total variation.

\medskip
We have the following proposition proved in \cite{DGS}.

\begin{Proposition}\label{nuova}
	Let  $z\in\DM_{\rm{loc}}(\Omega)$ and let $v\in
	BV_{\rm{loc}}(\Omega)\cap L^\infty(\Omega)$. Then $z v\in\DM_{\rm{loc}}(\Omega)$. Moreover the following formula holds in the sense of measures
	\begin{equation*}
	\label{ALPHA}
	\Div(z v)=(\Div z) v^*+(z,Dv).
	\end{equation*}
\end{Proposition}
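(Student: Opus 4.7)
The plan is to prove the formula $\Div(zv)=(\Div z)v^{*}+(z,Dv)$ directly by testing against $\varphi\in C_{c}^{1}(\omega)$ for an arbitrary $\omega\subset\subset\Omega$, and then to read off from this identity that $zv\in\DM_{\rm loc}(\Omega)$ because the right-hand side is a Radon measure of locally finite total variation.

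First I would observe that, since $z\in L^{\infty}(\Omega;\mathbb{R}^{N})$ and $v\in L^{\infty}(\Omega)$, the product $zv$ lies in $L^{\infty}(\Omega;\mathbb{R}^{N})$, so its distributional divergence is at least well defined. Then I would unpack the definition of the pairing. For $\varphi\in C_{c}^{1}(\omega)$ the formula
$$\langle(z,Dv),\varphi\rangle=-\int_{\Omega}v^{*}\varphi\,d(\Div z)-\int_{\Omega}vz\cdot\nabla\varphi\,dx$$
from \eqref{dist1} rearranges to
$$-\int_{\Omega}vz\cdot\nabla\varphi\,dx=\langle(z,Dv),\varphi\rangle+\int_{\Omega}v^{*}\varphi\,d(\Div z).$$
The left-hand side is exactly $\langle\Div(zv),\varphi\rangle$ by the definition of distributional divergence, so one obtains
$$\langle\Div(zv),\varphi\rangle=\langle(z,Dv),\varphi\rangle+\langle v^{*}(\Div z),\varphi\rangle$$
for every test function. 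This is the announced identity at the level of distributions.

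To upgrade the identity to an equality of Radon measures I would argue that each summand on the right is a Radon measure with locally finite variation: the measure $(z,Dv)$ has local total variation controlled by $\|z\|_{\infty,\omega}\int_{\omega}|Dv|$ by the estimate recalled before the statement, and $v^{*}(\Div z)$ is well defined because $v^{*}$ exists outside an $\mathcal{H}^{N-1}$-negligible set while $\Div z\ll\mathcal{H}^{N-1}$ on $\omega$ (this is the absolute continuity property of $\DM^{\infty}$ vector fields recalled just above the proposition); hence $v^{*}$ is a $|\Div z|$-measurable bounded function on $\omega$ and $v^{*}(\Div z)$ has variation bounded by $\|v\|_{\infty}|\Div z|(\omega)$. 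Consequently $\Div(zv)$, which equals their sum as distributions on $\omega$, is itself a Radon measure with $|\Div(zv)|(\omega)<\infty$, proving $zv\in\DM_{\rm loc}(\Omega)$.

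The main subtlety, and the step I would be most careful about, is the interpretation of the term $v^{*}(\Div z)$: one has to make sure that $v^{*}$ is defined $|\Div z|$-a.e. and is $|\Div z|$-measurable, which is precisely what the absolute continuity $|\Div z|\ll\mathcal{H}^{N-1}$ combined with the $BV$ fine-property of $v$ delivers. Once this is in place, the test against $\varphi\in C_{c}^{1}(\omega)$ is routine and the whole proof is just a rearrangement of \eqref{dist1}. Since $\omega\subset\subset\Omega$ was arbitrary the identity holds as a measure equality on all of $\Omega$.
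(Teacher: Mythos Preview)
Your argument is correct. The paper does not give its own proof of this proposition but simply cites \cite{DGS}; however, as you observed, once the pairing is \emph{defined} by \eqref{dist1} (which is precisely how this paper sets things up for the local case), the Leibniz identity $\Div(zv)=(\Div z)v^{*}+(z,Dv)$ is nothing but a rearrangement of that definition tested against $\varphi\in C_c^1(\omega)$. Your handling of the only genuine point---that $v^{*}$ is defined $|\Div z|$-a.e.\ because $\Div z\ll\mathcal H^{N-1}$ and the precise representative of a $BV$ function exists off an $\mathcal H^{N-1}$-null set, so that $v^{*}(\Div z)$ is a well-defined bounded measure on each $\omega\subset\subset\Omega$---is exactly what is needed to promote the distributional identity to an equality of Radon measures and to conclude $zv\in\DM_{\rm loc}(\Omega)$.

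One small remark: in the classical Anzellotti setting the pairing is often introduced differently (via approximation of $v$ by smooth functions, or via a normal-trace construction), and then the Leibniz rule requires an actual proof. The approach in this paper, following \cite{DGS,C,MST2}, builds the rule into the definition \eqref{dist1}; the nontrivial content has been shifted to the statement, recalled just above the proposition, that \eqref{dist1} defines a Radon measure dominated by $\|z\|_{\infty,\omega}|Dv|$. You used that statement correctly.
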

We observe that, since for every \(v\in BV_{\rm{loc}}(\Omega)\) the measure $(z, Dv)$ is absolutely continuous with respect to $|Dv|$,
it holds
\begin{equation*}\label{theta}
(z, Dv) = \theta(z,Dv,x) \, {|Dv|},
\end{equation*}
where \(\theta(z, Dv, \cdot)\) denotes
the Radon-Nikodym derivative of $(z, Dv)$ with respect to $|Dv|$.
 By  Proposition 4.5 (iii)  of \cite{CDC}, 
for every $z\in \mathcal{D}\mathcal{M}^\infty_{\rm{loc}}(\Omega)$ and \(u\in BV_{\rm{loc}}(\Omega)\), we have
\begin{equation*}
\theta(z,D\Lambda(u),x)=\theta(z,Du,x),
\qquad \text{for \(|D\Lambda(u)|\)-a.e.}\ x\in\Omega,
\end{equation*}
where $\Lambda:\R\to\R$ is a non-decreasing locally Lipschitz function.
We remark that, if $\Lambda$ is an increasing function, then
\begin{equation}\label{radonnik}
\theta(z,D\Lambda(u),x)=\theta(z,Du,x),
\qquad \text{for \(|Du|\)-a.e.}\ x\in\Omega;
\end{equation}
this fact  was already noticed in \cite{LS, GMP} under some additional assumptions.  
Formula \eqref{radonnik} follows by the Chain rule formula (see \cite[Theorem 3.99]{AFP}) observing that 
$$
D^d \Lambda(u) = \Lambda^\prime(\widetilde u) D^d u,
$$
and, as $\Lambda(u)^\pm = \Lambda(u^\pm)$
$$ 
D^j \Lambda(u) = (\Lambda(u^+) - \Lambda(u^-))\, \nu \mathcal H^{N-1}\res J_u\,.
$$

\medskip

The outward normal unit vector $\nu(x)$ is defined for $\mathcal H^{N-1}$-almost every $x\in\partial\Omega$. It follows from Anzellotti's theory that every $z \in \mathcal{DM}^{\infty}(\Omega)$ has
a weak trace on $\partial \Omega$ of the
normal component of  $z$ which is denoted by
$[z, \nu]$. Moreover, it satisfies
\begin{equation}\label{des1}
\|[z,\nu]\|_{L^\infty(\partial\Omega)}\le \|z\|_{\infty}\,.
\end{equation}
We explicitly point out that if $z \in \mathcal{DM}^{\infty}(\Omega)$ and $v\in BV(\Omega)\cap L^\infty(\Omega)$, then
\begin{equation}\label{des2}
v[z,\nu]=[vz,\nu]
\end{equation}
holds (see \cite[Lemma 5.6]{C} or \cite[Proposition 2]{ADS}).

The following generalized  Green type  formula for merely local vector fields $z$ also holds true (\cite{DGS}).

\begin{Proposition}\label{poiu}
	Let $z \in \mathcal{DM}_{\rm{loc}}^{\infty}(\Omega)$ and set $\mu=\Div z$. Let $v\in BV(\Omega)\cap L^\infty(\Omega)$ be such that $v^*\in L^1(\Omega,\mu)$.
	Then $vz\in \mathcal{DM}^{\infty}(\Omega)$ and the following  holds: 
	\begin{equation}\label{GreenIII}
	\int_{\Omega} v^* \, d\mu + \int_{\Omega} (z, Dv) =
	\int_{\partial \Omega} [vz, \nu] \ d\mathcal H^{N-1}\,.
	\end{equation}
\end{Proposition}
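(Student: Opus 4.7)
The plan is to first upgrade the local product rule of Proposition \ref{nuova} to a global identity of Radon measures on $\Omega$, concluding that $vz$ belongs to $\mathcal{DM}^\infty(\Omega)$, and then to apply the classical Anzellotti Green formula to the globally divergence-measure field $vz$ with the constant test function $\varphi \equiv 1$.

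For the first step, Proposition \ref{nuova} gives, on every $\omega \subset\subset \Omega$, the identity of measures
\[
\operatorname{div}(vz) = v^*\mu + (z, Dv).
\]
The hypothesis $v^* \in L^1(\Omega, \mu)$ ensures that $v^*\mu$ is a signed Radon measure on $\Omega$ with $|v^*\mu|(\Omega) \leq \int_\Omega |v^*|\, d|\mu| < \infty$, while the absolute continuity of $(z, Dv)$ with respect to $|Dv|$ with density bounded by $\|z\|_\infty$, combined with $v \in BV(\Omega)$, yields $|(z, Dv)|(\Omega) \leq \|z\|_\infty |Dv|(\Omega) < \infty$. Since $\operatorname{div}(vz)$ agrees on every precompact subset with the right-hand side, and the latter is a globally finite Radon measure on $\Omega$, the identity extends to all of $\Omega$ and $vz \in \mathcal{DM}^\infty(\Omega)$ follows.

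For the second step, since $vz \in \mathcal{DM}^\infty(\Omega)$, the weak normal trace $[vz, \nu] \in L^\infty(\partial\Omega)$ is well-defined in the Anzellotti sense and the standard Green formula applied to the constant function $\varphi \equiv 1 \in BV(\Omega) \cap L^\infty(\Omega)$ reads
\[
\int_\Omega d\operatorname{div}(vz) = \int_{\partial\Omega} [vz, \nu]\, d\mathcal H^{N-1}.
\]
Substituting the product rule from the first step on the left-hand side produces exactly \eqref{GreenIII}.

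The main technical obstacle is precisely the passage from the local product rule to its global version: since a priori $\mu = \operatorname{div} z$ is only locally finite on $\Omega$, the meaning of the integral $\int_\Omega v^*\, d\mu$ must be carefully justified, and this is where the integrability hypothesis $v^* \in L^1(\Omega, \mu)$ is essential. An alternative route, perhaps more hands-on but technically equivalent, would be to exhaust $\Omega$ by smooth subdomains $\omega_n \nearrow \Omega$, apply the Green formula on each $\omega_n$ (where $z|_{\omega_n} \in \mathcal{DM}^\infty(\omega_n)$) and pass to the limit by dominated convergence; the delicate point there is controlling the behavior of the normal trace on $\partial \omega_n$ in the limit, which is why the product-rule approach above is more transparent.
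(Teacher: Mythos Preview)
The paper does not give its own proof of this proposition; it is stated with a reference to \cite{DGS}. Your argument is correct and is the natural one: Proposition~\ref{nuova} yields the identity $\operatorname{div}(vz)=v^*\mu+(z,Dv)$ locally, the hypotheses $v^*\in L^1(\Omega,\mu)$ and $v\in BV(\Omega)$ (together with the bound $|(z,Dv)|\le\|z\|_\infty|Dv|$) force the right-hand side to be a Radon measure of finite total variation on all of $\Omega$, so $vz\in\mathcal{DM}^\infty(\Omega)$, and the Gauss--Green formula for $vz$ against $\varphi\equiv 1$ gives \eqref{GreenIII}.
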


Analogously to \eqref{des1}, it can be proved that, for $z\in \DM_{\rm{loc}}(\Omega)$ such that the product $vz\in \DM(\Omega)$ for some $v\in BV(\Omega)\cap L^\infty(\Omega)$,
$$|[vz,\nu]|\le |v_{\res {\partial\Omega}}|\,\|z\|_{\infty}\,\quad\mathcal H^{N-1}\hbox{-a.e. on }\partial\Omega\,.$$

\medskip 

We will finally use the symbol   $\mathcal{S}_p$ to denote the best constant in the Sobolev inequality ($1\leq p< N$), that  is
$$||v||_{L^{p^*}(\Omega)} \le \mathcal{S}_p ||v||_{W^{1,p}_0(\Omega)}, \ \ \forall v \in W^{1,p}_0(\Omega),$$
where $p^*=\frac{Np}{N-p}$. Recall   that 
$$\displaystyle \lim_{p\to 1^{+}} \mathcal{S}_p = \mathcal{S}_1=(N\omega_{N}^{\frac{1}{N}})^{-1}\,,$$
where   	$\omega_{N}$ is the volume of the unit sphere of $\rn$ (see for instance \cite{talenti}).

\subsection{Notations}

 In our arguments we will use several truncating functions. In particular,  for a fixed $k>0$,  we introduce $T_{k}$ and $G_{k}$ as the function defined by
$$
T_k(s)=\max (-k,\min (s,k)), 
$$
and
$$
G_k(s)=(|s|-k)^+ \operatorname{sign}(s). 
$$
Note in particular that   $T_k(s) + G_k(s)=s$, for any $s\in \mathbb{R}$. 
\medskip

If no otherwise specified, we denote by $C$ several constants whose value may change from line to line and, sometimes, on the same line. These values only depend on the data but they do not depend on the indexes of the sequences. We underline the use of the standard convention to do not relabel an extracted compact subsequence.

\section{Main assumptions and core results}
\setcounter{equation}{0}
A primary  aim of this paper is to deal with the following problem
 \begin{equation} \label{pb1lap}
	 \begin{cases}
	 \displaystyle -\Delta_{1} u = h(u)f &  \text{in}\, \Omega, \\
	 u=0 & \text{on}\ \partial \Omega,			
	 \end{cases}
\end{equation}
where $f\in L^{N}(\Omega)$ is positive and $ \Delta_{1} u=\operatorname{div}\left(\frac{D u}{|D u|}\right)$ is the $1$-Laplace operator. As we already mentioned, the general case of a nonnegative $f\in L^{N,\infty}(\Omega)$ will be studied in Sections \ref{sec:fnonnegativa} and \ref{lore}.

On the nonlinearity $h:[0,\infty)\to [0,\infty]$ we assume that it is continuous and finite outside the origin, $h(0)\not =0$, \begin{equation}\label{h1}\tag{h1}
	 	 \displaystyle \exists\;{c_1},\gamma,k_0>0\;\ \text{such that}\;\  h(s)\le \frac{c_1}{s^\gamma} \ \ \text{if} \ \ s\leq k_0,
\end{equation}
 and
 \begin{equation}\label{h2}\tag{h2}
		 \lim_{s\to \infty} h(s):=h(\infty)<\infty\,.
\end{equation}
Let us note that the function $h$ is not necessarily blowing up at the origin so that a bounded continuous function is an admissible choice. 	 
\\We start providing the definition of solution to problem \eqref{pb1lap}.
\begin{defin}\label{weakdef}
 Let $0<f\in L^N(\Omega)$ then a function $u\in BV_{\rm{loc}}(\Omega)\cap L^\infty(\Omega)$ is a solution to problem \eqref{pb1lap} if there exists $z\in \mathcal{D}\mathcal{M}^\infty(\Omega)$ with $||z||_{\infty}\le 1$ such that
	\begin{align}
	& h(u)f\in L^{1}_{\rm{loc}}(\Omega), \label{def_l1loc} 
	\\
	&-\operatorname{div}z =h(u)f \ \ \ \text{in  } \mathcal{D'}(\Omega), \label{def_eqdistr}
	\\
	&(z,Du)=|Du| \label{def_campo} \ \ \ \ \text{as measures in } \Omega,
	\\
	& \text{and one of the following conditions holds:} \notag \\
	&\lim_{\epsilon\to 0}  \fint_{\Omega\cap B(x,\epsilon)} u (y) dy = 0 \ \ \ \text{or} \ \ \ [z,\nu] (x)= -1 \label{def_bordo}\ \ \ \text{for  $\mathcal{H}^{N-1}$-a.e. } x \in \partial\Omega.				\end{align}	
\end{defin}	
\begin{remark} 
Notice that Definition \ref{weakdef} does not depend explicitly on the parameter $\gamma$ in \eqref{h1},  this fact suggesting that an extension to more general  nonlinearities is allowed (see Section \ref{more} below). 
Furthermore, condition \eqref{def_bordo} is a way to give meaning to the homogeneous Dirichlet boundary datum. It is well known that $BV$ solutions to problems involving the $1$-Laplace operator do not necessarily assume the boundary datum pointwise.  A standard weaker request in this framework is that a solution $u\in BV(\Omega)$ satisfies
\begin{equation}\label{bordodcgs}		
	u(1+[z,\nu])(x)= 0 \ \ \ \text{for  $\mathcal{H}^{N-1}$-a.e. } x \in \partial\Omega\,,
\end{equation}
i.e., for $\mathcal{H}^{N-1}$-a.e. $x\in \partial\Omega$, it holds that $u(x)=0$ or $[z,\nu](x)=-1$ (see for instance \cite{MST1,DGS}).  It is not clear whether problem \eqref{pb1lap}  admits, in general, finite energy solutions, that is solutions that are  $BV$ up to the boundary of $\Omega$.  This fact leads to impose \eqref{def_bordo} which is a weaker assumption than \eqref{bordodcgs}  for nonnegative functions (see for instance \cite[Theorem $3.87$]{AFP}). A similar  argument was already exploited  in \cite{OP} when dealing with infinite energy solutions to similar problems involving laplacian type operators. We refer to Section \ref{energiafinita} for further instances of  solutions belonging to the natural energy space $BV(\Omega)$ and how this fact can be related to the smoothness of the domain $\Omega$. 
\end{remark}

Here is  our main existence result in the case of a positive datum $f$:  		
\begin{theorem}\label{existence}
	Let $0<f\in L^{N}(\Omega)$ such that $\displaystyle ||f||_{L^N(\Omega)}< \frac{1}{\mathcal{S}_1 h(\infty)}$, where $h$  satisfies \eqref{h1} and \eqref{h2}. Then there exists a solution $u$ to problem \eqref{pb1lap} in the sense of Definition \ref{weakdef}.
\end{theorem}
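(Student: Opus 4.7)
The plan is to obtain $u$ as a limit, as $p\to 1^+$, of solutions $u_p$ to the approximating $p$-Laplacian problems
\[
-\Delta_p u_p = h(u_p)\,f \quad \text{in } \Omega, \qquad u_p=0 \quad \text{on } \partial\Omega,
\]
whose solvability under assumptions \eqref{h1}--\eqref{h2} is granted by Section \ref{carrie}. The crucial preliminary is to produce a priori estimates on $u_p$ that are uniform in $p$. For the $L^\infty$ bound I would run Stampacchia's iteration: testing with $G_k(u_p)$, using $h(u_p)\le h(\infty)+\varepsilon$ on $\{u_p>k\}$ for large $k$, and coupling Sobolev with H\"older in the form $\|G_k(u_p)\|_{L^{N/(N-1)}}\le \|G_k(u_p)\|_{L^{p^*}} |\{u_p>k\}|^{(p-1)/p}$, the smallness $\|f\|_{L^N}<(\mathcal{S}_1 h(\infty))^{-1}$ together with $\mathcal{S}_p\to\mathcal{S}_1$ absorbs the Sobolev term and a standard super-level-set iteration yields $\|u_p\|_\infty\le M$ independent of $p$. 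A localized version of the same test, paired with Young's inequality $|\nabla u_p|\le \tfrac{1}{p}|\nabla u_p|^p+\tfrac{1}{p'}$, then delivers a uniform $BV_{\mathrm{loc}}(\Omega)$ estimate on $\{u_p\}$. Up to a subsequence, $u_p\to u$ a.e.\ and in $L^1_{\mathrm{loc}}(\Omega)$, with $u\in BV_{\mathrm{loc}}(\Omega)\cap L^\infty(\Omega)$; the vector field $z_p:=|\nabla u_p|^{p-2}\nabla u_p$ satisfies $\|z_p\|_{L^{p'}}^{p'}=\int_\Omega|\nabla u_p|^p$, is uniformly bounded, and converges weakly-$\ast$ in $L^\infty(\Omega,\R^N)$ to some $z$ with $\|z\|_\infty\le 1$.

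The next step is the passage to the limit in the equation and in the Anzellotti pairing. The distributional equation $-\Div z_p = h(u_p)f$ passes to $-\Div z=h(u)f$: the right-hand side converges in $L^1_{\mathrm{loc}}$ thanks to the uniform $L^\infty$ control on $u_p$ and to the fact that, when $h$ is singular at the origin, the limit satisfies $u>0$ a.e.\ in $\Omega$, so that $h(u_p)f$ cannot blow up in the limit. To show $(z,Du)=|Du|$ as measures, I would start from the pointwise identity $\varphi\,|\nabla u_p|^p=\varphi\,z_p\cdot\nabla u_p$ for $\varphi\in C_c^1(\Omega)$, $\varphi\ge 0$; integrating by parts against $u_p\in W^{1,p}_0(\Omega)\cap L^\infty(\Omega)$ identifies the right-hand side with $\langle(z_p,Du_p),\varphi\rangle$, which tends to $\langle(z,Du),\varphi\rangle$ by weak-$\ast$ convergence of $z_p$ and $L^1_{\mathrm{loc}}$ convergence of $h(u_p)f$. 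Combining this with Young's inequality $|\nabla u_p|\le \tfrac{1}{p}|\nabla u_p|^p + \tfrac{1}{p'}$ and the lower semicontinuity $\int\varphi\,|Du|\le\liminf\int\varphi\,|\nabla u_p|$ yields $\int\varphi\,|Du|\le\langle(z,Du),\varphi\rangle$ in the limit; the reverse inequality is immediate from $\|z\|_\infty\le 1$.

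For the boundary condition \eqref{def_bordo}, since each $u_p\in W^{1,p}_0(\Omega)$, extending $u_p$ by zero across $\partial\Omega$ keeps the extension in $W^{1,p}$ and inherits the uniform $BV_{\mathrm{loc}}$ bound on a slightly enlarged domain; passing to the limit in Green's formula \eqref{GreenIII} there, and invoking the already established identity $(z,Du)=|Du|$, produces the pointwise dichotomy $u^*(x)=0$ or $[z,\nu](x)=-1$ at $\mathcal H^{N-1}$-a.e.\ $x\in\partial\Omega$, which yields \eqref{def_bordo}. \emph{The main obstacle} is the identification of the pairing at the measure level: the jump part of $(z,Du)$ has to be treated separately, relying on Proposition 4.5(iii) of \cite{CDC} and the invariance \eqref{radonnik} of the Radon--Nikodym density under increasing compositions to make the argument independent of the nonlinearity $h$. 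An additional subtlety is that the estimates are only local, so global $BV(\Omega)$ regularity is unavailable and the Dirichlet datum must be encoded via the weak dichotomy \eqref{def_bordo} in place of the stronger \eqref{bordodcgs}.
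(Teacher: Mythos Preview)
Your overall strategy (approximate by $p$-Laplacian, extract $z$ as weak-$*$ limit of $|\nabla u_p|^{p-2}\nabla u_p$, pass to the limit in the equation, identify the pairing, recover the boundary datum) matches the paper's. However, there is a genuine gap that breaks two of your steps.

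You repeatedly assume $u_p\in W^{1,p}_0(\Omega)\cap L^\infty(\Omega)$. Under \eqref{h1} with $\gamma>1$ this is \emph{false}: the solutions produced in Section \ref{carrie} are only in $W^{1,p}_{\rm loc}(\Omega)$, while the function that does sit in $W^{1,p}_0(\Omega)$ is the power $u_p^{\frac{\sigma-1+p}{p}}$ (this is \eqref{exbordo}). Consequently your boundary argument --- ``extend $u_p$ by zero across $\partial\Omega$ and keep the extension in $W^{1,p}$'' --- is not available, and you cannot run Green's formula against $u$ itself to obtain \eqref{def_bordo}. The paper's route is to work throughout with $u^\sigma$, which \emph{is} in $BV(\Omega)$: one first proves the variational identity $-(u^\sigma)^*\operatorname{div}z=h(u)fu^\sigma$ (Lemma \ref{lemma_campoz}), then derives $u^\sigma(1+[z,\nu])=0$ on $\partial\Omega$ via \eqref{dispotenza} and \eqref{GreenIII} (Lemma \ref{lemmabordo}), and finally reads off \eqref{def_bordo} from this.

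The same issue affects your identification of the pairing. Testing with $u_p\varphi$ and passing to the limit as you propose would yield $(z,Du)=|Du|$ directly only if one can control the ``near-zero'' contribution $\int_{\{u_p\le\delta\}}h(u_p)fu_p\varphi$; the paper instead tests with $u_p^{\frac{\sigma-1+p}{p}}\varphi$, obtains $(z,Du^\sigma)=|Du^\sigma|$ first, and \emph{then} invokes \eqref{radonnik} to transfer this to $(z,Du)=|Du|$. So the role of \eqref{radonnik} is not to ``treat the jump part separately'' as you suggest, but to pass from the power $u^\sigma$ (for which global estimates are available) back to $u$ (for which they are not). Finally, the $L^\infty$ bound in the paper is not obtained by Stampacchia iteration but by the simpler observation that \eqref{exubounded} forces $\|G_k(u_p)\|_{L^{N/(N-1)}}\le C\frac{p-1}{p}\to 0$, whence $G_k(u)=0$ by Fatou.
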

In the following result we collect some further qualitative properties enjoyed by the solution we found in Theorem \ref{existence}.  Here and below, in order to unify the presentation, for $\gamma>0$ we set 
\begin{equation}\label{sigma}
\sigma:=\max(1,\gamma)\,.
\end{equation}

\begin{theorem}\label{regularity}
	Let $0<f\in L^{N}(\Omega)$ such that $\displaystyle ||f||_{L^N(\Omega)}< \frac{1}{\mathcal{S}_1 h(\infty)}$, where $h$  satisfies \eqref{h1} and \eqref{h2}. Then the solution $u$ to problem \eqref{pb1lap} found in Theorem \ref{existence} is such that
\begin{equation}\label{eqregul}
		u^{\sigma} \in BV(\Omega). 
\end{equation}
Moreover $\operatorname{div}z \in L^1(\Omega)$ and it holds 
\begin{equation}\label{formtestestese}
	-\int_{\Omega}v \operatorname{div}z = \int_\Omega h(u)fv, \ \ \forall v \in BV(\Omega)\cap L^\infty(\Omega).
\end{equation}
\bk
Finally if $h(0)= \infty$  then $u>0$ a.e. in $\Omega$ while, if $h\in L^\infty([0,\infty))$ and  $||f||_{L^N(\Omega)}< (\mathcal{S}_1 ||h||_{L^\infty([0,\infty))})^{-1}$,  then $u\equiv 0$ a.e. in $\Omega$.
\end{theorem}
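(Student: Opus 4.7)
The approach is to derive all five assertions from the $p$-Laplacian approximation $u_p\in W^{1,p}_0(\Omega)$ solving \eqref{pbplapintro}, exploiting the uniform $L^\infty$ bound and the uniform $L^1$ control on $h(u_p)f$ that underlie the proof of Theorem \ref{existence}. The first and main step is the $BV$ regularity of $u^\sigma$: test the $p$-equation with the admissible function $\varphi=u_p^{p(\sigma-1)+1}\in W^{1,p}_0(\Omega)$ to obtain
\[\bigl(p(\sigma-1)+1\bigr)\int_{\Omega}u_p^{p(\sigma-1)}|\nabla u_p|^p\,dx=\int_{\Omega}u_p^{p(\sigma-1)+1}\,h(u_p)\,f\,dx.\]
The choice $\sigma=\max(1,\gamma)$ makes the effective exponent $p(\sigma-1)+1-\gamma$ nonnegative for every $p\ge 1$, so by \eqref{h1} the integrand on the right is controlled near $\{u_p=0\}$; together with the uniform $L^\infty$ bound on $u_p$ and the condition $h(\infty)<\infty$, the right-hand side is uniformly bounded as $p\to 1^+$. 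Since $|\nabla u_p^\sigma|^p=\sigma^p u_p^{p(\sigma-1)}|\nabla u_p|^p$, H\"older's inequality then yields a uniform bound on $\int_{\Omega}|\nabla u_p^\sigma|\,dx$; dominated convergence (using the $L^\infty$ bound) and lower semicontinuity of the $BV$-norm provide $u^\sigma\in BV(\Omega)$, with vanishing trace on $\partial\Omega$ inherited from the extension by zero of each $u_p^\sigma\in W^{1,p}_0(\Omega)$.

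The $L^1$-integrability of $\operatorname{div} z$ is essentially automatic: since $z\in\mathcal{D}\mathcal{M}^\infty(\Omega)$, $\operatorname{div} z$ is a finite Radon measure, and \eqref{def_eqdistr} together with \eqref{def_l1loc} identify its density with the nonnegative locally integrable function $-h(u)f$, which must then belong to $L^1(\Omega)$. Identity \eqref{formtestestese} is the integration of the pointwise equality $-\operatorname{div} z=h(u)f$ against any $v\in BV(\Omega)\cap L^\infty(\Omega)$.

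The dichotomy on the positivity of $u$ is handled as follows. If $h(0)=\infty$ and $|\{u=0\}|>0$ then $h(u)=+\infty$ on this set while $f>0$ a.e., contradicting $h(u)f\in L^1(\Omega)$; hence $u>0$ a.e. Conversely, if $h$ is bounded on $[0,\infty)$, the exponent $\gamma>0$ in \eqref{h1} may be taken arbitrarily small, so $\sigma=1$ and $u\in BV(\Omega)$ by the first step. The weak boundary condition \eqref{def_bordo}, combined with the a.e.\ convergence of spherical averages to the trace for $BV$ functions, forces $u_{|\partial\Omega}(1+[z,\nu])=0$ $\mathcal{H}^{N-1}$-a.e.\ on $\partial\Omega$. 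Applying Proposition \ref{poiu} with $v=u$ and using \eqref{def_campo} and \eqref{des2} gives
\[\int_{\Omega}|Du|+\int_{\partial\Omega}u\,d\mathcal{H}^{N-1}=\int_{\Omega}u\,h(u)\,f\,dx\le\mathcal{S}_1\|h\|_{\infty}\|f\|_N\left(\int_{\Omega}|Du|+\int_{\partial\Omega}u\,d\mathcal{H}^{N-1}\right),\]
the last estimate being H\"older's inequality together with the $BV$-Sobolev inequality in the boundary-inclusive norm. The strict inequality in the hypothesis then forces $\|u\|_{BV(\Omega)}=0$, i.e. $u\equiv 0$.

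The main obstacle is the first step: one must pass from the $W^{1,p}_0$ uniform bounds on $u_p^\sigma$ to a global $BV$ statement for $u^\sigma$ with vanishing trace, and verify that the exponent $p(\sigma-1)+1$ absorbs the singular bound \eqref{h1} uniformly as $p\to 1^+$. This rests on the a.e.\ convergence $u_p\to u$ throughout $\Omega$ (notably on $\{u=0\}$ in the strongly singular case $\gamma>1$) and on a clean transfer of Green's formula to the $BV$-limit; once those are in place, the remaining ingredients amount to standard $BV$-calculus.
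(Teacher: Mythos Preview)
Your argument for the last assertion (the bounded-$h$ case) is correct and genuinely different from the paper's. The paper proceeds by comparison: it solves $-\Delta_p w_p=\|h\|_{L^\infty}f$, invokes \cite{CT} to get $w_p\to 0$, and then shows $0\le u_p\le w_p$ via the monotonicity of the $p$-Laplacian. Your route---use $u\in BV(\Omega)$ (legitimate here since $\sigma=1$), combine the boundary alternative \eqref{def_bordo} with the $BV$ trace theorem to get $u(1+[z,\nu])=0$ on $\partial\Omega$, apply Green's formula and the $BV$--Sobolev inequality---is more direct and stays entirely at the $p=1$ level. Both lead to the same conclusion; yours avoids the external reference but needs the full $BV$ regularity of $u$, which is only free when $\sigma=1$.

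There is, however, a genuine gap in your first step. You invoke a \emph{uniform} $L^\infty$ bound on the approximants $u_p$ ``that underlie the proof of Theorem~\ref{existence}'', but no such bound is available: the paper only shows $u\in L^\infty(\Omega)$, obtained \emph{a posteriori} by passing to the limit in \eqref{exubounded} (which gives $\|G_k(u_p)\|_{L^{N/(N-1)}}\le C(p-1)$, not an $L^\infty$ control on $u_p$). Without it you cannot bound $\int_{\{u_p>k_0\}} u_p^{p(\sigma-1)+1}h(u_p)f$ uniformly in $p$ as you claim. A second issue is that in the strongly singular case $\gamma>1$ the function $u_p$ is only in $W^{1,p}_{\rm loc}(\Omega)$, so $u_p^{p(\sigma-1)+1}$ is not a priori an admissible global test function. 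The paper handles both points simultaneously: it works at the double-approximation level $u_n\in W^{1,p}_0(\Omega)$, tests with $u_n^\sigma$ (yielding $\nabla u_n^{(\sigma-1+p)/p}$ rather than $\nabla u_n^\sigma$), and controls the tail term $\int_{\{u_n\ge k_1\}}fu_n^\sigma$ not via an $L^\infty$ bound but by H\"older--Sobolev absorption into the left-hand side, using $\frac{\sigma N}{N-1}<\frac{(\sigma-1+p)p^*}{p}$ and the smallness hypothesis on $\|f\|_{L^N}$ (see the derivation of \eqref{exbordo} in Lemma~\ref{lemmapfisso}). Your test function would in fact allow the same absorption trick (one checks $(p(\sigma-1)+1)\tfrac{N}{N-1}<\sigma p^*$), so the strategy is salvageable, but as written the justification is incorrect.

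The remaining parts---$\operatorname{div}z\in L^1(\Omega)$ from $z\in\mathcal{DM}^\infty(\Omega)$ and the distributional identity, and $u>0$ from $h(u)f\in L^1_{\rm loc}$ with $h(0)=\infty$ and $f>0$---match the paper's reasoning.
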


If  $h$ is decreasing, then the solutions  obtained  in the previous theorems are unique in the sense specified  by the following theorem.

\begin{theorem}\label{uniqueness}
Let $h$ be a decreasing function. Then, under the assumptions of Theorem \ref{existence},  there is only one solution $u$ to problem  \eqref{pb1lap} in the sense of Definition \ref{weakdef} such that $u^{\sigma}\in BV(\Omega)$.			
\end{theorem}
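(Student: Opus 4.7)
The strategy is a symmetric cross-testing argument between the two equations, made rigorous through the $L^\infty$--divergence--measure machinery of Section \ref{sec2}. Let $u_1,u_2$ be two solutions in the sense of Definition \ref{weakdef} satisfying the $BV$-regularity $u_i^\sigma\in BV(\Omega)$, with associated vector fields $z_1,z_2\in\mathcal{DM}^\infty(\Omega)$. By Theorem \ref{regularity}, $\operatorname{div}z_i\in L^1(\Omega)$ and formula \eqref{formtestestese} is available. The plan is to apply it (equivalently, Proposition \ref{poiu}) to the equation for $u_i$ with test function $v=u_j^\sigma\in BV(\Omega)\cap L^\infty(\Omega)$ for all four pairs $(i,j)\in\{1,2\}^2$, obtaining
\begin{equation*}
\int_\Omega h(u_i)\,f\,u_j^\sigma \;=\; \int_\Omega (z_i,Du_j^\sigma)\;-\;\int_{\partial\Omega}[u_j^\sigma z_i,\nu]\,d\mathcal H^{N-1}.
\end{equation*}

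Next I would exploit the defining equality $(z_i,Du_i)=|Du_i|$ together with \eqref{radonnik} applied to the increasing map $\Lambda(s)=s^\sigma$ to deduce $(z_i,Du_i^\sigma)=|Du_i^\sigma|$ as Radon measures on $\Omega$; for the cross terms $i\neq j$ only the bound $(z_i,Du_j^\sigma)\le|Du_j^\sigma|$ is available since $\|z_i\|_\infty\le 1$. Forming the combination $X_{11}+X_{22}-X_{12}-X_{21}$ with $X_{ij}:=\int_\Omega h(u_i)f u_j^\sigma$, the bulk total-variation integrals $\int|Du_i^\sigma|$ cancel, and, after expanding the weak normal traces via \eqref{des2}, one is left with
\begin{equation*}
\int_\Omega\bigl(h(u_1)-h(u_2)\bigr)\bigl(u_1^\sigma-u_2^\sigma\bigr)f \;\ge\; \int_{\partial\Omega}(u_1^\sigma-u_2^\sigma)\bigl([z_2,\nu]-[z_1,\nu]\bigr)\,d\mathcal H^{N-1}.
\end{equation*}

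The decisive and most delicate step is to verify that the boundary integral on the right is nonnegative. The argument is a case analysis based on \eqref{def_bordo}: at $\mathcal H^{N-1}$-a.e.\ $x\in\partial\Omega$, for each index $i$, either $\lim_{\varepsilon\to 0}\fint_{\Omega\cap B(x,\varepsilon)}u_i\,dy=0$ or $[z_i,\nu](x)=-1$. Since $\sigma\ge 1$ and $u_i\in L^\infty(\Omega)$, Jensen's inequality (or the direct bound $u_i^\sigma\le\|u_i\|_\infty^{\sigma-1}u_i$) shows that the first alternative transfers to $u_i^\sigma$, i.e.\ the $BV$-trace of $u_i^\sigma$ at $x$ vanishes; meanwhile $|[z_k,\nu]|\le 1$ by \eqref{des1}. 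Distinguishing the four possible combinations -- both traces vanish, exactly one vanishes, or both are positive (in which case $[z_1,\nu]=[z_2,\nu]=-1$) -- one checks directly that the integrand is nonnegative pointwise.

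To conclude, since $h$ is strictly decreasing on $[0,\infty)$ and $s\mapsto s^\sigma$ is strictly increasing there, the product $\bigl(h(u_1)-h(u_2)\bigr)\bigl(u_1^\sigma-u_2^\sigma\bigr)$ is nonpositive a.e.\ in $\Omega$. Combined with the strict positivity of $f$, the displayed inequality forces it to vanish almost everywhere, whence $u_1=u_2$ a.e. The main obstacle in the whole argument is the boundary analysis, which requires carefully translating the integral mean condition in \eqref{def_bordo} into a $BV$-trace statement for the composed function $u_i^\sigma$ so that the alternatives provided by the weak boundary datum can be combined across the two unknown solutions.
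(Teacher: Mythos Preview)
Your argument is essentially the paper's own proof: extend the distributional equation to $BV\cap L^\infty$ test functions, apply the Gauss--Green formula \eqref{GreenIII}, cross-test with $u_j^\sigma$, use \eqref{radonnik} with $\Lambda(s)=s^\sigma$ to turn $(z_i,Du_i)=|Du_i|$ into $(z_i,Du_i^\sigma)=|Du_i^\sigma|$, and then perform the pointwise boundary case analysis you describe (the paper organises this as the identity $u_i^\sigma(1+[z_i,\nu])=0$ $\mathcal H^{N-1}$-a.e.\ on $\partial\Omega$, which is exactly your observation that the vanishing of the integral mean of $u_i$ forces the $BV$-trace of $u_i^\sigma$ to vanish). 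The only slip is that you invoke Theorem \ref{regularity} to get $\operatorname{div}z_i\in L^1(\Omega)$ and \eqref{formtestestese}: that theorem is stated for the \emph{particular} solution constructed in Theorem \ref{existence}, not for an arbitrary $u_i$ as in Definition \ref{weakdef}. The correct reference is Lemma \ref{lemmal1}, whose hypotheses are met by any solution via \eqref{def_l1loc}--\eqref{def_eqdistr}; this is precisely how the paper opens its proof.
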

\begin{remark}\label{bv} We stress  that Theorem \ref{existence} is new also in the case of a bounded nonlinearity $h$. 
A  remark is in order concerning \eqref{eqregul}. 
First of all, if $\gamma\leq 1$ then \eqref{eqregul} says that solutions always belong to the natural space of functions with finite  $BV$ norm, accordingly  with the case $p>1$ where solutions with finite $W^{1,p}_0$\-energy are always achieved when the datum $f$ belongs to  $L^{(p^*)'}(\Omega)$ (\cite{BO, CST, DCA, GMM2}). 
If $\gamma>1$ then \eqref{eqregul} is a sort of counterpart, as $p$ tends to $1^{+}$,  of the regularity for the solutions to the Dirichlet problem associated to $-\Delta_{p} v = fv^{-\gamma}$. In \cite{BO, DCA, GMM} it is shown that, although $v$ has no finite energy,  in general  $v^{\frac{\gamma-1+p}{p}}$ does, and this  also gives a  (weak) sense to the  boundary datum.   Again, we refer to Section \ref{finite} for more comments  on this and on how the belonging of $u$ to  $BV(\Omega)$ can depend on both the degeneracy of the datum $f$ and  the geometry  of $\partial\Omega$. 

\medskip 

We finally emphasize the regularizing effect given by the, possibly singular, nonlinear term $h$. If $h(0)= \infty$ then $u>0$ a.e. in $\Omega$;  this is a striking difference with the bounded case; as a consequence of a result in \cite{CT}, in fact, if $h(s)\equiv 1$ and  $\displaystyle ||f||_{L^N(\Omega)}< {\mathcal{S}_1}^{-1}$, then $u=0$. The extension of this property to the case of a general bounded nonlinearity is given by the last assertion of Theorem \ref{regularity}.

As expected, also the behavior at infinity of the nonlinearity $h$ plays a role; in fact, if $h(\infty)=0$ then no smallness assumptions need to be imposed on the data in contrast with the {\it linear} right hand side case (\cite{CT,K,KS,MST1}).

\end{remark}
\section{Existence of a solution for $p>1$ and for a nonnegative $f\in L^{(p^*)'}(\Omega)$}\label{carrie}
 Here we set the theory in the case of a $p$-laplace principal part for a fixed $1<p<N$.  This case  represents the basis of  the approximation scheme we will use to prove Theorem \ref{existence}. Existence of solutions (and  uniqueness when expected) for this kind of problems has its own interest and we will present it in full generality.  Let us consider
\begin{equation}\label{pbplap}
	\begin{cases}
	\displaystyle - \Delta_p u_p= h(u_p)f &  \text{in}\, \Omega, \\
	u_p=0 & \text{on}\ \partial \Omega,
	\end{cases}
\end{equation}	
where $f\in L^{(p^*)'}(\Omega)$ is a nonnegative function and $h$ satisfies both \eqref{h1} and \eqref{h2}. We precise the notion of solution to problem  \eqref{pbplap} we adopt.
\begin{defin}\label{distributional}
	A nonnegative function $u_p\in W^{1,p}_{\rm{loc}}(\Omega)$ is a \emph{distributional solution} to problem \eqref{pbplap} if 

\begin{equation}
	h(u_p)f\in L^{1}_{\rm{loc}}(\Omega), \label{pl1loc}
\end{equation}

	\begin{equation}
	 G_{k}(u_{p})\in W^{1,p}_{0}(\Omega) \ \ \ \text{for all}\ k>0\,,\label{pbordodef}
	\end{equation}
and
	\begin{equation}\int_{\Omega}|\nabla u_p|^{p-2} \nabla u_p\cdot \nabla \varphi = \int_{\Omega}h(u_p)f\varphi, \label{pweakdef}
	\end{equation}
for every $\varphi \in C^{1}_{c}(\Omega)$.	
\end{defin}

\begin{remark}
 Notice that condition \eqref{pbordodef} is the same used in \cite{CST} for $p>{1}$ in the model case $h(s)=s^{-\gamma}$ in order to give sense to the boundary datum and to ensure uniqueness under suitable assumptions on both the datum and/or the domain. As a matter of fact, if $h$ is non-increasing, then a straightforward re-adaptation of the proof of Theorem $1.5$ in \cite{CST} shows that the same uniqueness property holds for problem  \eqref{pbplap}. In particular, if $\Omega$ star-shaped, one can show that uniqueness of distributional solutions in the sense of Definition \ref{distributional} holds if $f\in L^1(\Omega)$, while some regularity on $f$ is needed if $\gamma>1$ and the domain is more general.

	As we already mentioned, an alternative  way one can weakly intend the boundary datum in problem \eqref{pbplap} is by requesting that
\begin{equation}
u_{p}^{\frac{\sigma-1+p}{p}}\in W^{1,p}_{0}(\Omega)\,. \label{sciunzi} 	
\end{equation}
Property \eqref{sciunzi} is the same given in \cite{DCA},  it  is consistent with the case $p=2$ (\cite{BO}), and, as we said,  it has  its counterpart as $p\to1^{+}$ (i.e. $u^{\sigma}\in BV(\Omega)$).   
Let us observe that, if $f\in L^{N}(\Omega)$, then the solutions we will construct in Theorem \ref{existence_p} below enjoy \eqref{sciunzi} (see \eqref{exbordo} and Section \ref{mrd} below) that, by a  direct computation, can be shown to imply \eqref{pbordodef}. 
\\We finally  want to stress some striking differences with the model case $h(s)=s^{-\gamma}$, $\gamma>0$. In this case, in fact, the behavior of $h$ at infinity also plays a role and \eqref{sciunzi} can be shown to hold for any nonnegative  $f\in L^{1}(\Omega)$ if $\gamma\geq1$. If $\gamma<1$ then solutions in $W^{1,p}_{0}(\Omega)$ exist for a datum $f\in L^{(\frac{p^{\ast}}{1-\gamma})'}(\Omega)$ (see \cite{DCA}).  In this sense, as we do not assume any behavior for $h$ at infinity, our summability assumption on the datum $f$ can be considered to be optimal. See also Theorem \ref{more1} below for further regularity results depending on the summability of the datum $f$. 
	
\end{remark}

The following existence result holds. 
\begin{theorem}\label{existence_p}
Let $0\le f\in L^{(p^*)'}(\Omega)$ and let $h$ satisfy \eqref{h1} and \eqref{h2}. Then there exists a distributional solution $u_p$ to problem \eqref{pbplap}.
\end{theorem}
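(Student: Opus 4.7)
My plan is to construct $u_p$ as a limit of approximating problems with regularised data. I would set $h_n(s):=\min(h(s),n)$ (equivalently $h_n(s):=h(s+1/n)$ when $h(0)=+\infty$) and $f_n:=T_n(f)$, so that $h_n$ is continuous and bounded on $[0,\infty)$ while $f_n\in L^{\infty}(\Omega)$ still converges to $f$ in $L^{(p^*)'}(\Omega)$. The map $T\colon v\mapsto u$ defined by $-\Delta_p u=h_n(v^+)f_n$ with $u\in W^{1,p}_0(\Omega)\cap L^{\infty}(\Omega)$ is continuous on $L^{p^*}(\Omega)$ and, by Rellich--Kondrachov, compact; since the right-hand side is in $L^{\infty}(\Omega)$, Stampacchia's theorem provides a uniform $L^{\infty}$ bound that makes a sufficiently large ball in $L^{p^*}(\Omega)$ invariant. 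Schauder's fixed point theorem then produces a nonnegative $u_n\in W^{1,p}_0(\Omega)\cap L^{\infty}(\Omega)$ solving $-\Delta_p u_n=h_n(u_n)f_n$ in the weak sense.

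Next I would derive a priori estimates uniform in $n$. Testing with $G_k(u_n)$ for $k$ so large that $h(s)\le h(\infty)+\varepsilon$ on $[k,\infty)$ (which holds by \eqref{h2}) and using the Sobolev inequality yields
\[
\|\nabla G_k(u_n)\|_p^{p-1}\le (h(\infty)+\varepsilon)\,\mathcal{S}_p\,\|f\|_{(p^*)'},
\]
so that $G_k(u_n)$ is bounded in $W^{1,p}_0(\Omega)$ and, in particular, $u_n$ is bounded in $L^{p^*}(\Omega)$. To capture the behaviour near the singularity and the boundary datum \eqref{pbordodef}, I would then test with the Lipschitz function $\psi(u_n):=\int_0^{u_n}\min(t,k_0)^{\sigma-1}\,dt$, where $\sigma=\max(1,\gamma)$: using \eqref{h1} on $\{u_n\le k_0\}$ and the boundedness of $h$ on $[k_0,\infty)$, the right-hand side is controlled by $C+C\|u_n\|_{p^*}$, which gives a uniform bound on $u_n^{(\sigma-1+p)/p}$ in $W^{1,p}_0(\Omega)$; a direct chain-rule computation then shows that $G_k(u_n)$ remains bounded in $W^{1,p}_0(\Omega)$, yielding \eqref{pbordodef} in the limit.

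The hard part is to upgrade these estimates to a uniform bound on $u_n$ in $W^{1,p}_{\rm loc}(\Omega)$. When $\gamma\le 1$, choosing $u_n$ itself as test function and splitting the right-hand side into $\{u_n\le k_0\}$ and $\{u_n>k_0\}$ by means of \eqref{h1}--\eqref{h2} gives immediately $\|u_n\|_{W^{1,p}_0(\Omega)}\le C$. When $\gamma>1$ the chain rule gives
\[
|\nabla u_n|^p=C\,u_n^{1-\gamma}\,\bigl|\nabla \bigl(u_n^{(\gamma-1+p)/p}\bigr)\bigr|^p\qquad\text{on }\{u_n>0\},
\]
so a local $L^p$-bound on $\nabla u_n$ reduces to a uniform positive lower bound $u_n\ge c_\omega>0$ on each $\omega\subset\!\subset\Omega$. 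Since $h$ is not assumed monotone, the usual comparison with $u_1$ is not available; instead I would use $h(0)\ne 0$ and continuity to obtain $h_0>0$ and $\delta_0>0$ with $h\ge h_0$ on $[0,\delta_0]$, build a positive subsolution $\underline u\in W^{1,p}_0(\Omega)$ of $-\Delta_p\underline u=h_0\,T_1(f)\chi_\omega$ satisfying $\underline u\le \delta_0$ (possibly shrinking $\omega$), and argue by comparison on $\{\underline u>u_n\}\subset\{u_n\le \delta_0\}$, where $-\Delta_p u_n\ge h_0\,T_1(f)\chi_\omega=-\Delta_p\underline u$; strict monotonicity of $-\Delta_p$ together with the strong maximum principle then forces $u_n\ge \underline u\ge c_\omega>0$ on $\omega$, uniformly in $n$.

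Finally, I pass to the limit. Up to subsequences, $u_n\rightharpoonup u$ weakly in $W^{1,p}_{\rm loc}(\Omega)$, strongly in $L^q_{\rm loc}(\Omega)$ and almost everywhere, with $G_k(u)\in W^{1,p}_0(\Omega)$ by weak closedness. The local positivity of $u$ inherited from the previous step makes $h_n(u_n)f_n$ dominated locally by $\bigl(\sup_{[c_\omega,\infty)}h\bigr)f\in L^{(p^*)'}(\Omega)\subset L^1_{\rm loc}(\Omega)$, hence $h_n(u_n)f_n\to h(u)f$ in $L^1_{\rm loc}(\Omega)$ by dominated convergence. Strong convergence $\nabla u_n\to \nabla u$ almost everywhere then follows from the Boccardo--Murat argument, testing the equation for $u_n$ with $T_k(u_n-T_j(u))\varphi$ and exploiting the strict monotonicity of $\xi\mapsto |\xi|^{p-2}\xi$; this yields $|\nabla u_n|^{p-2}\nabla u_n\to |\nabla u|^{p-2}\nabla u$ weakly in $L^{p'}_{\rm loc}(\Omega)$ and allows one to pass to the limit in \eqref{pweakdef} against any $\varphi\in C^1_c(\Omega)$, completing the proof.
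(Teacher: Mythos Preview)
Your argument is essentially correct but takes a genuinely different route from the paper's at the crucial step of the local $W^{1,p}$ estimate on the approximants $u_n$. The paper derives $\|u_n\|_{W^{1,p}(\omega)}\le C_\omega$ directly by a cutoff trick (its a priori Lemma): testing with $T_k(u_n)\phi^p$ for a cutoff $\phi$, and then bounding the dangerous term $\int h_n(u_n)f\,T_k(u_n)\phi^p\le k\int h_n(u_n)f\phi^p$ by plugging $\varphi=\phi^p$ back into the equation itself, yields the local bound with no positivity input whatsoever; convergence of the right-hand side is then handled via a near-zero cutoff $V_\delta(u_n)\varphi$ together with the Fatou-based inclusion $\{u_p=0\}\subset\{f=0\}$, rather than by dominated convergence from a lower bound. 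Your subsolution/comparison approach (build $\underline u$ solving $-\Delta_p\underline u=h_0T_1(f)\chi_\omega$ with $\underline u\le\delta_0$, compare on $\{\underline u>u_n\}$) does work, but requires more care than you indicate: one must choose $\omega$ so that $f\not\equiv0$ on it, handle the trivial case $f\equiv0$ separately, use connectedness of $\Omega$ for the strong maximum principle, and note that your test function $\psi$ does not literally bound $u_n^{(\sigma-1+p)/p}$ in $W^{1,p}_0$ (on $\{u_n>k_0\}$ the integrand is $k_0^{\sigma-1}|\nabla u_n|^p$, not $u_n^{\sigma-1}|\nabla u_n|^p$), though what you actually need for the local gradient bound does follow once $u_n\ge c_\omega$; the parenthetical ``equivalently $h_n(s)=h(s+1/n)$'' is also false for non-monotone $h$. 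The payoff of your route is the uniform local lower bound $u_n\ge c_\omega>0$, a stronger pointwise statement; the payoff of the paper's route is that it is entirely positivity-free and hence transfers verbatim to the case of merely nonnegative, possibly vanishing $f$ treated later.
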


\subsection{A priori estimates} In order to prove Theorem \ref{existence_p} we need to establish some general a priori estimates that will be the content of this section. 
We look for a priori estimates for a weak solution to the following problem
\begin{equation}\label{prioripbplap}
\begin{cases}
\displaystyle - \Delta_p v= \overline{h}(v)f &  \text{in}\, \Omega, \\
u=0 & \text{on}\ \partial \Omega,
\end{cases}
\end{equation}	
where $1<p<N$, $\overline h:[0,\infty)\to [0,\infty)$  is bounded continuous function satisfying \eqref{h2},  and $f$ is a nonnegative function in $L^{(p^{*})'}(\Omega)$. For a \emph{weak solution} to problem \eqref{prioripbplap} we mean a function $v\in W^{1,p}_0(\Omega)$ such that

\begin{equation}\label{prioriweakdef}
\int_{\Omega}|\nabla v|^{p-2}\nabla v \cdot \nabla \varphi = \int_{\Omega} \overline{h}(v)f\varphi \ \ \ \forall \varphi\in {W^{1,p}_0(\Omega)}.
\end{equation}

We recall  that $\sigma$ is defined by \eqref{sigma},  and,  for any $k>0$,  we set for simplicity
$$
\epsilon_{k}:=\sup_{s\in[k,\infty)} \overline{h}(s)-\overline{h}(\infty).
$$
Observe that  $\epsilon_{k}\geq 0$  and 
$$
\lim_{k\to\infty}\epsilon_{k}=0\,.
$$
 We have the following

\begin{lemma}\label{priorilemma}
	Let $0\le f\in L^{(p^{*})'}(\Omega)$ and let $\overline h$ be a bounded continuous function satisfying \eqref{h2}. Then every weak solution $v$ to problem \eqref{prioripbplap} satisfies
	\begin{equation}\label{apriorigk}
	||G_{k}(v)||_{W^{1,p}_{0}(\Omega)}\le C(p,\mathcal{S}_p, \epsilon_k, ||f||_{L^{(p^{*})'}(\Omega)}) \ \ \ \text{for all}\ k>0\,,
	\end{equation}	
	and 
	\begin{equation}\label{priorieq}
	 ||v||_{W^{1,p}(\omega)}\le C(p,\mathcal{S}_p, \overline{h}(\infty), ||f||_{L^{(p^{*})'}(\Omega)},\omega) \ \ \ \text{for all}\ \omega \subset \subset \Omega.
	\end{equation}
						
\end{lemma}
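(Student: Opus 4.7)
The plan is to obtain both bounds by inserting carefully chosen truncations of $v$ as test functions in the weak formulation \eqref{prioriweakdef}, exploiting that $v\in W^{1,p}_0(\Omega)$ is part of the definition of weak solution. For the first estimate we use $G_k(v)$ itself, which controls the ``high part'' of $v$ via the behavior of $\overline h$ at infinity; for the second one we combine this with a Caccioppoli-type local estimate on $T_k(v)$ obtained via a smooth cut-off.

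For \eqref{apriorigk}, since $v\in W^{1,p}_0(\Omega)$ also $G_k(v)\in W^{1,p}_0(\Omega)$, and testing \eqref{prioriweakdef} with $G_k(v)$ yields
\begin{equation*}
\int_\Omega |\nabla G_k(v)|^p \;=\; \int_\Omega \overline h(v) f \, G_k(v),
\end{equation*}
because $\nabla v\cdot\nabla G_k(v)=|\nabla G_k(v)|^2$ a.e. The right-hand side is supported on $\{v>k\}$, where by the very definition of $\epsilon_k$ one has $\overline h(v)\le \overline h(\infty)+\epsilon_k$. Applying Hölder with exponents $(p^*)'$ and $p^*$ and then the Sobolev inequality to $G_k(v)$ gives
\begin{equation*}
\|\nabla G_k(v)\|_{L^p(\Omega)}^{p} \;\le\; (\overline h(\infty)+\epsilon_k)\,\mathcal{S}_p\,\|f\|_{L^{(p^*)'}(\Omega)}\,\|\nabla G_k(v)\|_{L^p(\Omega)},
\end{equation*}
and dividing through (if the norm is non-trivial, otherwise the bound is trivial) produces \eqref{apriorigk}.

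For \eqref{priorieq}, fix $\omega\subset\subset\Omega$ and pick $\eta\in C^1_c(\Omega)$ with $\eta\equiv1$ on $\omega$, $0\le\eta\le1$. Testing \eqref{prioriweakdef} with $T_k(v)\eta^p$ for a fixed $k>0$ gives
\begin{equation*}
\int_\Omega |\nabla T_k(v)|^p\eta^p \;+\; p\int_\Omega |\nabla v|^{p-2}\nabla v\cdot\nabla\eta\,\eta^{p-1}T_k(v) \;=\; \int_\Omega \overline h(v)f\,T_k(v)\eta^p.
\end{equation*}
Splitting the middle term along $\{v\le k\}$ and $\{v>k\}$: on $\{v\le k\}$ the integrand involves $\nabla T_k(v)$ and, since $T_k(v)\le k$, it is absorbed into the first term via Young's inequality with a remainder proportional to $k^p\|\nabla\eta\|_p^p$; on $\{v>k\}$ we have $T_k(v)=k$ and $\nabla v=\nabla G_k(v)$, so that piece is bounded by $pk\|\nabla\eta\|_\infty\|\nabla G_k(v)\|_{L^p(\Omega)}^{p-1}|\Omega|^{1/p}$, controlled by \eqref{apriorigk}. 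The right-hand side is estimated through Hölder–Sobolev. Finally, since $\nabla v=\nabla T_k(v)+\nabla G_k(v)$ on disjoint sets, combining the local bound for $\nabla T_k(v)$ on $\omega$ with the global bound for $\nabla G_k(v)$ and the trivial $L^p$ bound $\|v\|_{L^p(\omega)}\le k|\omega|^{1/p}+\|G_k(v)\|_{L^p(\Omega)}$ yields \eqref{priorieq}.

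The main obstacle is the dependence of the constants on $\overline h$. The contribution coming from $\{v\le k\}$ in the right-hand side of the local test a priori involves $\sup_{[0,k]}\overline h$; the task is therefore to choose $k$ large enough, using \eqref{h2}, so that this dependence collapses to a dependence on $\overline h(\infty)$ and $\epsilon_k$ only. Apart from this careful tuning, both steps are standard Caccioppoli–Sobolev computations and the argument is self-contained once $v\in W^{1,p}_0(\Omega)$ is secured from the very definition of weak solution.
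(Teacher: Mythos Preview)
Your argument for \eqref{apriorigk} is correct and matches the paper's.

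For \eqref{priorieq}, however, there is a genuine gap, and you have in fact identified it yourself in the last paragraph but proposed the wrong remedy. The constant in \eqref{priorieq} is required to depend on $\overline h$ only through $\overline h(\infty)$, \emph{not} through $\|\overline h\|_{L^\infty}$ or $\sup_{[0,k]}\overline h$. This is essential for the application: the lemma is later used with $\overline h = h_n := T_n(h)$, whose sup norm equals $n$ and blows up, whereas $h_n(\infty)=h(\infty)$ stays fixed. Your suggestion to ``choose $k$ large enough, using \eqref{h2}'' cannot work: \eqref{h2} controls $\overline h$ only at infinity, so increasing $k$ does nothing to shrink $\sup_{[0,k]}\overline h$ (take e.g.\ $\overline h(0)$ huge and $\overline h(\infty)$ small).

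The paper's fix is to avoid pointwise bounds on $\overline h$ altogether on the zero-order term. One bounds
\[
\int_\Omega \overline h(v) f\, T_k(v)\,\phi^p \;\le\; k\int_\Omega \overline h(v) f\,\phi^p
\]
and then \emph{uses the equation a second time}, testing \eqref{prioriweakdef} with $\varphi=\phi^p$, to convert the right-hand side into a pure gradient term:
\[
k\int_\Omega \overline h(v) f\,\phi^p \;=\; pk\int_\Omega |\nabla v|^{p-2}\nabla v\cdot\nabla\phi\,\phi^{p-1}.
\]
Splitting this into $T_k$ and $G_k$ pieces as you do with the cross term, one ends up with only gradient quantities on both sides; Young's inequality absorbs the $T_k$-gradient contribution, and the $G_k$-gradient contribution is controlled by \eqref{apriorigk}. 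No reference to $\sup_{[0,k]}\overline h$ ever enters, and the final constant depends on $\overline h$ only through $\overline h(\infty)$ (and one fixes $k$ large at the end merely so that $\epsilon_k\le 1$). Once you insert this step in place of ``estimated through H\"older--Sobolev'', the rest of your outline is fine.
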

\begin{proof}
	We first look for \eqref{apriorigk}. 						
	For a fixed $k>0$, one takes $G_k(v)$ as test function in \eqref{prioriweakdef}, and using the H\"older and the Sobolev inequality,  one readily gets
	\begin{equation*}
	\begin{aligned}\label{stimagk}
	\int_{\Omega}|\nabla G_k(v)|^{p} &= \int_{\Omega} \overline{h}(v)fG_k(v) \le (\overline{h}(\infty)+\epsilon_k)||f||_{L^{(p^{*})'}(\Omega)} \left(\int_{\Omega} G_k^{p^*}(v)\right)^\frac{1}{p^*} 
	\\ 
	&\le (\overline{h}(\infty)+\epsilon_k)||f||_{L^{(p^{*})'}(\Omega)} \mathcal{S}_p\left(\int_{\Omega} |\nabla G_k(v)|^p\right)^\frac{1}{p}\,,
	\end{aligned}
	\end{equation*}
	that gives 
	\begin{align}\label{priorigk}
	||G_{k}(v)||_{W^{1,p}_{0}(\Omega)} \le [( \overline{h}(\infty)+\epsilon_{k})||f||_{L^{(p^{*})'}(\Omega)}\mathcal{S}_p]^{\frac{1}{p-1}}\,.
	\end{align}

	Now, in order to obtain \eqref{priorieq}, we look for a local estimate on $T_{k}(v)$.  Consider $\omega\subset\subset \Omega$ and $\phi \in C^1_c(\Omega)$ as a cut-off function for $\omega$, i. e. $0\leq \phi\leq 1$, $\phi=1$ on $\omega$ and $|\nabla \phi |\leq c_{\omega}$, where $c_{\omega}$ is a constant that only depends on $\dist(\omega,\partial \Omega)$.  We take  $T_k(v)\phi^p$ as test function in \eqref{prioriweakdef}, we have
	\begin{align*}
	\int_{\Omega}|\nabla T_k(v)|^{p}\phi^p + p\int_{\Omega}|\nabla v|^{p-2}\nabla v \cdot \nabla \phi \phi^{p-1}T_k(v) = \int_{\Omega} \overline{h}(v)fT_k(v)\phi^p, 
	\end{align*}			
	and so
	\begin{equation}
	\begin{aligned}\label{priori=}
	\int_{\Omega}|\nabla T_k(v)|^{p}\phi^p &\le  pk\left|\int_{\Omega}|\nabla T_k(v)|^{p-2}\nabla T_k(v) \cdot \nabla \phi \phi^{p-1}\right| 
	\\ 
	&+ pk\left|\int_{\Omega}|\nabla G_k(v)|^{p-2}\nabla G_k(v) \cdot \nabla \phi \phi^{p-1}\right| +\int_{\Omega} \overline{h}(v)fT_k(v)\phi^p. 
	\end{aligned}
	\end{equation}	
 By \eqref{prioriweakdef}, choosing $\varphi =\phi^p$, we have
 \begin{equation}
 \begin{aligned}\label{prioritk3}
 \int_{\Omega} \overline{h}(v)fT_k(v)\phi^p &\le k \int_{\Omega} \overline{h}(v)f\phi^p = pk  \int_{\Omega} |\nabla v|^{p-2}\nabla v \cdot \nabla \phi \phi^{p-1}
 \\ 
 &\le pk \left| \int_{\Omega} |\nabla T_k(v)|^{p-2}\nabla T_k(v) \cdot \nabla \phi \phi^{p-1} \right| 
 \\ 
 &+ pk \left| \int_{\Omega} |\nabla G_k(v)|^{p-2}\nabla G_k(v) \cdot \nabla \phi \phi^{p-1} \right|, 
 \end{aligned}
 \end{equation}	
and collecting \eqref{prioritk3} and \eqref{priori=} one deduces
	\begin{equation*}
	\begin{aligned}\label{priori=2}
	\int_{\Omega}|\nabla T_k(v)|^{p}\phi^p &\le  2pk\left|\int_{\Omega}|\nabla T_k(v)|^{p-2}\nabla T_k(v) \cdot \nabla \phi \phi^{p-1}\right| 
	\\ 
	&+ 2pk\left|\int_{\Omega}|\nabla G_k(v)|^{p-2}\nabla G_k(v) \cdot \nabla \phi \phi^{p-1}\right|. 
	\end{aligned}
	\end{equation*}	
	By the Young inequality and by \eqref{priorigk} the previous implies, for a positive $\eta$ to be fixed later, that 
	\begin{equation}
	\begin{aligned}\label{prioritk}
	\int_{\Omega}|\nabla T_k(v)|^{p}\phi^p &\le  (p-1)\eta^{p'}\int_{\Omega}|\nabla T_k(v)|^{p}\phi^{p} + \frac{(2k)^p}{\eta^p} \int_{\Omega}|\nabla\phi|^{p}
	\\ 
	&+ (p-1)\int_{\Omega}|\nabla G_k(v)|^{p}\phi^{p} + (2k)^p \int_{\Omega}|\nabla\phi|^{p}
	\\ 
	&\stackrel{\eqref{priorigk}}{\le} (p-1)\eta^{p'}\int_{\Omega}|\nabla T_k(v)|^{p}\phi^{p} + \frac{(2k)^p}{\eta^p} \int_{\Omega}|\nabla\phi|^{p}
	\\ 
	&+ (p-1) [(\overline{h}(\infty)+\epsilon_{k})||f||_{L^{(p^{*})'}(\Omega)}\mathcal{S}_p ]^{\frac{p}{p-1}}+ (2k)^p \int_{\Omega}|\nabla\phi|^{p}. 
	\end{aligned}	
\end{equation}	
	We fix $\eta$ such that $1-	(p-1)\eta^{p'}=\frac12$,  that is $\eta=\left(\frac{1}{2(p-1)}\right)^{\frac{1}{p'}}$, then \eqref{prioritk} implies	
	
\begin{equation}
	\begin{aligned}\label{prioritk4}
	\int_{\omega}|\nabla T_k(v)|^{p} &\le  2^{p+1} k^p [(2(p-1))^{p-1}+1] \int_{\Omega}|\nabla\phi|^{p} 
	\\ 
	&+ 2(p-1)[(\overline{h}(\infty)+\epsilon_{k})||f||_{L^{(p^{*})'}(\Omega)}\mathcal{S}_p ]^{\frac{p}{p-1}}
	\\
	&\le 2^{p+1} k^p [(2(p-1))^{p-1}+1] c_\omega^{p} |\Omega| 
	\\ 
	&+ 2(p-1)[(\overline{h}(\infty)+\epsilon_{k})||f||_{L^{(p^{*})'}(\Omega)}\mathcal{S}_p ]^{\frac{p}{p-1}}.
	\end{aligned}	
\end{equation}
	Now we fix $k$ large enough in order to have  $\epsilon_{k}\leq 1$ and we collect  \eqref{priorigk} and \eqref{prioritk4} yielding  \eqref{priorieq}. 
\end{proof}\bk

\subsection{Proof of Theorem \ref{existence_p}}

\begin{proof}[Proof of Theorem \ref{existence_p}]
If $h(0)< \infty$ then the existence of a solution belonging to $W^{1,p}_0(\Omega)$ follows by standard application of a fixed point argument,  so that, without loosing generality, we assume $h(0)= \infty$. Let us introduce the following scheme of approximation
\begin{equation}\label{pbplapapprox}
\begin{cases}
\displaystyle - \Delta_p u_{n}= h_n(u_{n})f &  \text{in}\, \Omega, \\
u_{n}=0 & \text{on}\ \partial \Omega,
\end{cases}
\end{equation}	
where $h_n(s)=T_n(h(s))$, for $s\in[0,\infty)$,  and  $n > h(\infty)$. The existence of such $u_n\in W^{1,p}_0(\Omega)$ follows again by standard  Schauder fixed point theorem. Moreover, $u_{n}$ is easily seen to be nonnegative. 
We  apply Lemma \ref{priorilemma} to $u_n$, deducing 
\begin{equation}\label{also}
||u_n||_{W^{1,p}(\omega)}\le C_\omega, \ \ \ \forall \omega \subset\subset \Omega
\end{equation}
which implies that, up to subsequences,  $u_n$ weakly converges in $W^{1,p}(\omega)$ and a.e. in $\Omega$ to a function $u_p$.
Moreover,  by weak lower semicontinuity in \eqref{apriorigk} (applied to $u_{n}$)  one also gets the boundary condition \eqref{pbordodef}.
\medskip 
Now we  prove \eqref{pweakdef}.  First of all, using \eqref{pbplapapprox} and  \eqref{also} we observe that  $h_n(u_n)f$ is locally bounded in $L^1(\Omega)$;  in fact, for any  $\varphi\in C^1_c(\Omega)$ one  obtains
\begin{equation*}\label{fatoul1}
\int_{\Omega} h_n(u_n)f \varphi = \int_{\Omega} |\nabla u_n|^{p-2}\nabla u_n \cdot \nabla \varphi \le \frac{1}{p'}\int_{supp \varphi} |\nabla u_n|^{p} + \frac{1}{p} \int_{\Omega} |\nabla \varphi|^p \le C \,,
\end{equation*}
and $\varphi$ can be chosen to be a cut-off function for any compact subset $\omega$ of $\Omega$.  We can then  apply Theorem $2.1$ of \cite{bm} in order to deduce that $\nabla u_n$ converges a.e. in $\Omega$ to $\nabla u_p$. In particular,  $|\nabla u_n|^{p-2}\nabla u_n$ locally strongly converges to   $|\nabla u_p|^{p-2}\nabla u_p$ in $L^{q}(\Omega, \mathbb{R}^{N})$, for any $q<p'$. 		
Moreover,  by  the Fatou lemma,  it follows
\begin{align}\label{limitel1}
\int_{\Omega}h(u_p)f \varphi \le C,
\end{align}			
for any nonnegative $\varphi\in C^1_c(\Omega)$, which implies  (recall we are assuming $h(0)=\infty$) 
\begin{equation}\label{u0f0}
\{u_p=0\}\subset \{f=0\},
\end{equation}
 up to a set of zero Lebesgue measure.
\medskip 				
Now we consider  $V_\delta(u_n)\varphi$,  as a test for the weak formulation of \eqref{pbplapapprox} where 
\begin{align}\label{V_delta}
\displaystyle
V_\delta(s):=
\begin{cases}
1 \ \ &s\le \delta, \\
\displaystyle\frac{2\delta-s}{\delta} \ \ &\delta <s< 2\delta, \\
0 \ \ &s\ge 2\delta,
\end{cases}
\end{align}
 and $0\leq \varphi\in C^1_c(\Omega)$.  Observing   that $V'_\delta(s)\le 0$ one deduces that 
\begin{align*}
\int_{\Omega} h_n(u_n)f V_\delta(u_n)\varphi \le \int_{\Omega} |\nabla u_n|^{p-2}\nabla u_n \cdot \nabla \varphi V_\delta(u_n)\,.
\end{align*}		
We first pass to the limit with respect to $n$. Using the strong convergence of $|\nabla u_n|^{p-2}\nabla u_n$ and the a.e. and $\ast$-weak convergence of $V_\delta(u_n)$ in $L^{\infty}(\Omega)$,  we can pass to the limit on the right hand side.  By Fatou's lemma on the left hand side we then deduce
\begin{align*}
\int_{\{u_p\le \delta\}} h(u_p)f \varphi \le \int_{\Omega} |\nabla u_p|^{p-2}\nabla u_p \cdot \nabla \varphi V_\delta(u_p).
\end{align*}			
Now we take in the previous $\delta \to 0^+$ obtaining, by $\ast$-weak convergence
\begin{equation*}\label{alre}
\lim_{\delta\to 0^+}\int_{\{u_p\le \delta\}} h(u_p)f \varphi \le \int_{\{u_p=0\}} |\nabla u_p|^{p-2}\nabla u_p \cdot \nabla \varphi=0.
\end{equation*}		
Therefore,  
\begin{align*}
\int_{\Omega}h_n(u_n)f\varphi=  \int_{\{u_n> \delta\}}h_n(u_n)f\varphi +\epsilon(n,\delta)
\,,
\end{align*}	
where $\epsilon(n,\delta)$ is a quantity that vanishes as first $n$ goes to $\infty$ and then $\delta$ goes to zero. 
We observe that, without loss of generality, we  can always assume that  $\delta \notin \{\eta: |\{u_p=\eta\}|>0\}$ which is at  most a countable set;  this will imply, in particular,  that  $\chi_{\{u_n> \delta\}}$ converges a.e. in $\Omega$	to $\chi_{\{u_p> \delta\}}$ as $n\to\infty$.  Moreover,  we both  have  
$$h_n(u_n)f\chi_{\{u_n> \delta\}}\varphi \le \sup_{s\in[\delta, \infty)} h(s) \ f \varphi \in L^1(\Omega),$$
and  
$$h(u_p)f\chi_{\{u_p> \delta\}}\varphi \le h(u_p)f\varphi \stackrel{\eqref{limitel1}}{\in} L^1(\Omega)\,.$$
We can then  apply the Lebesgue theorem in order to deduce that 
$$\lim_{\delta\to 0^+}\lim_{n\to \infty} \int_{\Omega}h_n(u_n)f \varphi = \int_{\{u_p> 0\}}h(u_p)f\varphi \stackrel{\eqref{u0f0}}{=} \int_{\Omega}h(u_p)f\varphi\,.$$	
On  the left hand side of the weak formulation of \eqref{pbplapapprox} we pass to the limit using the  weak convergence and a.e. convergence of $\nabla u_n$ to $\nabla u_p$, finally obtaining 
\begin{equation*}\label{pdistrsol}
\int_{\Omega}|\nabla u_p|^{p-2} \nabla u_p\cdot \nabla \varphi = \int_{\Omega}h(u_p)f\varphi.
\end{equation*} 		
for every nonnegative $\varphi \in C^1_c(\Omega)$ from which easily  \eqref{pweakdef} follows.
\end{proof}
\begin{remark}\label{remtest}
An important remark is that,  a careful  re-adaptation of  the proof of Theorem \ref{existence_p}, can  allow to  slightly improve  the set of admissible test function in \eqref{pweakdef}. Precisely, following the same steps, one can actually realize  that \eqref{pweakdef} holds true for $\varphi \in W^{1,p}(\Omega)$ having compact support in $\Omega$. We will use this property later once we will pass to the limit as $p\to 1^{+}$.
	\bk
\end{remark}
	\begin{remark} 
			We also highlight the fact that if $h(\tilde{k})=0$ for some $\tilde{k}>0$ then we can retrieve some more informations on $u_p$. Indeed Theorem \ref{existence_p} guarantees the existence of a distributional solution to problem 
				\begin{equation*}
				\begin{cases}
				\displaystyle - \Delta_p u_p= \underline{h}(u_p)f &  \text{in}\, \ \Omega, \\
				u_p=0 & \text{on}\ \partial \Omega,
				\end{cases}
				\end{equation*}			
			where 
				\begin{align*}
				\displaystyle
				\underline{h}(s):= 
				\begin{cases}
					h(s) \ \ &\text{if } s\le \tilde{k},
					\\
					0 \ \ &\text{if}\ s> \tilde{k}.
				\end{cases}
				\end{align*}			
			As far as  it has been obtained, $u_p$ is the almost everywhere limit of the $u_n$ solutions of \eqref{pbplapapprox} with $\underline{h}_n$ in place of $h_n$. By considering  $G_{\tilde{k}}(u_n)$ as a test function in \eqref{pbplapapprox} one thus obtain \begin{equation*}
					\int_\Omega |\nabla G_{\tilde{k}}(u_n)|^p \le \int_{\Omega} \underline{h}_n(u_n)f G_{\tilde{k}}(u_n) = 0, 
				\end{equation*}				
			that implies 
				\begin{equation*}
					||u_n||_{L^\infty(\Omega)}\le \tilde{k}, 
				\end{equation*}			
			and,  so 				
				\begin{equation}\label{uplimitata}
					||u_p||_{L^\infty(\Omega)}\le \tilde{k}. 
				\end{equation}						
			Since  $u_p$ satisfies
				\begin{equation*}
					\int_{\Omega}|\nabla u_p|^{p-2} \nabla u_p\cdot \nabla \varphi = \int_{\Omega}\underline{h}(u_p)f\varphi,
				\end{equation*}
			for every $\varphi \in C^{1}_{c}(\Omega)$, using  \eqref{uplimitata},   one also has
				\begin{equation*}
				\int_{\Omega}|\nabla u_p|^{p-2} \nabla u_p\cdot \nabla \varphi = \int_{\Omega}h(u_p)f\varphi\,,
				\end{equation*}				
			namely $u_p$ solves 
\begin{equation*}
				\begin{cases}
				\displaystyle - \Delta_p u_p= h(u_p)f &  \text{in}\, \Omega, \\
				u_p=0 & \text{on}\ \partial \Omega\,,
				\end{cases}
				\end{equation*}
				as it also satisfies \eqref{pbordodef}. A trivial observation is that, in this case, $h$ needs not to be bounded at infinity. 	
					
		\end{remark}

 \subsection{More regular data  $f\in L^{m}(\Omega)$ with $(p^{\ast})' \leq  m <N$} \label{mrd}
 
  As we already observed, a natural question concerning solutions $u_{p}$ to problem \eqref{pbplap} is whether they enjoy property \eqref{sciunzi}, as this is the case, for instance, in the model $h(s)=s^{-\gamma}$ (even for merely integrable data). As far as our nonlinear term is concerned the behavior at infinity of $h$ plays a crucial role so that \eqref{sciunzi} is not expected in general  for such a large class of data. 
What is true in general, for $f\in L^{(p^{\ast})'}(\Omega)$, is that the solutions  $u_{p}$ of \eqref{pbplap} found in Theorem \ref{existence_p} satisfy
\begin{equation}T_k^{\frac{\sigma-1+p}{p}}(u_{p}) \in W^{1,p}_0(\Omega) \ \ \ \text{and} \ \ \ G_k(u_{p})\in W^{1,p}_0(\Omega) \ \ \ \text{for all } k>0.\label{regGk}\end{equation}
The first in \eqref{regGk} can be easily obtained by taking   $T_k^\sigma(u_n)$ as a test function in \eqref{pbplapapprox} while the second one has been already shown to follow from  \eqref{apriorigk}. 

However, if the datum $f$ is more regular something more can be said 
\begin{theorem}\label{more1}
	Let $0 \le f\in L^m(\Omega)$ with $m$ such that $(p^*)' \leq m< \frac{N}{p}$ and let $k>0$.  Then there exists $C>0$ such that  the solution $u_{p}$ to \eqref{pbplap} found in Theorem \ref{existence_p} satisfies
	\begin{equation}\label{stimagkreg}
	\|G_k^{\frac{q-1+p}{p}}(u_{p})\|_{W^{1,p}_{0}(\Omega)} \le C,
	\end{equation}
	for any $q$ such that $\displaystyle 1\le q\le \frac{p^{*}(p-1)}{m'p-p^{*}}$. Moreover, if $f\in L^m(\Omega)$ with $m= \frac{pN-N+N\sigma}{pN-N+p\sigma}$, then 
		\begin{equation}\label{stimaup}
	\|u_{p}^{\frac{\sigma-1+p}{p}}\|_{W^{1,p}_{0}(\Omega)} \le C\,.
	\end{equation}
	
	Finally, if  $f\in L^m(\Omega)$ with $m> \frac{N}{p}$ then $u_{p}\in L^{\infty}(\Omega)$. 
\end{theorem}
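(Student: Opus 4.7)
The plan is to prove the three estimates at the level of the approximations $u_n$ solving \eqref{pbplapapprox} and then transfer them to $u_p$ by lower semicontinuity and Fatou, using the a.e.\ convergences of $u_n$ and $\nabla u_n$ already established in the proof of Theorem \ref{existence_p}. Throughout, fix $k$ large enough that $h(s)\leq h(\infty)+\epsilon_k=:M_k$ for all $s\geq k$, which is possible by \eqref{h2}; on $\{u_n>k\}$ one then has $h_n(u_n)\leq M_k$ uniformly in $n$, and this is what allows the nonlinear term to be treated by linear testing techniques.

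To obtain \eqref{stimagkreg} I would test the equation for $u_n$ with $G_k^{q}(u_n)$ (admissible after a routine truncation $T_j(G_k(u_n))^q$, $j\to\infty$). The chain rule rewrites the left-hand side as
\[
q\int_\Omega G_k^{q-1}(u_n)\,|\nabla G_k(u_n)|^p=\frac{q\,p^p}{(q-1+p)^p}\int_\Omega\bigl|\nabla G_k^{\frac{q-1+p}{p}}(u_n)\bigr|^p,
\]
while H\"older's inequality with exponents $m$ and $m'$ bounds the right-hand side by $M_k\|f\|_{L^m(\Omega)}\bigl(\int G_k^{qm'}(u_n)\bigr)^{1/m'}$. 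The choice $q=(p-1)p^\ast/(m'p-p^\ast)$ is precisely the one for which $qm'=(q-1+p)p^\ast/p$, so the Sobolev inequality applied to $G_k^{(q-1+p)/p}(u_n)\in W^{1,p}_0(\Omega)$ produces the same power on both sides; the resulting absorption succeeds because $m<N/p$ is equivalent to $p/p^\ast>1/m'$. Smaller values of $q$ then come for free by monotonicity.

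For \eqref{stimaup}, a direct algebraic check shows that the exponent $m=(pN-N+N\sigma)/(pN-N+p\sigma)$ is exactly the one for which $q=\sigma$ saturates the identity $q=(p-1)p^\ast/(m'p-p^\ast)$, so the previous step already controls $\|G_k^{(\sigma-1+p)/p}(u_n)\|_{W^{1,p}_0(\Omega)}$. To dominate the complementary region, I would test with $T_k^\sigma(u_n)$: since $\sigma=\max(1,\gamma)\geq\gamma$, assumption \eqref{h1} together with the boundedness of $h$ away from $0$ yields $h(u_n)\,T_k^\sigma(u_n)\leq C(k,k_0,c_1,h(\infty))$ pointwise, whence
\[
\int_\Omega T_k^{\sigma-1}(u_n)\,|\nabla T_k(u_n)|^p \leq C\,\|f\|_{L^1(\Omega)}.
\]
Splitting $\int u_n^{\sigma-1}|\nabla u_n|^p$ over $\{u_n\leq k\}$ and $\{u_n>k\}$ and using $u_n^{\sigma-1}\leq C_\sigma(k^{\sigma-1}+G_k^{\sigma-1}(u_n))$ on the latter set (valid since $\sigma\geq 1$), the two preceding estimates combined with \eqref{stimagkreg} for $q=1$ (available because $m\geq(p^\ast)'$) give \eqref{stimaup}.

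The final $L^\infty$ statement I would obtain by a classical Stampacchia iteration on $u_n$. Testing with $G_k(u_n)$ and invoking $h_n\chi_{\{u_n>k\}}\leq M_k$, H\"older with exponents $m$, $p^\ast$, $(1-1/m-1/p^\ast)^{-1}$ and Sobolev yield
\[
\|G_k(u_n)\|_{L^{p^\ast}(\Omega)} \leq C\,|\{u_n>k\}|^{\frac{1}{p-1}\left(1-\frac{1}{m}-\frac{1}{p^\ast}\right)},
\]
and combining with $(h-k)\,|\{u_n>h\}|^{1/p^\ast}\leq \|G_k(u_n)\|_{L^{p^\ast}(\Omega)}$ for $h>k$ produces the recursion
\[
|\{u_n>h\}|\leq \frac{C}{(h-k)^{p^\ast}}\,|\{u_n>k\}|^{\frac{p^\ast}{p-1}\left(1-\frac{1}{m}-\frac{1}{p^\ast}\right)}.
\]
The exponent on $|\{u_n>k\}|$ exceeds $1$ exactly when $m>N/p$, so the Stampacchia lemma yields a uniform $L^\infty$ bound on $u_n$ which descends to $u_p$. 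The main technical delicacy throughout is the joint handling of the behaviour of $h$ near $0$ (via \eqref{h1} and the $T_k$ truncation) and near infinity (via \eqref{h2} and the choice of $k$), which is exactly what keeps the constants independent of $n$ and makes the absorption arguments go through.
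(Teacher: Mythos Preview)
Your argument is correct and follows essentially the same route as the paper: test the approximating equations with powers of $G_k(u_n)$ and $T_k(u_n)$, use \eqref{h1} near the origin and \eqref{h2} above level $k$, then absorb via Sobolev and pass to the limit by lower semicontinuity; the $L^\infty$ bound is likewise obtained by Stampacchia after testing with $G_k(u_n)$. The only organizational difference is that the paper packages the two tests into a single auxiliary function $\Phi(t)=t^\sigma$ for $t\le k$, $\Phi(t)=(t-k)^q+k^\sigma$ for $t>k$, which yields the $T_k$ and $G_k$ estimates simultaneously and makes the case $q=\sigma$ (hence \eqref{stimaup}) immediate by taking $k=1$, whereas you recombine them by the splitting $u_n^{\sigma-1}\le C_\sigma(k^{\sigma-1}+G_k^{\sigma-1}(u_n))$; both are equivalent.
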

\begin{proof} 
For a fixed $k>0$ let us define the following auxiliary function
$$
\Phi(t):=
\begin{cases}
	t^\sigma \ \ &\text{if}\ \ t\le k,\\
	(t-k)^q +k^\sigma \ \ &\text{if}\ \ t> k,	
\end{cases}
$$
and  take $\Phi(u_n)$ as test in  \eqref{pbplapapprox} obtaining, using the H\"older inequality and the assumption on $q$, that
\begin{equation}
\begin{aligned}\label{stimareg}
	&\left(\frac{p}{\sigma-1+p}\right)^p\sigma\int_{\Omega} |\nabla T_k^{\frac{\sigma-1+p}{p}}(u_n)|^p + \left(\frac{p}{q-1+p}\right)^p q\int_{\Omega} |\nabla G_k^{\frac{q-1+p}{p}}(u_n)|^p \\ 
	& \le \max_{s\in [0,k]}h(s)s^\sigma \int_{\{u_n \le k\}}f
	+ \sup_{s\in[k,\infty)} h(s)\int_{\{u_n > k\}} f G_k^q(u_n) + \sup_{s\in[k,\infty)} h(s)k^\sigma \int_{\Omega}f \\ & \le   C + \sup_{s\in[k,\infty)} h(s) ||f||_{L^m(\Omega)}|\Omega|^{\frac{1}{m'} - \frac{qp}{(q-1+p)p^*}} \left(\int_{\Omega}G_k^{\frac{(q-1+p)p^*}{p}}(u_n)\right)^{\frac{qp}{(q-1+p)p^*}}\,.  
\end{aligned}
\end{equation}
From \eqref{stimareg},  using the Sobolev inequality we deduce 
\begin{align*}
& \left(\frac{p}{q-1+p}\right)^p q \mathcal{S}_p^{-p}\left(\int_{\Omega} G_k^{\frac{(q-1+p)p^*}{p}}(u_n)\right)^\frac{p}{p^*} \le \left(\frac{p}{q-1+p}\right)^p q\int_{\Omega} |\nabla G_k^{\frac{q-1+p}{p}}(u_n)|^p  
\\&\le  C + \sup_{s\in[k,\infty)} h(s) ||f||_{L^m(\Omega)}|\Omega|^{\frac{1}{m'} - \frac{qp}{(q-1+p)p^*}} \left(\int_{\Omega}G_k^{\frac{(q-1+p)p^*}{p}}(u_n)\right)^{\frac{qp}{(q-1+p)p^*}} 
\\ &{\le} C + \varepsilon\sup_{s\in[k,\infty)} h(s) ||f||_{L^m(\Omega)}|\Omega|^{\frac{1}{m'} - \frac{qp}{(q-1+p)p^*}} \left(\int_{\Omega}G_k^{\frac{(q-1+p)p^*}{p}}(u_n)\right)^{\frac{p}{p^*}} 
\\&+ C_\varepsilon\sup_{s\in[k,\infty)} h(s) ||f||_{L^m(\Omega)}|\Omega|^{\frac{1}{m'} - \frac{qp}{(q-1+p)p^*}},
\end{align*} 
where in the last step we also used Young's inequality. Up to suitably choose $\varepsilon$, this gives that $G_k(u_n)$ is bounded in $L^\frac{(q-1+p)p^*}{p}(\Omega)$. Hence by \eqref{stimareg} one gets
\begin{align}\label{oneg}
&\int_{\Omega} |\nabla T_k^{\frac{\sigma-1+p}{p}}(u_n)|^p + \int_{\Omega} |\nabla G_k^{\frac{q-1+p}{p}}(u_n)|^p \le C\,,
\end{align}
that implies \eqref{stimagkreg} by weak lower semicontinuity.

\medskip 

If $f\in L^m(\Omega)$ with $m= \frac{pN-N+N\sigma}{pN-N+p\sigma}$, then $q=\sigma$ and one obtains \eqref{stimaup} by weak lower semicontinuity in \eqref{oneg} and considering, for instance,  $k=1$ (recall $T_{1}(s)+G_{1}(s)=s$). 
\medskip 

Now, let $f\in L^m(\Omega)$ with $m> \frac{N}{p}$. It suffices to observe that in  \eqref{pbplapapprox} it is not restrictive to choose $\displaystyle  n>\sup_{s\in [k_{0},\infty)} h(s)$, that is one can possibly  truncate only near the singularity. Hence, if we multiply \eqref{pbplapapprox} by $G_{k}(u_{n})$ one readily has
$$
\into |\nabla G_{k}(u_{n})|^{p}\leq \sup_{s\in[k_{0},\infty)} h(s) \into f G_{k}(u_{n})\,,
$$
and one can apply standard Stampacchia's method in order to get the boundedness of $u_{p}$. 
\end{proof}
\begin{remark}
	First of all observe that, if $\gamma\leq 1$ (i.e. $\sigma=1$),  then  \eqref{stimaup}  holds for $m=(p^{\ast})'$ and one has that the solutions are globally $W^{1,p}_{0}(\Omega)$ no matter of the behavior of $h$ at infinity.  
	
	What actually holds for any $\gamma>0$, is that   $q=\frac{p^{*}(p-1)}{m'p-p^{*}}= 1$ if $m=(p^*)'$ and so \eqref{stimagkreg} is in  continuity with  the result of Theorem \ref{existence_p}  (i.e. $G_k(u_p) \in W^{1,p}_0(\Omega)$).

	Also notice that $q\to \infty$ as $m \to \frac{N}{p}$, formally implying that every power of $G_k(u_p)$ stands in $W^{1,p}_0(\Omega)$. Observe that $ \frac{pN-N+N\sigma}{pN-N+p\sigma}<\frac{N}{p}< N$ for any $p>1$. In the following section we will consider data in $L^{N}(\Omega)$; hence,  we shall be backed  to  look for (bounded, in fact) solutions satisfying  $u_{p}^\frac{\sigma-1+p}{p}\in W^{1,p}_0(\Omega)$ (see \eqref{exbordo} below). 
	
\end{remark}

\subsection{Uniform estimates in the case $f\in L^{N}(\Omega)$}\label{4.4}

In this section we are going to prepare the proofs of Theorems \ref{existence}, \ref{regularity}, and \ref{uniqueness}.  As one would like to let $p\to 1^{+}$ we will need some \emph{uniform estimates} with respect to $p$.  In order to do that we introduce a family of test functions that behaves differently as $u_{p}\sim 0$ and $u_{p}\sim\infty$. 
For fixed $p>1$, we define the following auxiliary function
 	\begin{equation}\label{psi}
 	\psi_p(s):= 
 	\begin{cases}
 	s^{\sigma} k_0^{-\frac{(\sigma-1)(p-1)}{p}} \ \ \ &\text{if }s\le k_0, \\
 	s^{\frac{\sigma-1+p}{p}} &\text{if }s>k_0.
 	\end{cases}
 	\end{equation}
	Observe that $$\psi_p(s)\to s^{\sigma},\ \  \text{as}\ \ \  p\to 1^{+}.$$ 
 We also define $\Gamma_p (s) := \int_{0}^{s} \psi'^{\frac{1}{p}}_p(t) dt.$ For the sake of exposition, here and below we will tacitly understand that $N>1$. Actually, in the case $N=1$
 many straightforward simplifications will appear in the arguments where, with a little abuse of notation,  one can mostly  think of $\frac{N}{N-1}:=\infty$. 
 
 \medskip
 
 We have the following
\begin{lemma}\label{lemmapfisso}
	Let $h$ satisfy \eqref{h1} and \eqref{h2}, $0\le f\in L^N(\Omega)$ such that  $\dys||f||_{L^{N}(\Omega)}< \frac{1}{\mathcal{S}_1 h(\infty)}$,  and let  $u_p$ be the solution to \eqref{pbplap} found in Theorem \ref{existence_p}. Then   there exists $p_0>1$ such that for any  $p\in (1,p_0)$ one has  
	\begin{equation}\label{exstima}
		 ||u_p||_{W^{1,p}(\omega)}\le C(\mathcal{S}_1, h(\infty), ||f||_{L^{N}(\Omega)}, \omega) \ \ \ \text{for all }\omega\subset\subset\Omega\bk\,,
	\end{equation}
	and	
	\begin{equation}\label{exbordo}
		||u_p^{\frac{\sigma-1+p}{p}}||_{W^{1,p}_0(\Omega)} \le C(\mathcal{S}_1, \sup_{s\in[k_{0},\infty)}h(s), ||f||_{L^{N}(\Omega)},c_1,  |\Omega|).
	\end{equation}
Moreover, there exists $\overline k>0$ such that, for any $k\geq \overline k$
	\begin{equation}\label{exubounded}
		||G_k(u_p)||_{L^{\frac{N}{N-1}}(\Omega)} \le \frac{p-1}{p} |\Omega|C(\mathcal{S}_1, h(\infty), ||f||_{L^{N}(\Omega)}, \overline k) \,.
	\end{equation}

	Finally,  
	\begin{equation}\label{dispotenza}
		\int_{\Omega} |\nabla \Gamma_p (u_p)|^p \le  \int_{\Omega}  h(u_{p})f \psi_{p}(u_{p})\,.
	\end{equation}
\end{lemma}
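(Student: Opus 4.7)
The plan is to derive all four estimates from the approximation scheme \eqref{pbplapapprox}: the $u_n\in W^{1,p}_0(\Omega)$ converge a.e.\ (and $\nabla u_n\to\nabla u_p$ a.e.) to $u_p$, and by Remark \ref{remtest} any element of $W^{1,p}(\Omega)$ with compact support is admissible as a test function, which, by density, covers suitable truncations of Lipschitz compositions $\phi(u_n)$ vanishing at the origin.

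I would first establish \eqref{dispotenza} by testing \eqref{pbplapapprox} with the bounded Lipschitz function $\psi_p(T_M(u_n))$. The chain rule and the identity $\Gamma_p'(s)^p=\psi_p'(s)$ give
\[
\int_\Omega \chi_{\{u_n<M\}}\,|\nabla\Gamma_p(u_n)|^p \,=\, \int_\Omega h_n(u_n)\,f\,\psi_p(T_M(u_n)).
\]
Monotone convergence as $M\to\infty$ removes the truncation; then Fatou on the left as $n\to\infty$ (using the a.e.\ convergence of $\nabla u_n$) together with Fatou/dominated convergence on the right (using $h_n(u_n)\to h(u_p)$ a.e.\ and $f>0$) yields \eqref{dispotenza}.

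For \eqref{exbordo} I would independently test \eqref{pbplapapprox} with $T_M(u_n)^{\sigma}$ and pass to the limit $M,n\to\infty$ to obtain
\[
\sigma\Bigl(\tfrac{p}{\sigma-1+p}\Bigr)^{p}\int_\Omega |\nabla u_p^{(\sigma-1+p)/p}|^{p} \,\leq\, \int_\Omega h(u_p)\,f\,u_p^{\sigma}.
\]
Splitting the right-hand side at level $k_0$: on $\{u_p\leq k_0\}$, by \eqref{h1} the integrand $h(u_p)u_p^\sigma\leq c_1 u_p^{\sigma-\gamma}$ is uniformly bounded, contributing at most $C\|f\|_{L^1}$; on $\{u_p>k_0\}$, $h(u_p)\leq H_0:=\sup_{s\geq k_0}h(s)$, which by \eqref{h2} can be made as close to $h(\infty)$ as desired by enlarging $k_0$. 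For the tail, H\"older's inequality together with the continuous embedding $L^{p^\ast\alpha}(\Omega)\hookrightarrow L^{N\sigma/(N-1)}(\Omega)$ (with constant $|\Omega|^{O(p-1)}$, both exponents coinciding at $p=1$) and Sobolev's inequality applied to $u_p^{(\sigma-1+p)/p}$ give, with $\alpha:=(\sigma-1+p)/p$,
\[
\int_\Omega f u_p^\sigma \,\leq\, \|f\|_{L^N}\,|\Omega|^{O(p-1)}\,\mathcal{S}_p^{\sigma/\alpha}\,\|\nabla u_p^{(\sigma-1+p)/p}\|_{L^p}^{\sigma/\alpha}.
\]
Setting $X:=\|\nabla u_p^{(\sigma-1+p)/p}\|_{L^p}$ one arrives at $X^p\leq C_1+C_2\,X^{\sigma/\alpha}$, where $\sigma/\alpha=\sigma p/(\sigma-1+p)<p$ for $p>1$ and $C_2\to \mathcal{S}_1 h(\infty)\|f\|_{L^N}<1$ as $p\to 1^+$. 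For $X\geq 1$, this rewrites as $X^p(1-C_2 X^{\sigma/\alpha-p})\leq C_1$, and since $X^{\sigma/\alpha-p}\leq 1$, the hypothesis $\mathcal{S}_1 h(\infty)\|f\|_{L^N}<1$ ensures $C_2<1$ uniformly for $p\in(1,p_0)$, delivering \eqref{exbordo}.

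For \eqref{exubounded}, I would test \eqref{pweakdef} with $G_k(u_p)$: since $h(u_p)\leq h(\infty)+\epsilon_k$ on $\{u_p>k\}$, H\"older, the embedding $L^{p^\ast}\hookrightarrow L^{N/(N-1)}$ with constant $|\Omega|^{(p-1)/p}$ (following from $1/N'-1/p^\ast=(p-1)/p$) and Sobolev's inequality give
\[
\|\nabla G_k(u_p)\|_{L^p}^{p-1}\,\leq\,A_{p,k}\,:=\,\mathcal{S}_p\,(h(\infty)+\epsilon_k)\,\|f\|_{L^N}\,|\Omega|^{(p-1)/p}.
\]
Choosing $\bar k$ large and $p_0$ close to $1$ so that $A_{p,k}\leq A_0<1$ for $k\geq\bar k$ and $p\in(1,p_0)$, the exponential decay $A_{p,k}^{1/(p-1)}=o(p-1)$ as $p\to 1^+$ combined with the same embedding gives \eqref{exubounded}. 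Finally, \eqref{exstima} follows by a cutoff argument as in Lemma \ref{priorilemma}, testing with $T_k(u_n)\phi^p$ for $\phi$ a cutoff of $\omega\subset\subset\Omega$ and using \eqref{exbordo} and \eqref{exubounded} to control the various pieces.

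The main obstacle is the absorption step in \eqref{exbordo}: the bounding power $\sigma/\alpha$ is strictly less than $p$ for $p>1$ but tends to $1$ as $p\to 1^+$, exactly matching the exponent on the LHS in the limit; no slack is available at $p=1$. The strict smallness $\mathcal{S}_1 h(\infty)\|f\|_{L^N}<1$ is precisely what provides the uniform room needed to close the absorption for $p\in(1,p_0)$, and, in tandem, drives the corresponding absorption in \eqref{exubounded}.
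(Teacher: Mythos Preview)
Your overall strategy matches the paper's: test the approximating problems \eqref{pbplapapprox} with suitable compositions, obtain uniform bounds, and pass to the limit. Two execution issues, however, both stem from working at level $u_p$ where the paper stays at level $u_n$.

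For \eqref{exbordo} and \eqref{dispotenza} you test at level $n$, then pass to the limit in $n$ to get an inequality \emph{for $u_p$} before performing the splitting and absorption. This does not close: Fatou on the right-hand side yields $\int h(u_p)fu_p^\sigma\le\liminf_n\int h_n(u_n)fu_n^\sigma$, which is the wrong direction, and there is no evident domination for the tail term. The paper instead carries out the three-zone split ($\{u_n\le k_0\}$, $\{k_0<u_n<k_1\}$, $\{u_n\ge k_1\}$) and the absorption \emph{at level $n$}, obtaining $\|u_n^{(\sigma-1+p)/p}\|_{W^{1,p}_0}\le C$ uniformly, then concludes by weak lower semicontinuity. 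For \eqref{dispotenza} the right-hand side limit is then identified zone by zone: dominated convergence on $\{u_n\le k_0\}$, and on $\{u_n>k_0\}$ the strong $L^1$ convergence of $fu_n^{(\sigma-1+p)/p}$ (coming from the $W^{1,p}_0$ bound just obtained) paired with the $\ast$-weak $L^\infty$ convergence of $h_n(u_n)\chi_{\{u_n>k_0\}}$. Your two-zone split also needs adjustment: $k_0$ is fixed by \eqref{h1} and cannot be ``enlarged''; what you need is a second threshold $k_1>k_0$, the middle zone contributing a constant depending on $k_1$ and $\max_{[k_0,k_1]}h$, and only on $\{u_n\ge k_1\}$ does one invoke $h\le h(\infty)+\epsilon_{k_1}$ with $k_1$ chosen large. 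Once this is fixed, your absorption via $X^p\le C_1+C_2X^{\sigma p/(\sigma-1+p)}$ with $C_2<1$ and $X\ge1$ is a clean alternative to the paper's Young-inequality step.

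For \eqref{exubounded} you test \eqref{pweakdef} with $G_k(u_p)$, but $G_k(u_p)$ has no compact support, and Remark \ref{remtest} only extends the admissible test functions to $W^{1,p}$ with compact support; since $h(u_p)f$ and $|\nabla u_p|^{p-1}$ are a priori only locally integrable, the extension to $W^{1,p}_0$ test functions is not automatic. The paper tests \eqref{pbplapapprox} with $G_k(u_n)$ and uses the Young inequality $\int|\nabla G_k(u_n)|\le\frac{1}{p}\int|\nabla G_k(u_n)|^p+\frac{p-1}{p}|\Omega|$ to produce the factor $\frac{p-1}{p}|\Omega|$, then Fatou. Your exponential-decay observation $A_{p,k}^{1/(p-1)}=o(p-1)$ is correct and actually sharper, but it must be run for $u_n$ rather than $u_p$.
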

\begin{proof}

We divide the proof in few steps. 

\medskip 

{\it Proof of  \eqref{exstima}.}	We consider the solutions $u_{n}$ to \eqref{pbplapapprox}. We apply \eqref{priorigk} to $u_{n}$ and we apply the H\"older inequality to obtain  
	\begin{align*}
	\int_{\Omega}|\nabla G_k(u_{n})|^{p} &\leq  ((h(\infty)+\epsilon_k)||f||_{L^{N}(\Omega)}\mathcal{S}_p)^\frac{p}{p-1} |
	\Omega|. 
	\end{align*}
	Recalling that  $||f||_{L^{N}(\Omega)}< \frac{1}{\mathcal{S}_1 h(\infty)}$, we  fix a constant $c$ such that $h(\infty)||f||_{L^{N}(\Omega)} \mathcal{S}_1<c<1$. By continuity,  there exist $p_0$ sufficiently near to $1^{+}$ and $\overline{k}$ large enough  such that,  $$(h(\infty)+\epsilon_k)||f||_{L^{N}(\Omega)} \mathcal{S}_p<c<1,$$ for any $p\in (1,p_0)$ and $k\geq \overline{k}$. 
	In particular, 
	\begin{align}\label{priorigkN}
	\int_{\Omega}|\nabla G_{\overline k}(u_{n})|^{p} \le c^{\frac{p-1}{p}}|\Omega|.
	\end{align}							
	
	To prove \eqref{exstima} we reason as in the proof of \eqref{prioritk4} on $T_{\overline k}(u_{n})$, and, again by H\"older's inequality, we obtain
	
	\begin{align*}\label{prioritk5}
	\int_{\omega}|\nabla T_{\overline k}(u_{n})|^{p}&\le  4 {\overline k}^p c_{\omega} [(2(p-1))^{p-1}+1]|\Omega|
	\\ \notag
	&+ 4(p-1) [(h(\infty)+
	\epsilon_{\overline k})||f||_{L^{N}(\Omega)} \mathcal{S}_p ]^{\frac{p}{p-1}}|
	\Omega|.
	\end{align*}
	Now observe that, thanks to the choice of $p_{0}$ and $\overline{k}$ all terms at the right hand side of the previous expression are bounded uniformly with respect to $p\in (1,p_0)$. This fact together with  \eqref{priorigkN}, and using weak lower semicontinuity,   shows that  \eqref{exstima} holds.
	
	\medskip 
	
	{\it Global estimate \eqref{exbordo} }.  In order to show \eqref{exbordo} we take $u_{n}^\sigma$ as a test function in \eqref{prioriweakdef} obtaining (for $k_1>k_0$)
\begin{equation}
	\begin{aligned}\label{stimapotenza}
	\displaystyle  \left(\frac{p}{\sigma-1+p}\right)^p\sigma\int_{\Omega}|\nabla u_{n}^{\frac{\sigma-1+p}{p}}|^p &= \int_{\Omega}h_{n}(u_{n})fu_{n}^\sigma \le c_{1}k_0^{\sigma-\gamma}\int_{\{u_{n}\le k_0\}}f
	\\
	& + \left(\max_{s\in [k_0,k_1]}h (s)\right) \ k_1^\sigma \int_{\{k_0<u_{n}< k_1\}} f 
	\\
	&+  (h(\infty)+\epsilon_{k_1})\int_{\{u_{n}\ge k_1\}}fu_{n}^\sigma.
	\end{aligned}
\end{equation}
	We estimate the last term in the right hand side of \eqref{stimapotenza}.  Observing that $\frac{\sigma N}{N-1}<\frac{(\sigma-1+p)p^*}{p}$, one can apply H\"older's inequality and then Young's inequality to get
	\begin{align*}
	(h(\infty)+\epsilon_{k_1})\int_{\{u_{n}\ge k_1\}}fu_{n}^\sigma &\le  (h(\infty)+\epsilon_{k_1})||f||_{L^N(\Omega)} \left(\int_{\{u_{n}\ge k_1\}}u_{n}^{\frac{\sigma N}{N-1}}\right)^{\frac{N-1}{N}} 
	\\
	&\le (h(\infty)+\epsilon_{k_1})||f||_{L^N(\Omega)} \left(\int_{\Omega}u_{n}^{\frac{(\sigma-1+p)p^*}{p}}\right)^{\frac{p\sigma}{(\sigma-1+p)p^*}} |\Omega|^{\frac{(p-1)(\sigma -1+N)}{N(\sigma-1+p)}} 
	\\ 
	&\le (h(\infty)+\epsilon_{k_1})||f||_{L^N(\Omega)} \left(\int_{\Omega}u_{n}^{\frac{(\sigma-1+p)p^*}{p}}\right)^{\frac{p}{p^*}}
	\\&+  (h(\infty)+\epsilon_{k_1})||f||_{L^N(\Omega)} |\Omega|^{\frac{\sigma -1+N}{N}} .	 	
	\end{align*}
	Concerning  the left hand side of \eqref{stimapotenza} we apply the Sobolev inequality and we  have
	\begin{align*}
	\displaystyle  \left(\frac{p}{\sigma-1+p}\right)^p\sigma\int_{\Omega}|\nabla u_{n}^{\frac{\sigma-1+p}{p}}|^p \ge \displaystyle  \left(\frac{p}{\sigma-1+p}\right)^p\frac{\sigma}{ \mathcal{S}_p^{p}}\left(\int_{\Omega}u_{n}^{\frac{(\sigma-1+p)p^*}{p}}\right)^{\frac{p}{p^*}} \,.
	\end{align*}	 	
	Collecting the previous two inequalities gathered with  \eqref{stimapotenza} we deduce 
	\begin{align}\label{stimapotenza2}
	\displaystyle  \left(\left(\frac{p}{\sigma-1+p}\right)^p\frac{\sigma}{ \mathcal{S}_p^{p}} - (h(\infty)+\epsilon_{k_1})||f||_{L^N(\Omega)}  \right) \left(\int_{\Omega}u_{n}^{\frac{(\sigma-1+p)p^*}{p}}\right)^{\frac{p}{p^*}}\le C\,.
	\end{align}
	Now, since $\displaystyle ||f||_{L^N(\Omega)}< \frac{1}{\mathcal{S}_1h(\infty)}$, for $p$ sufficiently near to $1$ and  $k_1$ sufficiently large, one has that   $$\displaystyle \left(\left(\frac{p}{\sigma-1+p}\right)^p\frac{\sigma}{ \mathcal{S}_p^{p}} - (h(\infty)+\epsilon_{k_1})||f||_{L^N(\Omega)}  \right)>c>0\,,$$ 
	for some constant $c$ not depending on both $p<p_{0}$ and $k_{1}$. 
	Therefore, using  \eqref{stimapotenza2}  we deduce
	
		\begin{equation}\label{exbordon}
		||u_n^{\frac{\sigma-1+p}{p}}||_{W^{1,p}_0(\Omega)} \le C(\mathcal{S}_1, \sup_{s\in[k_{0},\infty)}h(s), ||f||_{L^{N}(\Omega)},c_1,  |\Omega|)\,,
	\end{equation} from which \eqref{exbordo} follows  by weak lower semicontinuity.

	\medskip 
				
	{\it Proof of estimate  \eqref{exubounded}}.  	 To show \eqref{exubounded}  we take $G_k(u_{n})$ as a test function in \eqref{pbplapapprox} obtaining
	\begin{equation*}
	\int_{\Omega} |\nabla G_k(u_{n})|^p \le (h(\infty)+\epsilon_k) \int_{\Omega} f G_k(u_{n}).
	\end{equation*}
	Moreover, by the Sobolev, the Young and the H\"older inequalities, we have (recall $p>1$)
	\begin{align*}
	\frac{1}{\mathcal{S}_1} \left(\int_{\Omega}G_k^{\frac{N}{N-1}}(u_{n})\right)^\frac{N-1}{N} &\le \int_{\Omega} |\nabla G_k(u_{n})| \le \frac{1}{p} \int_{\Omega} |\nabla G_k(u_{n})|^p  + \frac{p-1}{p} |\Omega| 
	\\
	&\le (h(\infty)+\epsilon_k) \int_{\Omega} f G_k(u_{n}) + \frac{p-1}{p} |\Omega| 
	\\
	&\le (h(\infty)+\epsilon_k) ||f||_{L^N(\Omega)}\left(\int_{\Omega}G_k^{\frac{N}{N-1}}(u_{n})\right)^\frac{N-1}{N} + \frac{p-1}{p} |\Omega| ,   
	\end{align*}
	that implies
	\begin{equation*}
	\left(\frac{1}{\mathcal{S}_1} - (h(\infty)+\epsilon_k) ||f||_{L^N(\Omega)} \right) ||G_k(u_{n})||_{L^{\frac{N}{N-1}}(\Omega)} \le \frac{p-1}{p} |\Omega|.
	\end{equation*}
	As before, recalling  $||f||_{L^N(\Omega)}< \frac{1}{\mathcal{S}_1 h(\infty)}$,  it  is possible to pick $\overline k$ large  enough so that $\frac{1}{\mathcal{S}_1} - (h(\infty)+\epsilon_{k}) ||f||_{L^N(\Omega)}>c>0$, for any $k\geq \overline k$, where $c$ only depends on $\mathcal{S}_1, h(\infty)$, and  $||f||_{L^{N}(\Omega)}$. Then \eqref{exubounded} follows by Fatou's lemma.

	\medskip 				
				{\it Proof of \eqref{dispotenza}}. Observe that by  \eqref{exbordon} one readily gets, by compact embeddings,  that  $u_{n}^{\frac{\sigma-1+p}{p}}$ strongly converges to $u_{p}^{\frac{\sigma-1+p}{p}}$ in $L^{\frac{N}{N-1}}(\Omega)$. 
				As already done, by  possibly decreasing its value we assume, without loss of generality,  that $k_0\not\in \{\eta: |\{u_p=\eta\}|>0\}$.  
				Recalling \eqref{psi},  we consider  $\psi_{p}(u_n)$ as a test function for \eqref{pbplapapprox} getting 
				$$
				\int_{\Omega} |\nabla \Gamma_p (u_n)|^p \bk= \int_{\Omega}  h_{n}(u_{n})f \psi_{p}(u_{n})\leq c_{1} k_0^{\frac{\sigma+p-1 -\gamma p}{p}} \int_{\{u_n\leq  k_0\}} f +\sup_{s\in [k_{0},\infty)} h(s)\int_{\{u_n> k_0\}} f u_{n}^{\frac{\sigma-1+p}{p}}\leq C\,.
				$$
				
				In particular $\Gamma_{p}(u_{n})$ is bounded in $W^{1,p}_{0}(\Omega)$ with respect to $n$ and we can use weak lower semicontinuity of the norm in order to get 
				$$
\int_{\Omega} |\nabla \Gamma_p (u_p)|^p \le \liminf_{n\to\infty} \int_{\Omega}  h_n(u_{n})f \psi_{p}(u_{n})				
				$$
	What is left is to identify the  limit, as $n$ goes to infinity, of the right hand side of the previous expression.

We write
$$
h_n(u_n)f\psi_{p}(u_n)=h_n(u_n)f\psi_{p}(u_n)\chi_{\{u_n\le k_0\}}+h_n(u_n)fu_{n}^{\frac{\sigma-1+p}{p}}\chi_{\{u_n> k_0\}}\,;
$$
as
$$
h_n(u_n)f\psi_{p}(u_n)\chi_{\{u_n\le k_0\}}\leq c_{1} k_0^{\frac{\sigma+p-1 -\gamma p}{p}}f, 
$$
we can pass to the limit in the first term by dominated convergence. On the other hand, as  	$fu_{n}^{\frac{\sigma-1+p}{p}}$ strongly converges in $L^{1}(\Omega)$ to $fu_{p}^{\frac{\sigma-1+p}{p}}$ and  $h_n(u_n)\chi_{\{u_n> k_0\}}$		converges to $h (u_p)\chi_{\{u_p> k_0\}}$ both a. e. and $\ast$-weak in $L^{\infty}(\Omega)$  we get 
$$
\lim_{n\to\infty} \int_{\Omega}  h_n(u_{n})f \psi_{p}(u_{n})=  \int_{\Omega}  h(u_{p})f \psi_{p}(u_{p})\,,
$$
and so \eqref{dispotenza}. 
						
				\end{proof}	
	
		\begin{remark}
			Let us note that the proof of Theorem \ref{existence_p} keeps working  even for a problems as
			\begin{equation*}\label{pbplapgen}
			\begin{cases}
			\displaystyle - \operatorname{div}(a(x,\nabla u_p))= h(u_p)f &  \text{in}\, \Omega, \\
			u_p=0 & \text{on}\ \partial \Omega,
			\end{cases}
			\end{equation*}
			where $\displaystyle{a(x,\xi):\Omega\times\mathbb{R}^{N} \to \mathbb{R}^{N}}$ is a classical Leray-Lions operator satisfying the following structure conditions
			\begin{align*}
			&a(x,\xi)\cdot\xi\ge \alpha|\xi|^{p}, \ \ \ \alpha>0,
			\\
			&|a(x,\xi)|\le \beta|\xi|^{p-1}, \ \ \ \beta>0,
			\\
			&(a(x,\xi) - a(x,\xi^{'} )) \cdot (\xi -\xi^{'}) > 0,	
			\end{align*}
			for every $\xi\neq\xi^{'}$ in $\mathbb{R}^N$ and for almost every $x$ in $\Omega$.
		\end{remark}

\section{The limit as $p\to 1^+$ for a positive $f$}
\setcounter{equation}{0}

		\noindent In this section we prove our main results concerning the case $p=1$, namely Theorems \ref{existence}, \ref{regularity} and \ref{uniqueness}.  As before, throughout this section,  $h$ satisfies \eqref{h1} and \eqref{h2} and $f\in L^N(\Omega)$ is positive. The proofs of Theorems \ref{existence} and \ref{regularity} will be split  into those of  various lemmata.

	Preliminarily, let us recall that	the solutions $u_p$ found in  Theorem \ref{existence_p} satisfy \eqref{exstima}. This implies that $u_p$ is locally uniformly  bounded in $BV(\Omega)$  with respect to $p$ \bk. Indeed 
		for every $\omega\subset\subset \Omega$
				\begin{equation*}
				\begin{aligned}\label{stimabvloc}
					\int_{\omega} |\nabla u_p| \le \frac{1}{p} \int_{\omega} |\nabla u_p|^p  + \frac{p-1}{p} |\omega|\le C \,.		   
				\end{aligned}	
				\end{equation*}		 
		Also recalling \eqref{exubounded},  by compactness in $BV$, and a  standard diagonal argument,  we deduce the existence of a  function $u\in BV_{\rm{loc}}(\Omega)$ such that (up to not relabeled subsequences)  
				\begin{equation}
				\begin{array}{l}
					u_p \to u \text{ in  $L^q(\Omega)$\bk \ with $q<\frac{N}{N-1}$  and a.e. in $\Omega$},  
					\\ \\ \dys  \text{$\nabla u_p \to D u$ locally $\ast$-weakly as measures\,,} 
				\end{array}\label{up}\end{equation}
				as $p\to 1^+$. First we have the following
				 \begin{lemma}\label{lemmaregolarita}
				 	Let $0 < f\in L^{N}(\Omega)$ such that $\displaystyle ||f||_{L^N(\Omega)}< \frac{1}{\mathcal{S}_1 h(\infty)}$ with $h$ satisfying both \eqref{h1} and \eqref{h2}. Then $u$, defined by \eqref{up}, belongs to $L^\infty(\Omega)$. Moreover $u^\sigma \in BV(\Omega)$. 
				 \end{lemma}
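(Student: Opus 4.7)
Both conclusions are designed to be direct consequences of the uniform (in $p$) estimates collected in Lemma \ref{lemmapfisso}. The strategy is to pass to the limit as $p\to 1^+$: the $L^\infty$-bound will come out of the vanishing $L^{N/(N-1)}$-estimate for the far field $G_{\overline k}(u_p)$, while $u^\sigma\in BV(\Omega)$ will follow by $BV$-compactness applied to $u_p^{(\sigma-1+p)/p}$, combined with the a.e.\ convergence of $u_p$ to $u$ provided by \eqref{up}.

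For the first claim, I would start from \eqref{exubounded}, which reads
$\|G_{\overline k}(u_p)\|_{L^{N/(N-1)}(\Omega)} \le \tfrac{p-1}{p}\,|\Omega|\,C$,
so the right-hand side tends to zero as $p\to 1^+$. Since $u_p\to u$ a.e.\ by \eqref{up} and $G_{\overline k}$ is continuous, $G_{\overline k}(u_p)\to G_{\overline k}(u)$ a.e.\ in $\Omega$. Applying Fatou's lemma gives $\|G_{\overline k}(u)\|_{L^{N/(N-1)}(\Omega)}=0$, hence $u(x)\le \overline k$ for a.e.\ $x\in\Omega$, which proves $u\in L^\infty(\Omega)$.

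For the $BV$ assertion, I would use \eqref{exbordo} to deduce that $u_p^{(\sigma-1+p)/p}$ is uniformly bounded in $W^{1,p}_0(\Omega)$ for $p$ close to $1$. Applying the elementary Young-type inequality $t\le \tfrac{1}{p}t^p+\tfrac{p-1}{p}$ pointwise to $|\nabla u_p^{(\sigma-1+p)/p}|$ yields
\[
\int_\Omega |\nabla u_p^{(\sigma-1+p)/p}|\;\le\;\frac{1}{p}\int_\Omega |\nabla u_p^{(\sigma-1+p)/p}|^p+\frac{p-1}{p}|\Omega|\;\le\;C,
\]
uniformly in $p$. Together with the $L^{p^{*}}(\Omega)$ bound that $W^{1,p}_0$ provides via Sobolev (which gives an $L^1(\Omega)$ bound uniform in $p$ for $p$ close to $1$, since $p^{*}\to N/(N-1)>1$), this shows that $\{u_p^{(\sigma-1+p)/p}\}$ is bounded in $BV(\Omega)$. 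By $BV$-compactness, extracting a further subsequence if needed, one has $u_p^{(\sigma-1+p)/p}\to w$ strongly in $L^1(\Omega)$ for some $w\in BV(\Omega)$. Since $u_p\to u$ a.e.\ and $(\sigma-1+p)/p\to \sigma>0$, the continuity of the power function gives $u_p^{(\sigma-1+p)/p}\to u^\sigma$ a.e. in $\Omega$; uniqueness of the limit then forces $w=u^\sigma$, so that $u^\sigma\in BV(\Omega)$ as claimed.

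The only slightly delicate point is the presence of the $p$-dependent exponent $(\sigma-1+p)/p$, which prevents a direct application of lower semicontinuity of a fixed functional. This is overcome by first passing to a uniform bound on $\int_\Omega |\nabla u_p^{(\sigma-1+p)/p}|$ (via Young's inequality) and then identifying the $BV$-limit through the a.e.\ convergence of $u_p$, rather than trying to commute limits inside the power.
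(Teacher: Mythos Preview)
Your proof is correct and follows essentially the same route as the paper: Fatou on \eqref{exubounded} for the $L^\infty$ bound, and Young's inequality applied to \eqref{exbordo} to obtain a uniform $BV(\Omega)$ bound on $u_p^{(\sigma-1+p)/p}$, followed by compactness and identification of the limit via the a.e.\ convergence of $u_p$. Your closing remark on why one works with the $BV$-limit $w$ rather than attempting lower semicontinuity of a $p$-dependent functional is exactly the point the paper handles implicitly.
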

	    \begin{proof}	
		Using the Fatou lemma in \eqref{exubounded} we have that there exists $\overline k$ such that for every $k\ge \overline k$ 
		$$||G_k(u)||_{L^{\frac{N}{N-1}}(\Omega)}=0,$$
		that is  $0\le u \le k$ a.e. in $\Omega$.
	Moreover it follows from the Young inequality and from \eqref{exbordo} that
				\begin{align*}\label{stimabv}
					\displaystyle   \int_{\Omega}|\nabla u_p^{\frac{\sigma-1+p}{p}}| \le \frac{1}{p}\int_{\Omega}|\nabla u_p^{\frac{\sigma-1+p}{p}}|^p + \frac{1}{p^{'}}|\Omega|\le C,
				\end{align*} 
		for some constant $C$ not depending on $p$. This implies that $u_{p}^{\frac{\sigma-1+p}{p}}$ is bounded in $BV(\Omega)$. Then there exists $w\in BV(\Omega)$ such that 
	  	$u_{p}^{\frac{\sigma-1+p}{p}}$ converges to $w$ in $L^q(\Omega)$ for $q<\frac{N}{N-1}$ and a.e. in $\Omega$,  and $\nabla u_{p}^{\frac{\sigma-1+p}{p}}$ converges $D w$ $\ast$-weakly as measures. As $u_p$ converges a.e. to $u$ this implies that $w=u^\sigma$ which concludes the proof.  We stress that we have just shown that 
	  	\begin{equation}\label{convpotenza}
	  		u_{p}^{\frac{\sigma-1+p}{p}} \to u^\sigma \text{ in } L^q(\Omega), \ \ \text{ for every }q<\frac{N}{N-1}.
	  	\end{equation} 
		\end{proof}
	 	 
The following Lemma shows the existence (and the identification) of the vector  field $z$. 
				\begin{lemma}\label{lemma_campoz}
		 	 	 	Under the same assumptions of Lemma \ref{lemmaregolarita},  there exists $z\in \mathcal{D}\mathcal{M}^\infty_{\rm{loc}}(\Omega)$ with $||z||_{\infty}\le 1$ such that 
		 	 	 	\begin{equation}\label{lemma_l1loc}
		 	 	 		h(u)f \in L^1_{\rm loc}(\Omega),
		 	 	 	\end{equation}
			 	 	 	\begin{equation}\label{lemma_distr}
			 	 	 		-\operatorname{div}z = h(u)f \text{  in  } \mathcal{D'}(\Omega).
			 	 	 	\end{equation}
		 	 	 	Moreover
			 	 	 	\begin{equation}\label{lemma_identificarez}
				 	 	 	(z,Du)=|Du| \ \ \ \ \text{as measures in } \Omega.
			 	 	  	\end{equation}		 	 	 		 	 	 	
			 	 \end{lemma}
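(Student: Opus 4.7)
The plan is to define $z$ as a weak limit of the natural vector fields arising from the $p$-Laplacian approximation, then pass to the limit in the distributional equation, and finally identify the Anzellotti pairing via a sharp energy inequality combined with formula \eqref{radonnik}. For the construction of $z$, set $z_p := |\nabla u_p|^{p-2}\nabla u_p$. For any $\omega\subset\subset\Omega$ and any $q>1$, H\"older's inequality and the uniform local estimate \eqref{exstima} give
$$\int_\omega |z_p|^q = \int_\omega |\nabla u_p|^{q(p-1)} \le \left(\int_\omega|\nabla u_p|^p\right)^{q(p-1)/p}|\omega|^{1-q(p-1)/p},$$
uniformly bounded in $p$ close to $1$. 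A diagonal extraction yields $z\in L^q_{\rm loc}(\Omega;\R^N)$ for every $q<\infty$ such that $z_p\rightharpoonup z$ in $L^q_{\rm loc}$; weak lower semicontinuity then forces $\|z\|_\infty\le 1$ upon letting $q\to\infty$.

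For \eqref{lemma_l1loc} and \eqref{lemma_distr} one passes to the limit in
$$\int_\Omega z_p\cdot\nabla\varphi = \int_\Omega h(u_p)f\varphi,\qquad \varphi\in C^1_c(\Omega),$$
where the left-hand side converges by weak convergence. For the right-hand side, I would adapt the $V_\delta$-testing argument used in the proof of Theorem \ref{existence_p}: decomposing $\{u_p\le\delta\}\cup\{u_p>\delta\}$ and employing $V_\delta(u_p)\varphi$ as an auxiliary test function controls the singular contribution near $\{u=0\}$, while dominated convergence handles the rest since $h$ is bounded on $[\delta,\infty)$. Fatou's lemma applied to nonnegative $\varphi$ yields $h(u)f\in L^1_{\rm loc}(\Omega)$, and in the limit one obtains $-\operatorname{div} z = h(u)f$ in $\mathcal{D}'(\Omega)$; in particular $\operatorname{div} z\in L^1_{\rm loc}(\Omega)$ and $z\in\mathcal{D}\mathcal{M}^\infty_{\rm loc}(\Omega)$.

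For \eqref{lemma_identificarez} I would first establish $(z,Du^\sigma)=|Du^\sigma|$ and then transfer the identity to $|Du|$ via \eqref{radonnik}. Fixing $0\le\varphi\in C^1_c(\Omega)$ and testing the $p$-Laplacian equation against $\psi_p(u_p)\varphi$, using $|\nabla\Gamma_p(u_p)|^p = \psi_p'(u_p)|\nabla u_p|^p$ and Young's inequality $|\nabla\Gamma_p(u_p)|^p\ge p|\nabla\Gamma_p(u_p)|-(p-1)$, leads to
$$\int|\nabla\Gamma_p(u_p)|\varphi + \frac{1}{p}\int\psi_p(u_p)z_p\cdot\nabla\varphi \le \frac{1}{p}\int h(u_p)f\psi_p(u_p)\varphi + \frac{p-1}{p}\int\varphi.$$
Using $\Gamma_p(u_p),\psi_p(u_p)\to u^\sigma$ via \eqref{convpotenza}, BV lower semicontinuity on the left, and the analysis of the previous step on the right, the passage $p\to 1^+$ yields $\int\varphi\,d|Du^\sigma| + \int u^\sigma z\cdot\nabla\varphi \le \int u^\sigma h(u)f\varphi$. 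On the other hand, applying the Anzellotti definition \eqref{dist1} to $u^\sigma\in BV(\Omega)\cap L^\infty(\Omega)$ together with $-\operatorname{div} z = h(u)f$ and $(u^\sigma)^*=u^\sigma$ Lebesgue-a.e., one rewrites $\langle(z,Du^\sigma),\varphi\rangle = \int u^\sigma h(u)f\varphi - \int u^\sigma z\cdot\nabla\varphi$, whence $\int\varphi\,d|Du^\sigma|\le\int\varphi\,d(z,Du^\sigma)$; the reverse bound $(z,Du^\sigma)\le|Du^\sigma|$ coming from $\|z\|_\infty\le 1$ gives equality $(z,Du^\sigma)=|Du^\sigma|$. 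Finally, \eqref{radonnik} applied with the increasing function $\Lambda(s)=s^\sigma$ gives $\theta(z,Du)=\theta(z,Du^\sigma)$ for $|Du|$-a.e.~$x$; the BV chain rule (together with the fact that $D^au$ vanishes on $\{\tilde u=0\}$ since $u\ge 0$) shows that $|Du|$ and $|Du^\sigma|$ share the same null sets on the support of $Du$, so $\theta(z,Du^\sigma)=1$ transfers to $\theta(z,Du)=1$ $|Du|$-a.e., i.e.~\eqref{lemma_identificarez}.

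The main obstacles will be (a) the rigorous passage to the limit in the singular term $h(u_p)f$, which requires the $V_\delta$-testing technique combined with an analysis of $\{u_p=0\}$ versus $\{u_p>0\}$, and (b) bridging $(z,Du^\sigma)=|Du^\sigma|$ to $(z,Du)=|Du|$: the diffuse parts are handled by the standard BV chain rule, whereas the jump contribution crucially relies on the invariance of the Radon--Nikodym density of the Anzellotti pairing under increasing compositions (formula \eqref{radonnik}, based on the result from \cite{CDC} recalled in the preliminaries).
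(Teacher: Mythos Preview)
Your proposal is correct and follows essentially the same route as the paper: construct $z$ via local weak compactness of $|\nabla u_p|^{p-2}\nabla u_p$, pass to the limit in the distributional equation using Fatou together with the $V_\delta$-cutoff argument (exploiting $u>0$ since $f>0$), derive the sharp energy inequality by testing with a power-like function of $u_p$ times a cutoff, combine it with the Anzellotti pairing to get $(z,Du^\sigma)=|Du^\sigma|$, and then transfer to $(z,Du)=|Du|$ via \eqref{radonnik} with $\Lambda(s)=s^\sigma$. The only cosmetic differences are that the paper tests with $u_p^{(\sigma-1+p)/p}\varphi$ rather than $\psi_p(u_p)\varphi$, and it establishes the identity $-(u^\sigma)^*\operatorname{div} z = h(u)f\,u^\sigma$ through a separate mollification step before invoking Proposition~\ref{nuova}, whereas your direct use of \eqref{dist1} together with $\operatorname{div} z\in L^1_{\rm loc}(\Omega)$ is an equivalent shortcut.
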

				 \begin{proof}
				 
We divide the proof into few  steps. 

\medskip 				 
{\it Existence of the field $z$.} Recalling \eqref{exstima} we have, for $1\le q<p'$ and for any $\omega\subset\subset\Omega$
			\begin{align*}
					\int_\omega \left||\nabla u_p|^{p-2}\nabla u_p\right|^q = \int_\omega |\nabla u_p|^{q(p-1)}\le \left(\int_\omega |\nabla u_p|^p\right)^{\frac{q}{p'}}|\omega|^{1-\frac{q}{p'}}\leq C_\omega^{\frac{q}{p'}}|\Omega|^{1-\frac {q}{p'}}		
			\end{align*}
		and thus,
			\begin{equation}\label{stimaz}
				|||\nabla u_p|^{p-2}\nabla u_p||_{q,\omega}\le C_\omega^{\frac{1}{p'}}|\Omega|^{\frac1q-\frac 1{p'}}.
			\end{equation}
		The previous implies that $|\nabla u_p|^{p-2}\nabla u_p$ is bounded in $L^q(\omega,\mathbb{R}^N)$ with respect to $p$. Then there exists $
		z_q\in L^q(\omega,\mathbb{R}^N)$ such that
			$$	|\nabla u_p|^{p-2}\nabla u_p\rightharpoonup z_q\,,\quad\hbox{weakly in }L^q(\omega,\mathbb{R}^N).$$
		A standard  diagonal argument shows that there exists a unique vector field $z$ which is defined on $\Omega$ independently of $q$, such that 
			\begin{equation}\label{convergence2}
				|\nabla u_p|^{p-2}\nabla u_p\rightharpoonup z\,,\quad\hbox{weakly in } L^q(\Omega,\mathbb{R}^N)\,,\quad\forall q<\infty\,.
			\end{equation}
		Moreover, it follows from the lower semicontinuity in \eqref{stimaz} with respect to $p$ that
		$$		||z||_{q,\omega}\le |\Omega|^{\frac1q}\,,\quad\forall q<\infty\,$$
		and thus if $q\to \infty$ then $z\in L^\infty(\omega,\mathbb{R}^N)$ and $||z||_{\infty,\omega}\le 1$. Since this estimate is independent of $\omega$, then $||z||_{\infty} \le 1$.
		
		\medskip 
		
		{\it Distributional formulation.} We prove that \eqref{lemma_distr} holds. Note that by \eqref{convergence2} one can pass to limit with respect to $p$ in the left hand side of the distributional formulation of \eqref{pbplap}.  Concerning the right hand side, we first notice that,  if $h(0)<\infty$,  then one passes to the limit using the a.e. convergence of $u_{p}$ and the Lebesgue theorem. We then assume that $\displaystyle h(0)=\infty$. 
		
		Let $0\le \varphi\in C^1_c(\Omega)$, then applying the Young inequality in the distributional  formulation of \eqref{pbplap} it yields
			\begin{equation*}\label{stima_locale}
				\int_{\Omega} h(u_p)f\varphi \le \frac{1}{p'}\int_{supp \,\varphi} |\nabla u_p|^p + \frac{1}{p}\int_{\Omega} |\nabla\varphi|^{p} \le C \,, 
			\end{equation*}
		 and then the Fatou lemma  gives
			\begin{equation}\label{l1loc}
				\int_{\Omega} h(u)f\varphi \le \liminf_{p\to 1^{+}} \int_{\Omega} h(u_p)f\varphi \le C\,,  
			\end{equation}		
		which implies \eqref{lemma_l1loc} and that $u>0$ a.e. in $\Omega$ since $f>0$ a.e. in $\Omega$. 
		 Now, recalling Remark \ref{remtest}, we are able to take $V_\delta(u_p)\varphi$ as a test function in \eqref{pbplap} where $0\le \varphi\in C^1_c(\Omega)$  and $V_{\delta}$ is defined as in \eqref{V_delta},\bk we have 
				\begin{equation*}
					\int_{\Omega} |\nabla u_p|^p V^{'}_\delta(u_p)\varphi + \int_{\Omega} |\nabla u_p|^{p-2}\nabla u_p \cdot \nabla \varphi V_\delta(u_p)= \int_{\Omega} h(u_p)f V_\delta(u_p)\varphi,
			 	\end{equation*}
		which, since $V'_\delta(s)\le 0$, takes to
	 	 	 	\begin{equation*}\label{vicinozero1}
		 	 	 	\int_{\{u_p\le \delta\}}  h(u_p)f\varphi \le\int_{\Omega} |\nabla u_p|^{p-2}\nabla u_p \cdot \nabla \varphi V_\delta(u_p).
	 	 	 	\end{equation*}
 	 	 Hence  (recall \eqref{convergence2})  we have 
	 	 	 	\begin{equation*}\label{vicinozero2}
		 	 	 	\limsup_{p\to 1^+}\int_{\{u_p\le \delta\}}  h(u_p)f\varphi \le\int_{\Omega} z \cdot \nabla \varphi V_\delta(u).
	 	 	 	 \end{equation*}
		Then the Lebesgue  theorem implies that 
		 	 	\begin{equation}\label{vicinozero3}
			 	 	\lim_{\delta\to 0^+}\limsup_{p\to 1^+}\int_{\{u_p\le \delta\}}  h(u_p)f\varphi \le\int_{\{u=0\}} z \cdot \nabla \varphi=0,
		 	    \end{equation}		 	 	 	 
		since $u>0$ a.e. in $\Omega$.  Observe now that, by a standard density argument, \eqref{vicinozero3} can be shown to hold for any $\varphi\in C^1_c(\Omega)$\bk.
		
		In order  to pass to the limit (with respect to $p$) in  
		$$
		 \int_{\Omega} |\nabla u_p|^{p-2}\nabla u_p \cdot \nabla \varphi = \int_{\Omega}  h(u_p)f \varphi\,,
		 $$
		 we then let 
		 	 	\begin{equation*}\label{passaggioalimite}
			 	 	 \int_{\Omega}  h(u_p)f \varphi= \int_{\{u_p\le \delta\}}  h(u_p)f \varphi + \int_{\{u_p>\delta\}}  h(u_p)f \varphi\,, 
			 	\end{equation*}	
				and, using  the same agreement \bk on $\delta$ used in the proof of Theorem \ref{existence_p} (namely $\delta \notin \{\eta: |\{u=\eta\}|>0\}$),  and recalling \eqref{l1loc} we have, again by Lebesgue theorem that 	 	 	    
		$$
\lim_{\delta\to 0^+}\lim_{p\to1^{+}}\int_{\{u_p>\delta\}}  h(u_p)f \varphi=\int_{\Omega}  h(u)f \varphi\,,
		$$
		
		for any $\varphi \in C^{1}_{c}(\Omega)$. This fact, together with \eqref{vicinozero3} implies \eqref{lemma_distr}. Observe that this also implies that  $z\in\mathcal{D}\mathcal{M}^\infty_{\rm{loc}}(\Omega)$.
		
		\medskip

	 	{\it A variational identity.} In order to show \eqref{lemma_identificarez}, the first step consists in proving  the following
	 		 \begin{equation}\label{equ}
	 		 -(u^\sigma)^*\operatorname{div}z =  h(u)f u^\sigma \text{  in  } \mathcal{D'}(\Omega).
	 		 \end{equation}	
		To do that we re-adapt the  idea in \cite{DGS};  we  test \eqref{lemma_distr} with $(\rho_\epsilon\ast u^\sigma)\varphi$, where $\rho_\epsilon$ is a standard mollifier and $\varphi\in C^1_c(\Omega)$.
		One has 
			 \begin{equation}\label{eqsigma}
				 -\int_\Omega (\rho_\epsilon\ast u^\sigma)\varphi\, \operatorname{div}z =  \int_\Omega h(u)f(\rho_\epsilon\ast u^\sigma)\varphi.
			 \end{equation}
		Since $u\in L^\infty(\Omega)$ then we have $(u^\sigma)^*\le ||u^\sigma||_{L^\infty(\Omega)}$ $\mathcal H^{N-1}$-a.e. and so $\operatorname{div}z$-a.e. (recall that $\operatorname{div}z <\!\!<\mathcal H^{N-1}$). It is then standard (see for instance Propositions 3.64 (b) and  3.69 (b) of \cite{AFP}) that $\rho_\epsilon\ast u^\sigma \to (u^\sigma)^*   \quad\mathcal H^{N-1}$- a.e. and then $\operatorname{div}z$- a.e.
	  Therefore, we can pass to the limit in both sides of \eqref{eqsigma} by  dominated convergence theorem also using \eqref{l1loc} and the fact that
		$|\rho_\epsilon\ast u^\sigma|\leq ||u^\sigma||_{L^\infty(\Omega)}$. Then  \eqref{equ} holds.
		
		\medskip 
		
	{\it A first identification result.} A second ingredient for \eqref{lemma_identificarez} is the following identification identity  involving $u^{\sigma}$:
	\begin{equation*}\label{ii}
	|D u^{\sigma}|  =  (z, D u^\sigma) \ \ \text{as measures.}
	\end{equation*}
	  We use $u_p^\frac{\sigma-1+p}{p}\varphi$ as a test function in \eqref{pbplap} where $0\le \varphi \in C^1_c(\Omega)$,  and we get
	\begin{equation*}	
	\left(\frac{\sigma-1+p}{p}\right)\left(\frac{p^2}{\sigma-1+p^2}\right)^p\int_{\Omega} \varphi|\nabla u_p^{\frac{\sigma-1+p^2}{p^2}}|^{p} + \int_{\Omega} u_p^\frac{\sigma-1+p}{p}|\nabla u_p|^{p-2}\nabla u_p \cdot \nabla \varphi = \int_{\Omega}  h(u_p)f u_p^\frac{\sigma-1+p}{p} \varphi.	
	\end{equation*}
	Thus, by Young's inequality, we deduce
	\begin{equation}
	\begin{aligned}\label{z_1}
	\	\left(\frac{\sigma-1+p}{p}\right)^{\frac{1}{p}}\left(\frac{p^2}{\sigma-1+p^2}\right)\int_{\Omega} \varphi|\nabla u_p^{\frac{\sigma-1+p^2}{p^2}}| &+\int_{\Omega} u_p^\frac{\sigma-1+p}{p}|\nabla u_p|^{p-2}\nabla u_p \cdot \nabla \varphi 
	\\ 
	&\le \int_{\Omega}  h(u_p)f u_p^\frac{\sigma-1+p}{p} \varphi + \frac{p-1}{p}\int_{\Omega}\varphi.	
	\end{aligned}
	\end{equation} 
	By \eqref{convpotenza} we observe that  $u_p^\frac{\sigma-1+p}{p}$ converges to $u^\sigma$ in $L^r(\Omega)$ for $r<\frac{N}{N-1}$ and a.e. in $\Omega$ and (see \eqref{convergence2}) that  $|\nabla u_p|^{p-2}\nabla u_p$ converges weakly to $z$ in $L^q(\Omega,\mathbb{R}^N)$ for any  $q<\infty$;  this is sufficient to pass to the limit in the second term on the left hand side of the previous.

	Concerning  the right hand side of \eqref{z_1} we have, for  $\delta>0$,
	\begin{equation*}\label{stimaloc1}
		\int_{\Omega}  h(u_p)f u_p^\frac{\sigma-1+p}{p} \varphi= \int_{\{u_p\le \delta\}}  h(u_p)f u_p^{\frac{\sigma-1+p}{p}} \varphi+\int_{\{u_p>\delta\}}  h(u_p) fu_p^\frac{\sigma-1+p}{p} \varphi.
	\end{equation*}	
	On one hand, we treat the first term on the right hand side of the previous as follows
	\begin{equation*}\label{stimaloc12}
	\lim_{\delta\to 0^+}\limsup_{p\to 1^{+}}\int_{\{u_p\le \delta\}}  h(u_p)f u_p^{\frac{\sigma-1+p}{p}}\varphi \le \lim_{\delta\to 0^+}\limsup_{p\to 1^{+}}\delta^{\frac{\sigma-1+p}{p}}\int_{\{u_p\le \delta\}}  h(u_p)f\varphi = 0\,.
	\end{equation*}	
reasoning as for \eqref{vicinozero3}.  
	For the second term, using the usual convention for the choice of $\delta$,  the a.e. convergence and the weak convergence of $h(u_p) u_p^{\frac{\sigma-1+p}{p}}\chi_{\{u_p>\delta\}}$ to $h(u) u^{\sigma}\chi_{\{u>\delta\}}$ in $L^{\frac{N}{N-1}}(\Omega)$, with respect to $p$, recalling that $f\in L^{N}(\Omega)$, implies that 
	\begin{equation*}
	\displaystyle \lim_{p\to 1^{+}}\int_{\{u_p>\delta\}}  h(u_p)f u_p^\frac{\sigma-1+p}{p} \varphi = \int_{\{u>\delta\}}  h(u)f u^\sigma \varphi.
	\end{equation*}	
	Then by the Lebesgue theorem, we can pass to the limit also as $\delta\to 0^+$  since $h(u)fu^\sigma \varphi  \in L^1(\Omega)$ (recall $u$ is bounded). In particular observe that, for fixed $0\le \varphi \in C^1_c(\Omega)$,  all but the first term  in \eqref{z_1} are uniformly bounded with respect to $p$. 

	Therefore $u_p^{\frac{\sigma-1+p^2}{p^2}}$ is bounded in $BV_{\rm{loc}}(\Omega)$ and it locally converges, up to subsequences, to $u^{\sigma}$ a.e. in $\Omega$, in $L^r(\Omega)$ with $r<\frac{N}{N-1}$ and $\nabla u_p^{\frac{\sigma-1+p^2}{p^2}}$ converges $\ast$-weakly locally as measures to $D u^\sigma$. Then by weak lower semicontinuity in the first term we obtain, recalling \eqref{equ}, that 
				\begin{equation*}
		 	 	 	\int_{\Omega} \varphi|D u^{\sigma}| + \int_{\Omega} u^\sigma z \cdot \nabla \varphi \le \int_{\Omega}  h(u) f u^\sigma \varphi = -\int_{\Omega}(u^{\sigma})^*\varphi \operatorname{div}z, \ \ \ \forall \varphi\in C^1_c(\Omega), \ \ \varphi \ge 0.	
	 	 	 	\end{equation*}	
	 	 	 		 	 	 	
 	 	Moreover it follows from Proposition \ref{nuova} that 
	 		 	\begin{equation*}
	 		 	 	\int_{\Omega} \varphi|D u^{\sigma}| \le - \int_{\Omega} u^\sigma z \cdot \nabla \varphi -\int_{\Omega}(u^{\sigma})^* \varphi\operatorname{div}z = \int_{\Omega}\varphi (z, D u^\sigma),  \ \ \ \forall \varphi\in C^1_c(\Omega), \ \ \varphi \ge 0\,,	
	 	  	 	\end{equation*}			 	 	 then 
\begin{equation}	\label{rev}			\int_{\Omega} \varphi|D u^{\sigma}|  = \int_{\Omega}\varphi (z, D u^\sigma),  \ \ \ \forall \varphi\in C^1_c(\Omega), \ \ \varphi \ge 0
\,,
\end{equation}	 
		the reverse inequality being  trivial since $||z||_{\infty}\le 1$.  
		\medskip 

	{\it Proof completed.}	 Here we show that \eqref{rev}  implies \eqref{lemma_identificarez}. 
Let us choose in \eqref{radonnik} $\Lambda: [0,\infty)\to[0,\infty)$ defined by $\Lambda(s)=s^\sigma$, which is a Lipschitz increasing function, since $\sigma\geq 1$. Then
	\begin{equation*}
	\frac{(z, Du)}{|Du|} = \theta(z,Du,x)=\theta(z,Du^\sigma,x) =\frac{(z, Du^\sigma)}{|Du^\sigma|}  \qquad \text{for \(|Du|\)-a.e.}\ x\in\Omega, 
	\end{equation*}	
	namely \eqref{lemma_identificarez}.	

		 	 	 \end{proof}		 	 	  
	In order to prove Theorem \ref{existence} we need to show that $z\in \mathcal{D}\mathcal{M}^\infty(\Omega)$. In fact, we have the following stronger general fact:			
	\begin{lemma}\label{lemmal1}
		Let $0\le g\in L^{1}_{\rm{loc}}(\Omega)$ and let $z\in \mathcal{D}\mathcal{M}^\infty_{\rm{loc}}(\Omega)$ with $||z||_{\infty}\le 1$ such that 
		\begin{equation}\label{lemma_distr*}
		-\operatorname{div}z = g \text{  in  } \mathcal{D'}(\Omega),
		\end{equation}
		then 
		\begin{equation}\label{l1}
		g \in L^1(\Omega)\,.
		\end{equation}	
		In particular,  $\operatorname{div}z \in L^1(\Omega)$ and the following holds  	
		\begin{equation}\label{eqtestbv}
		-\int_\Omega v\,\Div z = \int_\Omega  gv, \ \ \forall v\in BV(\Omega)\cap L^\infty(\Omega).
		\end{equation}
	\end{lemma}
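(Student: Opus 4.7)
The plan is to reduce everything to a suitable global test function. The heart of the matter is to convert local information on $z$ into global integrability of $g$; once this is in place, both $\operatorname{div} z \in L^1(\Omega)$ and the integration by parts identity follow immediately.

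First, I would exploit the Lipschitz regularity of $\partial\Omega$ to construct a sequence of cutoffs $\phi_\epsilon \in C^\infty_c(\Omega)$ with $0\le \phi_\epsilon \le 1$, $\phi_\epsilon \to 1$ pointwise in $\Omega$, and
$$\int_\Omega |\nabla \phi_\epsilon| \le C,$$
uniformly in $\epsilon$. The natural candidate is a regularization of $\min(1,\operatorname{dist}(x,\partial\Omega)/\epsilon)$: this is Lipschitz, tends to $1$ in $\Omega$, and $|\nabla \phi_\epsilon|\leq 1/\epsilon$ is supported on the tubular neighborhood $\{0<\operatorname{dist}(\cdot,\partial\Omega)<\epsilon\}$ whose Lebesgue measure is $\mathcal O(\epsilon)$ thanks to the Lipschitz character of $\partial\Omega$. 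A standard mollification then returns us to $C^\infty_c(\Omega)$ without worsening the BV bound.

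Next, I would test the distributional equation \eqref{lemma_distr*} against $\phi_\epsilon$, which is an admissible test function since $\phi_\epsilon\in C^\infty_c(\Omega)\subset \mathcal D(\Omega)$ (note that locally $z\in \DM^\infty$ suffices to make sense of the pairing). This yields
$$\int_\Omega g\,\phi_\epsilon \;=\; \int_\Omega z\cdot \nabla \phi_\epsilon \;\leq\; \|z\|_\infty \int_\Omega |\nabla \phi_\epsilon| \;\leq\; C.$$
Since $g\ge 0$ and $\phi_\epsilon\nearrow 1$ a.e.\ in $\Omega$, monotone convergence (or Fatou) gives $\int_\Omega g\le C$, hence \eqref{l1}. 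In turn, $\operatorname{div} z = -g\in L^1(\Omega)\subset \mathcal{M}(\Omega)$, so $z\in \DM(\Omega)$ globally and $\operatorname{div} z$ is in fact an $L^1$ function (absolutely continuous with respect to the Lebesgue measure).

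Finally, the identity \eqref{eqtestbv} is then essentially automatic. Since $\operatorname{div} z$ is an $L^1$ function satisfying $\operatorname{div} z = -g$ a.e.\ in $\Omega$ and every $v\in BV(\Omega)\cap L^\infty(\Omega)$ belongs to $L^\infty(\Omega)$, the product $v\,\operatorname{div} z$ is in $L^1(\Omega)$ and
$$-\int_\Omega v\,\operatorname{div} z = \int_\Omega v\,g,$$
with no need to invoke the Anzellotti machinery. The only genuine obstacle in the whole argument is the construction of the cutoff family $\phi_\epsilon$ with uniformly bounded total variation; this is where the Lipschitz assumption on $\partial\Omega$ (equivalently, the finiteness of $\mathcal H^{N-1}(\partial\Omega)$) is essential, and without it the global conclusion could fail even though $z$ remains locally in $\DM^\infty$.
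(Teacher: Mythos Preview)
Your argument is correct and takes a genuinely different, more direct route than the paper's. The paper first extends the distributional identity to all nonnegative $v\in W^{1,1}_0(\Omega)$ via mollification and truncation, then invokes a technical construction from Anzellotti (\cite[Lemma 5.5]{An}) producing, for each $\tilde v\in W^{1,1}(\Omega)$, functions $w_n$ with the same trace and $\int_\Omega|\nabla w_n|\le \int_{\partial\Omega}\tilde v\,d\mathcal H^{N-1}+\tfrac1n$; testing with $|\tilde v - w_n|$ and taking $\tilde v\equiv 1$ then yields $\int_\Omega g\le \mathcal H^{N-1}(\partial\Omega)$. Your cutoff $\phi_\epsilon$ accomplishes the same thing in one stroke: it is essentially a concrete realization of the function $1-w_n$ in the paper's argument, and the tubular-neighborhood estimate $|\{0<\operatorname{dist}(\cdot,\partial\Omega)<\epsilon\}|=O(\epsilon)$ for Lipschitz domains plays the role of Anzellotti's lemma. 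Both approaches ultimately use the Lipschitz hypothesis on $\partial\Omega$ at exactly this point, and both deliver the same bound (your constant $C$ is $\mathcal H^{N-1}(\partial\Omega)$ up to the mollification error).

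For the final identity \eqref{eqtestbv} you are also right that no pairing machinery is needed: once $\operatorname{div}z=-g$ holds as an equality of $L^1$ functions, multiplying by $v\in L^\infty(\Omega)$ and integrating is a purely measure-theoretic statement. The paper's appeal to Anzellotti's theory at that step is more than is required. What your shortcut does not produce, and the paper's longer route does, is the intermediate fact that the equation can be tested against arbitrary $v\in W^{1,1}_0(\Omega)$ with the gradient pairing $\int_\Omega z\cdot\nabla v$ on the left; but this is not part of the lemma's statement, so nothing is lost.
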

	\begin{proof}
		Let $0\le v\in W^{1,1}_0(\Omega)$ and let $\varphi_n\in C^{1}_c(\Omega)$ be a sequence of nonnegative functions converging in $W^{1,1}_0(\Omega)$ to $v$. Let us take $\rho_\eta*(v\wedge \varphi_n)$ as test function in \eqref{lemma_distr*} where $\rho_\eta$ ($\eta>0$) is a standard mollifier. We obtain
		\begin{equation}\label{contest}
		\int_\Omega z \cdot \nabla (\rho_\eta*(v\wedge \varphi_n)) = \int_\Omega g\rho_\eta*(v\wedge \varphi_n),
		\end{equation}
		and we are able to pass to the limit in the left hand side of the previous as $\eta\to 0$ since $\rho_\eta*(v\wedge \varphi_n)\to v\wedge \varphi_n$ strongly in $W^{1,1}_0(\Omega)$. For the right hand side we observe that, for $\eta>0$ small enough, $\sop(\rho_\eta*(v\wedge \varphi_n))\subseteq \omega_n$, where $\omega_n\subset\subset\Omega$. Moreover $\|\rho_\eta*(v\wedge \varphi_n)\|_{L^\infty(\Omega)}\leq	\|v\wedge \varphi_n\|_{L^\infty(\Omega)}$ and, as $\eta\to 0$, $\rho_\eta*(v\wedge \varphi_n)$ converges a.e. in $\Omega$ to $v\wedge \varphi_n$. Thus	$\rho_\eta*(v\wedge \varphi_n)$ converges $\ast$-weak in $L^\infty(\Omega)$ to $v\wedge \varphi_n$.
		\\Then, since $g\in L^1(\omega_n)$, we have that
		\begin{equation}\label{contest2}
		\displaystyle \lim_{\eta\to 0}\int_\Omega g (\rho_\eta*(v\wedge \varphi_n))=  \int_\Omega g(v\wedge \varphi_n).
		\end{equation}
		By \eqref{contest} and \eqref{contest2}  we deduce
		\begin{equation}\label{contest3}
		\int_\Omega z \cdot \nabla (v\wedge \varphi_n)
		=
		\int_\Omega g(v\wedge \varphi_n).
		\end{equation}
		Now, we need to pass to the limit the previous as $n\to \infty$. Since $v\wedge \varphi_n$ converges to $v$ in $W^{1,1}_0(\Omega)$ then 
		we have
		\begin{equation*}
		\dys \lim_{n\to\infty}\int_\Omega z \cdot \nabla (v\wedge \varphi_n)
		=
		\int_\Omega z \cdot \nabla v
		.
		\end{equation*}
		For the right hand side of \eqref{contest3} we observe that $g(v\wedge \varphi_n)$ converges a.e. in $\Omega$ to $gv$ and that $		0\leq g(v\wedge \varphi_n)\leq gv.$
		Then, in order to apply the Lebesgue theorem, it is sufficient to show that $gv \in L^1(\Omega)$.
		Indeed we have
		\begin{equation*}
		\int_\Omega g\varphi_n = \int_\Omega z\cdot \nabla \varphi_n \leq \|z\|_{L^\infty(\Omega)}\int_\Omega  |\nabla \varphi_n| \le C\,,
		\end{equation*}
		where the last inequality follows from the fact that $\varphi_n$ converges to $v$ in $W^{1,1}_0(\Omega)$. Then an application of the Fatou lemma implies $gv\in L^1(\Omega)$. Hence we have proved that
		\begin{equation}\label{weak11}
		\int_\Omega z \cdot \nabla v
		=
		\int_\Omega gv, \ \ \forall v\in W^{1,1}_0(\Omega), \ \ v\ge 0.
		\end{equation}				 
		 Now we take $\tilde{v} \in W^{1,1}(\Omega)$ and then it follows from \cite[Lemma 5.5]{An} the existence of $w_n\in
		W^{1, 1}(\Omega)\cap C(\Omega)$ having $w_n|_{\partial\Omega}=\tilde{v}|_{\partial\Omega}$, $\displaystyle\int_\Omega|\nabla
		w_n|\,dx\le\displaystyle\int_{\partial\Omega}\tilde{v}\,d\mathcal H^{N-1}+\frac1n\,,$ and such that $w_n$ tends to $0$ in $\Omega$. 
		%
		%
		Clearly, we can take $|v-w_n|\in W_0^{1,1}(\Omega)$ as a test function in \eqref{weak11}, obtaining
		\begin{align*}
		\int_\Omega g|\tilde{v}-w_n| &= \int_\Omega z\cdot\nabla|\tilde{v}-w_n| \le \|z\|_{\infty}\int_\Omega|\nabla \tilde{v}| + \|z\|_{\infty} \int_\Omega|\nabla w_n| 
		\\
		&\le \int_\Omega|\nabla \tilde{v}|+\int_{\partial\Omega}\tilde{v}\,d\mathcal H^{N-1}+\frac1n\,.
		\end{align*}
		Once again an application of the Fatou lemma implies
		\begin{equation*}
		\int_\Omega g\tilde{v}\le \int_\Omega|\nabla \tilde{v}|+\int_{\partial\Omega}\tilde{v}\,d\mathcal H^{N-1},
		\end{equation*}
		where, taking $\tilde{v}\equiv 1$, one deduces that $g$ belongs to $L^1(\Omega)$.
		Since $\operatorname{div}z \in L^1(\Omega)$, one can apply Anzellotti's theory in order to prove \eqref{eqtestbv}. \bk
	\end{proof}

The following lemma  shows that the boundary datum is attained in the sense of Definition \ref{weakdef}. 
		 	 	  
		 	 	\begin{lemma}\label{lemmabordo}
		 	 		Under the same assumptions of Lemma \ref{lemmaregolarita}, the vector field $z$ found in Lemma \ref{lemma_campoz} is such that  one of the following holds:		 	 	 
		 	 		\begin{align}\label{bordo}
		 	 			& \lim_{\epsilon\to 0} \fint_{\Omega\cap B(x,\epsilon)} u (y) dy = 0 \ \ \ \text{or} \ \ \ [z,\nu] (x)= -1 \ \ \ \text{for  $\mathcal{H}^{N-1}$-a.e. } x \in \partial\Omega.
		 	 		\end{align}
		 	 	\end{lemma}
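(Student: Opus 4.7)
The plan is to reach the pointwise identity
$$
u^{\sigma}|_{\partial\Omega}(x)\bigl(1+[z,\nu](x)\bigr)=0 \qquad \text{for } \mathcal{H}^{N-1}\text{-a.e.\ } x\in\partial\Omega,
$$
from which \eqref{bordo} follows: where $u^{\sigma}|_{\partial\Omega}(x)=0$, the trace formula for $u^\sigma\in BV(\Omega)$ together with Jensen's inequality (admissible because $\sigma\geq 1$) yields $\lim_{\epsilon\to 0}\fint_{\Omega\cap B(x,\epsilon)}u(y)\,dy=0$, while elsewhere $[z,\nu](x)=-1$. The identity will be obtained by comparing a Green-type identity valid inside the limit problem with a lower-semicontinuity inequality that captures the boundary trace of $u^\sigma$ and is obtained by passing to the limit $p\to 1^{+}$ in the energy identity \eqref{dispotenza}.

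For the first relation, since $\operatorname{div} z\in L^{1}(\Omega)$ by Lemma \ref{lemmal1}, we have $z\in\mathcal{DM}^{\infty}(\Omega)$; applying Proposition \ref{poiu} to $v=u^{\sigma}\in BV(\Omega)\cap L^{\infty}(\Omega)$, and using \eqref{equ}, the identification $(z,Du^{\sigma})=|Du^{\sigma}|$ obtained in the proof of Lemma \ref{lemma_campoz} (see \eqref{rev}), together with $[u^{\sigma} z,\nu]=u^{\sigma}|_{\partial\Omega}\,[z,\nu]$ (cf.\ \eqref{des2}), yields
$$
\int_{\Omega}|Du^{\sigma}|=\int_{\Omega}h(u)f\,u^{\sigma}+\int_{\partial\Omega} u^{\sigma}|_{\partial\Omega}\,[z,\nu]\,d\mathcal{H}^{N-1}. \qquad (\mathrm{G})
$$

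For the second relation, recall from \eqref{dispotenza} that $\int_{\Omega}|\nabla \Gamma_p(u_p)|^p\leq \int_{\Omega} h(u_p)f\,\psi_p(u_p)$, with $\psi_p$ and $\Gamma_p$ as in Lemma \ref{lemmapfisso}. Young's inequality gives
$$
\int_{\Omega}|\nabla \Gamma_p(u_p)|\leq \frac{1}{p}\int_{\Omega} h(u_p)f\,\psi_p(u_p)+\frac{p-1}{p}|\Omega|.
$$
Since $\Gamma_p(0)=0$ and $u_p\in W^{1,p}_0(\Omega)\cap L^{\infty}(\Omega)$, we have $\Gamma_p(u_p)\in W^{1,p}_0(\Omega)$, so its zero-extension to $\mathbb{R}^{N}$ lies in $W^{1,p}(\mathbb{R}^{N})$ without boundary jump, and the zero-extensions converge in $L^{1}(\mathbb{R}^{N})$ to the zero-extension of $u^{\sigma}$, whose $BV$-jump across $\partial\Omega$ equals $u^{\sigma}|_{\partial\Omega}$. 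BV lower semicontinuity in $\mathbb{R}^{N}$ thus provides
$$
\int_{\Omega}|Du^{\sigma}|+\int_{\partial\Omega}u^{\sigma}|_{\partial\Omega}\,d\mathcal{H}^{N-1}\leq \liminf_{p\to 1^{+}}\int_{\Omega}|\nabla \Gamma_p(u_p)|.
$$
The choice of $\psi_p$ in \eqref{psi} is pivotal because it makes $h(u_p)\psi_p(u_p)$ dominated, uniformly in $p$, by $C+M u_p^{\frac{\sigma-1+p}{p}}$, which converges in $L^{\frac{N}{N-1}}(\Omega)$ by \eqref{convpotenza}; together with $f\in L^{N}(\Omega)$, a generalized dominated-convergence argument yields
$$
\int_{\Omega}h(u_p)f\,\psi_p(u_p)\longrightarrow \int_{\Omega} h(u)f\,u^{\sigma},
$$
and consequently
$$
\int_{\Omega}|Du^{\sigma}|+\int_{\partial\Omega}u^{\sigma}|_{\partial\Omega}\,d\mathcal{H}^{N-1}\leq \int_{\Omega} h(u)f\,u^{\sigma}. \qquad (\mathrm{I})
$$

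Subtracting $(\mathrm{G})$ from $(\mathrm{I})$ produces $\int_{\partial\Omega} u^{\sigma}|_{\partial\Omega}\bigl(1+[z,\nu]\bigr)\,d\mathcal{H}^{N-1}\leq 0$; as $u\geq 0$ and $1+[z,\nu]\geq 0$ (since $\|[z,\nu]\|_{\infty}\leq \|z\|_{\infty}\leq 1$), the integrand is pointwise non-negative, hence vanishes $\mathcal{H}^{N-1}$-a.e.\ on $\partial\Omega$, proving the pointwise identity and therefore \eqref{bordo}. The main obstacle, and what dictates the choice of the more elaborate test function $\psi_p(u_p)$ in place of $u_p^{\frac{\sigma-1+p}{p}}$, is precisely arranging the correct passage to the limit on the right-hand side: a naive application of Fatou's lemma yields an inequality in the wrong direction, and the specific interpolation built into $\psi_p$ is what keeps $h(u_p)\psi_p(u_p)$ under control even in the strong-singularity regime $\gamma>1$.
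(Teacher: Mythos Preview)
Your proof is correct and follows essentially the same route as the paper: pass to the limit in \eqref{dispotenza} via Young's inequality and lower semicontinuity of the $BV$-norm including the boundary contribution (your zero-extension argument is exactly this), combine with the Green identity built from \eqref{equ}, \eqref{rev}, Proposition~\ref{poiu} and Lemma~\ref{lemmal1}, deduce $u^\sigma(1+[z,\nu])=0$ $\mathcal H^{N-1}$-a.e.\ on $\partial\Omega$, and finish via H\"older/Jensen. One small caveat: when $\gamma>1$ one has in general only $u_p\in W^{1,p}_{\rm loc}(\Omega)$, not $W^{1,p}_0(\Omega)$, so your justification for $\Gamma_p(u_p)\in W^{1,p}_0(\Omega)$ is slightly off; the conclusion is nonetheless valid and already established in the proof of \eqref{dispotenza} in Lemma~\ref{lemmapfisso} via the approximating sequence $u_n$.
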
		 	 	  
		 	 	\begin{proof}
		 	 		In order to prove \eqref{bordo} we observe  that \eqref{dispotenza} together with the Young inequality and the fact that 	$u_p^\frac{\sigma-1+p}{p}$	has zero trace in $W^{1,p}_{0}(\Omega)$, gives	 	 	
		 	 		\begin{equation*}
		 	 			\int_{\Omega} |\nabla \Gamma_{p}(u_{p})| + \int_{\partial \Omega}u_p^\frac{\sigma-1+p}{p} d\mathcal{H}^{N-1} \le  \int_{\Omega}  h(u_{p})f \psi_{p}(u_{p}) + \frac{p-1}{p}|\Omega|\,.	
		 	 		\end{equation*}		 	 	 	
		 	 		Now, we use weak lower semicontinuity on the left hand side, while, reasoning as in the proof of Lemma \ref{lemma_campoz} (splitting the integral the function in the two zones $\{u_{p}\leq k_{0}\}$ and $\{u_{p}> k_{0}\}$ and using the definition of $\psi_{p}$) it is not difficult to use  Lebesgue theorem  in order to get 
		 	 		\begin{equation*}
		 	 			\int_{\Omega} |D u^\sigma| + \int_{\partial \Omega}u^\sigma d\mathcal{H}^{N-1} \le  \int_{\Omega}  h(u)f u^\sigma = -\int_{\Omega}(u^{\sigma})^*\operatorname{div}z\,,
		 	 		\end{equation*}	
				where in the last equality we used \eqref{equ}. 		 	 	 	
		 	 		By the Gauss--Green  formula \eqref{GreenIII} we have
		 	 		\begin{equation*}
		 	 			\int_{\Omega} |D u^\sigma| + \int_{\partial \Omega}u^\sigma d\mathcal{H}^{N-1} \le  \int_{\Omega}(z,D u^\sigma) - \int_{\partial \Omega} [u^\sigma z,\nu]d\mathcal{H}^{N-1}.
		 	 		\end{equation*}	
		 	 		 Now we can apply Lemma \ref{lemmal1} with $g=h(u)f$ in order to deduce that $z\in \DM(\Omega)$ and then, since by \eqref{rev} $|D u^\sigma|= (z,D u^\sigma)$, the previous implies \bk
		 	 		\begin{equation*}\label{eqbordo}u^\sigma( 1 + [z,\nu]) = 0  \ \ \ \text{   $\mathcal{H}^{N-1}$- a.e. on $\partial \Omega$}.\end{equation*}
		 	 		Therefore, either   $ [z,\nu](x)=-1\bk$ or $u^{\sigma}(x)=0$ for $\mathcal{H}^{N-1}$-a.e. $x\in \partial\Omega$. In particular (see Theorem 3.87, \cite{AFP}), if $u^{\sigma}(x)=0$,  then 				 $$\lim_{\epsilon\to 0} \fint_{\Omega\cap B(x,\epsilon)} u^{\sigma} (y) dy=0\,.$$
		 	 		If $\sigma>1$,  using H\"older inequality one gets, 
		 	 		$$
		 	 		\begin{array}{l}
		 	 		\dys\fint_{\Omega\cap B(x,\epsilon)} u(y) dy \leq \left( \fint_{\Omega\cap B(x,\epsilon)} u^{\sigma} (y) dy\right)^{\frac{1}{\sigma}}\frac{|\Omega\cap B(x,\epsilon)|^{\frac{1}{\sigma'}}}{\epsilon^{\frac{N}{\sigma'}}}\leq C\left( \fint_{\Omega\cap B(x,\epsilon)} u^{\sigma} (y) dy\right)^{\frac{1}{\sigma}}\stackrel{\epsilon\to 0}{\longrightarrow} 0\,,
		 	 		\end{array} 
		 	 		$$
		 	 		that is \eqref{bordo} holds. 
		 	 	\end{proof}

				\begin{proof}[Proof of Theorem \ref{existence}]
					The proof follows by gathering together  Lemmata \ref{lemmaregolarita} -- \ref{lemmal1}. 					
				\end{proof}

				\begin{proof}[Proof of Theorem \ref{regularity}]
					By Lemma \ref{lemmaregolarita} we  have  that $u^\sigma\in BV(\Omega)$. 
					Moreover, if $h(0)=\infty$, it was already observed that, as \eqref{l1loc} is in force,  then $u>0$ a.e. in $\Omega$.  Moreover,  one can apply Lemma \ref{lemmal1} with $g=h(u)f$ in order to deduce that $\operatorname{div}z \in L^1(\Omega)$ and that \eqref{formtestestese} holds.\bk

				Now let $h(0)<\infty$, we want to show that  $u\equiv 0$. We consider the solution $0\le w_p\in W^{1,p}_0(\Omega)\cap L^\infty(\Omega)$ to 
						\begin{equation}\label{confronto}
							\begin{cases}
								\displaystyle - \Delta_p w_p= ||h||_{L^\infty([0,\infty))}f &  \text{in}\, \Omega, \\
								w_p=0 & \text{on}\ \partial \Omega.
							\end{cases}
						\end{equation}					 
					If $\dys||h||_{L^\infty([0,\infty))}||f||_{L^N(\Omega)}< \frac{1}{\mathcal{S}_1}$ then using \cite[Theorem $4.1$]{CT} we deduce that  $w_p$ goes to zero a.e. in $\Omega$ as $p\to 1^{+}$. 
					\\On the other hand, we recall that $u$ is the a.e. limit in $\Omega$ of the solutions to 
						\begin{equation}\label{confronto2}
							\begin{cases}
								\displaystyle - \Delta_p u_p= h(u_p)f &  \text{in}\, \Omega, \\
								u_p=0 & \text{on}\ \partial \Omega,
							\end{cases}
						\end{equation}
					where $0\le u_p\in W^{1,p}_0(\Omega)\cap L^\infty(\Omega)$. 
					  We take $(w_p-u_p)^-$ as a test function in the difference between weak formulations \eqref{confronto} and \eqref{confronto2}
						\begin{equation*}
							-\int_{\{w_p<u_p\}} (|\nabla w_p|^{p-2}\nabla w_p - |\nabla u_p|^{p-2}\nabla u_p)\cdot \nabla (w_p-u_p) = \int_{\Omega} (||h||_{L^\infty([0,\infty))}- h(u_p))f(w_p-u_p)^- \ge 0,
						\end{equation*}
						which, by monotonicity, implies
						$$\int_{\{w_p<u_p\}} (|\nabla w_p|^{p-2}\nabla w_p - |\nabla u_p|^{p-2}\nabla u_p)\cdot \nabla (w_p-u_p)=0,$$
						that is $w_p\ge u_p\ge 0$ a.e. in $\Omega$. Therefore, taking $p\to 1^+$, one obtains $u\equiv 0$.
						\bk
					\end{proof}

We conclude this section by proving our uniqueness result. 
					
\begin{proof}[Proof of Theorem \ref{uniqueness}]
Since $h(u)f\in L^1_{\rm loc}(\Omega)$ (see \eqref{def_l1loc}) we can apply Lemma \ref{lemmal1} deducing
\begin{equation*}
-\int_\Omega v\,\Div z = \int_\Omega  h(u)fv, \ \ \forall v\in BV(\Omega)\cap L^\infty(\Omega).
\end{equation*}	
Moreover we apply Proposition \ref{poiu} and, recalling \eqref{des2}, we deduce						
\begin{equation}\label{test}
\int_\Omega (z, Dv)-\int_{\partial\Omega}v[z,\nu]\, d\mathcal H^{N-1} = \int_\Omega  h(u)fv,\ \ \forall v\in BV(\Omega)\cap L^\infty(\Omega).
\end{equation}
Let $u_1$ and $u_2$ be solutions to problem \eqref{pb1lap} satisfying \eqref{eqregul} and we denote by, respectively, $z_1$ and $z_2$ the vector fields appearing in Definition \ref{weakdef}. Now we take $v=u_1^\sigma- u_2^\sigma$ in the difference of weak formulations \eqref{test} solved by $u_1,u_2$. Thus 
\begin{equation*}
\begin{aligned}
\int_\Omega (z_1, Du_1^\sigma)-\int_\Omega(z_2, Du_1^\sigma) &+ \int_\Omega(z_2, Du_2^\sigma) - \int_\Omega(z_1, Du_2^\sigma) - \int_{\partial\Omega}(u_1^\sigma-u_2^\sigma)[z_1,\nu])\, d\mathcal H^{N-1}				    	
\\
&+\int_{\partial\Omega}(u_1^\sigma-u_2^\sigma)[z_2,\nu])\, d\mathcal H^{N-1}						    	
= \int_\Omega (h(u_1)- h(u_2))f(u_1^\sigma- u_2^\sigma).
\end{aligned}
\end{equation*} 
Then we can reason as in the last step of the proof Lemma \ref{lemma_campoz} in order to deduce $$\int_\Omega (z_i, Du_i^\sigma) = \int_\Omega |Du_i^\sigma|\ \ \ \text{for}\ \  i=1,2\,.$$ Moreover observe that for a nonnegative function $u\in L^\infty(\Omega)$ such that  $u^{\sigma}$ in $BV(\Omega)$ one has that  $$\dys\lim_{\epsilon\to 0}  \fint_{\Omega\cap B(x,\epsilon)} u (y) dy = 0$$ implies $$\dys\lim_{\epsilon\to 0}  \fint_{\Omega\cap B(x,\epsilon)} u^\sigma (y) dy = 0\,.$$
In particular
$$
u_{i}^{\sigma}(1+[z_{i},\nu])=0\ \ \mathcal H^{N-1}-\text{a.e. on}\ \partial\Omega\  \ \text{for}\ \  i=1,2
$$
Then, it follows
\begin{align*}
\int_\Omega |Du_1^\sigma|-\int_\Omega(z_2, Du_1^\sigma) &+ \int_\Omega|Du_2^\sigma| - \int_\Omega(z_1, Du_2^\sigma) +  \int_{\partial\Omega}(u_1^\sigma +u_1^\sigma [z_2,\nu]  )\, d\mathcal H^{N-1}				    	
\\
	& +\int_{\partial\Omega}(u_2^\sigma[z_1,\nu] + u_2^\sigma)\, d\mathcal H^{N-1}	 					    	
= \int_\Omega (h(u_1)- h(u_2))f(u_1^\sigma- u_2^\sigma).
\end{align*}					    	
Hence recalling that $||z_i||_{\infty} \le 1$ and that $[z_i,\nu] \in [-1,1]$ for $i=1,2$ then the left hand side of the previous is nonnegative. This gives that 
$$\int_\Omega (h(u_1)- h(u_2))f(u_1^\sigma- u_2^\sigma)\ge 0,$$
which implies $u_1=u_2$ a.e. in $\Omega$ since $f>0$ a.e. in $\Omega$.
\end{proof}
					
\section{Nonnegative data $f$}

\label{sec:fnonnegativa}
\setcounter{equation}{0}	

Here we extend existence Theorem \ref{existence}  to the case of a nonnegative $f$ in $L^N(\Omega)$ in   \eqref{pb1lap}.  Here we focus on the purely singular case $h(0)=\infty$; if this is not the case (i.e. $h(0)<\infty$), one can easily re-adapt (with many simplifications) the argument of the previous section in order to  obtain a solution to problem \eqref{pb1lap} which satisfies \eqref{def_l1loc}-\eqref{def_bordo}. 

 As suggested in  \cite{DGS},  when $h$ actually blows up at the origin then the notion of solution should be suitably modified. \bk In fact, roughly speaking, the approximating solutions $u_{p}$ could converge to a limit function $u$  that may have a non-trivial set $\{u=0\}$. This fact, in the $BV$ context amounts to the fact that an additional term (namely a measure) appears in the limit equation (see Remark \ref{remarkequazione}); this additional term can be absorbed in the principal part of the equation by formally multiplying it by $\chi^*_{\{u>0\}}$. Moreover, in this case the vector field $z$ will actually belong to  $\mathcal{D}\mathcal{M}^\infty_{\rm{loc}}(\Omega)$ and this leads to a different formulation for the boundary datum that involves the power $\sigma=\max(1,\gamma)$ of $u$. As a matter of fact, in the case $f>0$, the two definitions do essentially coincide (see Remark \ref{rema}).  We set the following 
		\begin{defin}\label{weakdefnonnegative}
			Let $0\le f\in L^{N}(\Omega)$ then a function $u\in BV_{\rm{loc}}(\Omega)\cap L^\infty(\Omega)$ having $\chi_{\{u>0\}} \in BV_{\rm{loc}}(\Omega)$ and $u^\sigma \in BV(\Omega)$  is a solution to problem \eqref{pb1lap} if there exists $z\in \mathcal{D}\mathcal{M}^\infty_{\rm{loc}}(\Omega)$ with $||z||_{\infty}\le 1$ such that
			\begin{align}
				&h(u)f\in L^{1}_{\rm{loc}}(\Omega),\label{nondef_l1}
				\\
				&-(\operatorname{div}z)\chi^*_{\{u>0\}} = h(u)f \ \ \ \text{in  } \mathcal{D'}(\Omega), \label{nondef_eqdistr}
				\\
				&(z,Du)=|Du| \label{nondef_campo} \ \ \ \ \text{as measures in } \Omega,
				\\
				&  u^\sigma(x) + [u^\sigma z,\nu] (x)= 0 \label{nondef_bordo}\ \ \ \text{for  $\mathcal{H}^{N-1}$-a.e. } x \in \partial\Omega.
			\end{align}
			\end{defin}	
		\begin{remark}\label{remarkequazione}
			 It is worth noting that, reasoning as in \cite{DGS}, one can prove that $(z, D\chi_{\{u>0\}})= |D\chi_{\{u>0\}}|$. This means that, by  the Anzellotti theory, one has
			\begin{equation*}
			-\big(\Div z\big)\chi^*_{\{u>0\}} =-\Div\big(z\chi_{\{u>0\}}\big)+(z,D \chi_{\{u>0\}}),
			\end{equation*} 
			and then the equation \eqref{nondef_eqdistr} reads as 
			\begin{equation*}
			-\Div\big(z\chi_{\{u>0\}}\big)+|D \chi_{\{u>0\}}|=h(u)f,
			\end{equation*}
			that is, it reduces to  a sum of an operator in divergence form and an additional term $|D \chi_{\{u>0\}}|$, which is a measure concentrated on the, non-trivial in this case,  reduced boundary $\partial^*\{u>0\}$, or equivalently on the reduced boundary $\partial^*\{u=0\}$. \bk
		\end{remark}
		\begin{remark}\label{rema} 
			Let us stress that  a solution in the sense of Definition \ref{weakdefnonnegative} is also a solution in the sense of Definition \ref{weakdef} in case of $f>0$ a.e. in $\Omega$. Indeed, since $h(u)f$ is locally integrable then (recall we are assuming $h(0)=\infty$) $u>0$ a.e. in $\Omega$ and \eqref{nondef_eqdistr} reads as
				$$-\operatorname{div}z = h(u)f \ \ \ \text{in  } \mathcal{D'}(\Omega).$$
			Then we can apply Lemma \ref{lemmal1} in order to deduce that $z\in \mathcal{D}\mathcal{M}^\infty(\Omega)$. Finally we only need to show that \eqref{def_bordo} holds. We observe that having $u^\sigma\in BV(\Omega) \cap L^\infty(\Omega)$ and $z\in \mathcal{D}\mathcal{M}^\infty(\Omega)$ we can use \eqref{des2} in order to deduce  from  \eqref{nondef_bordo} that 
			$$	u^\sigma(x)(1 + [z,\nu] (x))= 0 \ \ \ \text{for  $\mathcal{H}^{N-1}$-a.e. } x \in \partial\Omega\,,$$
			that implies \eqref{def_bordo} reasoning as in the proof of Lemma \ref{lemmabordo}.   
			
			On the other hand it is easy to see that a solution in the sense of Definition \ref{weakdef} also satisfies Definition \ref{weakdefnonnegative} provided $u^{\sigma}\in BV(\Omega)$. 
			
			Let us also finally remark that uniqueness of solution in the sense of Definition \ref{weakdefnonnegative} is not expected in general as some one dimensional  examples in the model case with  $\gamma\leq 1$ show (see \cite{DGS}).  			 				
		\end{remark}

	We have the following counterpart of Theorem \ref{existence} for general nonnegative $f$. 
		\begin{theorem}\label{existencenonnegative}
			Let $0\le f\in L^{N}(\Omega)$ such that $\displaystyle ||f||_{L^N(\Omega)}< \frac{1}{\mathcal{S}_1 h(\infty)}$ and let $h$ satisfy \eqref{h1} and \eqref{h2}. Then there exists a solution $u$ to problem \eqref{pb1lap} in the sense of Definition \ref{weakdefnonnegative}.  

		\end{theorem}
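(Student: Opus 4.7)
The approach is to repeat the approximation scheme used in the proof of Theorem \ref{existence} and identify where the lack of positivity of $f$ forces modifications. As before, let $u_p\in W^{1,p}_{\rm{loc}}(\Omega)$ be the solution to \eqref{pbplap} provided by Theorem \ref{existence_p}. The uniform estimates of Lemma \ref{lemmapfisso} only rely on the smallness condition $\|f\|_{L^N(\Omega)}<(\mathcal{S}_1 h(\infty))^{-1}$ and not on the positivity of $f$, so they remain in force. Up to subsequences, we obtain $u\in L^\infty(\Omega)\cap BV_{\rm{loc}}(\Omega)$ with $u^\sigma\in BV(\Omega)$ and $u_p\to u$ a.e.\ and in $L^q(\Omega)$ for every $q<N/(N-1)$. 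Arguing as for \eqref{convergence2}, we also extract $z\in L^\infty(\Omega;\mathbb{R}^N)$ with $\|z\|_\infty\le 1$ which is the weak $L^q_{\rm{loc}}$ limit of $|\nabla u_p|^{p-2}\nabla u_p$ for every $q<\infty$; a Fatou argument gives $h(u)f\in L^1_{\rm{loc}}(\Omega)$, and the testing trick with $V_\delta(u_p)\varphi$ of Lemma \ref{lemma_campoz} shows that $z\in\mathcal{DM}^\infty_{\rm{loc}}(\Omega)$.

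The first genuine novelty is that $\{u=0\}$ may now have positive measure and the limit equation picks up a contribution supported on its reduced boundary. To capture it, I would test \eqref{pbplap} with $(1-V_\delta(u_p))\varphi$, $\varphi\in C^1_c(\Omega)$, $\varphi\ge 0$: since $-V'_\delta(u_p)|\nabla u_p|^p\ge 0$, weak lower semicontinuity as $p\to 1^+$ delivers a uniform (in $\delta$) bound whose limit as $\delta\to 0^+$ simultaneously yields the $BV_{\rm{loc}}$ character of $\chi_{\{u>0\}}$ and the pointwise saturation $(z,D\chi_{\{u>0\}})=|D\chi_{\{u>0\}}|$, following the lines of \cite{DGS}. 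By Proposition \ref{nuova} one then has
\begin{equation*}
-\Div\bigl(z\chi_{\{u>0\}}\bigr)+|D\chi_{\{u>0\}}|=h(u)f\qquad\text{in }\mathcal{D}'(\Omega),
\end{equation*}
which, via Proposition \ref{nuova} once more, is equivalent to \eqref{nondef_eqdistr} (cf.\ Remark \ref{remarkequazione}).

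The identification \eqref{nondef_campo} is obtained verbatim as in Lemma \ref{lemma_campoz}: I would test \eqref{pbplap} with $u_p^{(\sigma-1+p)/p}\varphi$, $\varphi\in C^1_c(\Omega)$, $\varphi\ge 0$, and pass to the limit using \eqref{convpotenza}, the weak $L^q$ convergence of $|\nabla u_p|^{p-2}\nabla u_p$, weak lower semicontinuity on the term $\int\varphi |\nabla u_p^{(\sigma-1+p^2)/p^2}|$, and the distributional identity just established tested after mollification against $(u^\sigma)^*\varphi$; the Chain Rule \eqref{radonnik} with $\Lambda(s)=s^\sigma$ then promotes the equality obtained for $u^\sigma$ to \eqref{nondef_campo}. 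For the boundary condition I would pass \eqref{dispotenza} to the limit exactly as in Lemma \ref{lemmabordo}, deducing
\begin{equation*}
\int_\Omega|Du^\sigma|+\int_{\partial\Omega} u^\sigma\,d\mathcal H^{N-1}\le \int_\Omega h(u)f\,u^\sigma=-\int_\Omega (u^\sigma)^*\chi^*_{\{u>0\}}\,\Div z,
\end{equation*}
and then combine Proposition \ref{nuova} applied to $u^\sigma z\in\mathcal{DM}^\infty_{\rm{loc}}(\Omega)$ with the Green formula \eqref{GreenIII} (together with $(z,Du^\sigma)=|Du^\sigma|$) to conclude that $u^\sigma+[u^\sigma z,\nu]=0$ $\mathcal H^{N-1}$-a.e.\ on $\partial\Omega$, i.e.\ \eqref{nondef_bordo}.

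The technical heart of the argument is the combined double limit in $p$ and $\delta$ needed to show, at the same time, that $\chi_{\{u>0\}}$ has locally finite perimeter and that $(z,D\chi_{\{u>0\}})$ saturates against $|D\chi_{\{u>0\}}|$: this encodes the concentration of the singular right-hand side on $\partial^*\{u>0\}$ which has no analogue when $f>0$, and is precisely the source of the non-uniqueness recalled in Remark \ref{rema}. All the other steps transfer, with the obvious bookkeeping, from the proofs of Theorems \ref{existence}--\ref{regularity} carried out in Section 5.
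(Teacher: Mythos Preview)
Your outline follows the paper's approach closely and the ingredients you list are the right ones. There is, however, one genuine gap in the step where you derive \eqref{nondef_eqdistr}.

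Testing \eqref{pbplap} with $S_\delta(u_p)\varphi=(1-V_\delta(u_p))\varphi$, using Young's inequality and lower semicontinuity, and letting $p\to 1^+$, $\delta\to 0^+$, yields only the \emph{inequality}
\[
\int_\Omega |D\chi_{\{u>0\}}|\varphi + \int_\Omega z\cdot\nabla\varphi\,\chi_{\{u>0\}} \le \int_\Omega h(u)f\,\chi_{\{u>0\}}\varphi,\qquad 0\le\varphi\in C^1_c(\Omega),
\]
because one of the terms is handled by lower semicontinuity. Even after you invoke the saturation $(z,D\chi_{\{u>0\}})=|D\chi_{\{u>0\}}|$ and Proposition \ref{nuova}, this gives only $-\chi^*_{\{u>0\}}\operatorname{div}z\le h(u)f$, not the equality you wrote. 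The saturation does not supply the missing direction.

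The paper closes this gap with a separate argument: in the Fatou inequality
\[
\int_\Omega h(u)f\varphi \le -\int_\Omega \varphi\,\operatorname{div}z,\qquad 0\le\varphi\in C^1_c(\Omega),
\]
one takes $\varphi=(\chi_{\{u>0\}}\ast\rho_\epsilon)\phi$ and lets $\epsilon\to 0$ (Lebesgue on the right, Fatou on the left) to obtain $-\chi^*_{\{u>0\}}\operatorname{div}z\ge h(u)f\chi_{\{u>0\}}=h(u)f$, using $\{u=0\}\subset\{f=0\}$. This is the reverse inequality you are missing. Once you add this step, the rest of your sketch (identification \eqref{nondef_campo} via the $u_p^{(\sigma-1+p)/p}\varphi$ test and \eqref{radonnik}, and the boundary condition via \eqref{dispotenza} and the Green formula) is in line with the paper.
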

	
\begin{proof}
The proof strictly follows the lines of the one of Theorem \ref{existence} so we only sketch it by highlighting the main differences. One  reasons by approximation  with the distributional solutions $u_p$ to \eqref{pbplap} and use the estimates given in Lemma \ref{lemmapfisso}.  The existence of both an a.e.  limit function   $u\in BV_{\rm{loc}}(\Omega)\cap L^{\infty}(\Omega)$  and a vector field  $z\in \mathcal{D}\mathcal{M}^\infty_{\rm{loc}}(\Omega)$ with $||z||_{\infty}\le 1$ ($\ast$-weak limit in $L^{\infty}(\Omega,\mathbb{R}^N)$ of $|\nabla u_p|^{p-2}\nabla u_p$) then follows as before. Moreover $u^{\sigma}\in BV(\Omega)$  and  \eqref{nondef_campo} is in force. 
 What are left are the proofs that  $\chi_{\{u>0\}}$ belongs to   $BV_{\rm{loc}}(\Omega)$, that  $ h(u)f\in L^{1}_{\rm{loc}}(\Omega)$,  and that   \eqref{nondef_eqdistr} holds. 

 It  follows by the Fatou lemma applied to  \eqref{pweakdef} that    
	\begin{equation}\label{debole}
	 \int_{\Omega}h(u)f\varphi \leq  \int_{\Omega}z\cdot \nabla \varphi  =-\int_{\Omega}\varphi\operatorname{div}z\ \ \ \forall\varphi \in C^1_c(\Omega), \ \ \varphi \ge 0;
	\end{equation}

in particular $ h(u)f\in L^{1}_{\rm{loc}}(\Omega)$. Now we test \eqref{pbplap} with  $S_\delta(u_p)\varphi$ (see Remark \ref{remtest}\bk), where $S_\delta(s):= 1-V_\delta(s)$,  $V_\delta$ is defined in \eqref{V_delta},  and $0\le\varphi\in C^1_c(\Omega)$,   obtaining
	\begin{equation}\label{minore1}
	\int_{\Omega}|\nabla u_p|^{p} S'_\delta(u_p) \varphi + \int_{\Omega}|\nabla u_p|^{p-2} \nabla u_p\cdot \nabla \varphi S_\delta(u_p)  = \int_{\Omega}h(u_p)fS_\delta(u_p)\varphi.
	\end{equation} 
	Thus from \eqref{minore1}, using Young's inequality   (recall $p>1$),  we have
	\begin{equation}
	\begin{aligned}\label{minore2}
	&\int_{\Omega} |\nabla S_\delta(u_p)|\varphi + \int_{\Omega}|\nabla u_p|^{p-2} \nabla u_p\cdot \nabla \varphi S_\delta(u_p)
	\\ 
	&\le \frac{1}{p}\int_{\Omega} |\nabla u_p|^p S'_\delta(u_p)\varphi + \frac{p-1}{p}\int_{\Omega} S'_\delta(u_p)\varphi + \int_{\Omega}|\nabla u_p|^{p-2} \nabla u_p\cdot \nabla \varphi S_\delta(u_p)
	\\ 
	&\le \frac{p-1}{p}\int_{\Omega}  S'_\delta(u_p)\varphi +  \int_{\Omega}h(u_p)fS_\delta(u_p)\varphi.
	\end{aligned} 	
	\end{equation}
 We want to  pass to the limit as $p\to1^{+}$ first, and then we will let  $\delta\to 0^{+}$. 
 First of all, using  that $|\nabla S_\delta(u_p)| = |S'_\delta(u_p)\nabla u_p| \le\frac1\delta |\nabla u_p|$ then it follows from \eqref{exstima} the uniform local boundedness of  $S_\delta(u_p)$ in $BV_{\rm{loc}}(\Omega)$  with respect to $p$ and we can pass to the limit in \eqref{minore2} by weak lower semicontinuity in the first term on the left hand side.  Also the second term easily passes to the limit.  On the right hand side,  the first term vanishes (as $S'_{\delta}$ is bounded) while for the second  term we have 
	$$
	h(u_p)fS_\delta(u_p)\varphi\leq h(u_{p})f\varphi\chi_{\{u_{p}>\delta\}}\leq\sup_{s\in [\delta,\infty)}h(s) \ f \varphi 
	$$

	so that, by  dominated convergence theorem, we can pass to the limit in this term as well   finally get 
	\begin{align*}
	\int_{\Omega} |D S_\delta(u)|\varphi + \int_{\Omega}z\cdot \nabla \varphi S_\delta(u)\le \int_{\Omega}h(u)fS_\delta(u)\varphi.
	\end{align*} 
	Thanks to the fact that $z\in \mathcal{D}\mathcal{M}^\infty_{\rm{loc}}(\Omega)$ and to \eqref{debole}, we have that all but the first term in the previous are uniformly  bounded with respect to $\delta$. Hence $S_\delta(u)$ is bounded in $BV_{\rm{loc}}(\Omega)$ and we are allowed to pass to the limit in $\delta$ (using once again weak lower semicontinuity in  the first term) and Lebesgue's theorem for the remaining terms, getting 	
$$
	\int_{\Omega} |D \chi_{\{u>0\}}|\varphi + \int_{\Omega}z\cdot \nabla \varphi \chi_{\{u>0\}} \le \int_{\Omega}h(u)f\chi_{\{u>0\}}\varphi\,.
$$				
	Observe, in particular, that 
	\begin{equation*}
	\chi_{\{u>0\}} \in BV_{\rm{loc}}(\Omega).
	\end{equation*} 
	Therefore,  recalling Proposition \ref{nuova},  
	$$ -(\operatorname{div}z )\chi^*_{\{u>0\}} = - \operatorname{div}(z \chi_{\{u>0\}}) + (z,D \chi_{\{u>0\}}),$$
	and so 
	\begin{align}\label{minore4bis}
	-\int_{\Omega}\varphi\chi^*_{\{u>0\}}\operatorname{div}z \le \int_{\Omega}h(u)f\chi_{\{u>0\}}\varphi\le \int_{\Omega}h(u)f\varphi.
	\end{align} 
	We prove the reverse inequality.
	In \eqref{debole} we take $\varphi = (\chi_{\{u>0\}}* \rho_\epsilon)\phi$ where $0\le \phi \in C^1_c(\Omega)$ and $\rho_\epsilon$ is a mollifier. Passing to the limit in $\epsilon$ (Lebesgue's theorem on the left hand side and Fatou's lemma on the right hand side) we obtain 	
	\begin{equation}\label{maggiore}
	-\int_{\Omega}\phi\chi^*_{\{u>0\}}\operatorname{div}z \ge \int_{\Omega}h(u)f\chi_{\{u>0\}}\phi \overset{\{u=0\}\subset \{f=0\}}{=}  		\int_{\Omega}h(u)f\phi \ \ \ \forall\phi \in C^1_c(\Omega), \ \ \phi \ge 0,
	\end{equation}
	then \eqref{minore4bis} and \eqref{maggiore} imply that \eqref{nondef_eqdistr} holds.
\end{proof}

\section{The case $f$ in $ L^{N,\infty}(\Omega)$}	\label{lore}

The main results proven in  the previous section can be extended to the case of a slightly more general nonnegative datum in the Lorentz space $f\in L^{N,\infty}(\Omega)$, also called Marcinkiewicz space, this extension being optimal in the sense specified below (see Remark \ref{optimo}). We refer, for instance, to the monograph \cite{PKF} for  a smooth introduction to the subject of Lorentz spaces and their main properties. We only recall that an  H\"older's inequality is available in this case and that the conjugate space associated to $L^{p,q}(\Omega)$ for $p>1$ and $q\in [1,\infty]$ is $L^{p',q'}(\Omega)$. Also,  a  Sobolev embedding  inequality for $W^{1,p}_{0}(\Omega)$ holds, that is
\begin{equation}\label{soblor}
\|v\|_{L^{p^{\ast},p}(\Omega)}\leq \tilde{\mathcal S}_{p} \|\nabla v\|_{W^{1,p}_{0}(\Omega)}, \ \ \forall v\in W^{1,p}_{0}(\Omega)\,.
\end{equation}

The involved constants are explicit and one has
$$ \tilde{\mathcal S}_{p}=\frac{p\Gamma(1+\frac{N}{2})^{\frac{1}{N}}}{\sqrt{\pi}(N-p)} \stackrel{p\to 1^{+}}{\longrightarrow}  \tilde{\mathcal S}_{1}= [(N-1)\omega_{N}^{\frac{1}{N}}]^{-1}, \ \ $$
where $\Gamma$ is the usual Gamma function (see \cite{A, CRT}). 

\medskip 

We  consider problem 
\begin{equation} \label{plor}
	 \begin{cases}
	 \displaystyle -\Delta_{1} u = h(u)f &  \text{in}\, \Omega, \\
	 u=0 & \text{on}\ \partial \Omega,			
	 \end{cases}
\end{equation}
where $f\in L^{N, \infty}(\Omega)$ is nonnegative  and $h$, as before,  is a continuous function satisfying \eqref{h1} and \eqref{h2}. 

Definition \ref{weakdefnonnegative} can be straightforwardly re-adapted to this case with many simplifications if $f>0$ (as in  Definition \ref{weakdef}).
We summarize the results one can obtain in the following

\begin{theorem}\label{tuttolor}
	Let $0\le f\in L^{N,\infty}(\Omega)$ such that $\displaystyle ||f||_{L^{N,\infty}(\Omega)}<\frac{1}{\tilde{\mathcal{S}}_1 h(\infty)}$, where  $h$ satisfies \eqref{h1} and \eqref{h2}. Then there exists a (unique, if $h$ is decreasing and $f>0$ a.e. in $\Omega$)  solution $u$ to problem \eqref{plor} in the sense of Definition \ref{weakdefnonnegative}. 

Moreover,  if $h(0)= \infty$  then $u>0$ a.e. in $\Omega$, and,  if $h\in L^\infty([0,\infty))$ and  \begin{equation}\label{opt}||f||_{L^{N,\infty}(\Omega)}< \frac{1}{\tilde{\mathcal{S}}_1 ||h||_{L^\infty([0,\infty))}}\,,\end{equation} then $u\equiv 0$ a.e. in $\Omega$.

\end{theorem}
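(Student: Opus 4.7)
The plan is to retrace the approximation-plus-passage-to-the-limit strategy developed in Sections \ref{carrie}--\ref{sec:fnonnegativa}, systematically substituting the classical H\"older and Sobolev inequalities with their Lorentz counterparts; the sharper smallness threshold $(\tilde{\mathcal{S}}_1 h(\infty))^{-1}$ will arise precisely because \eqref{soblor} involves the constant $\tilde{\mathcal{S}}_p\to\tilde{\mathcal{S}}_1$ rather than $\mathcal{S}_p$. Since $\Omega$ is bounded, $L^{N,\infty}(\Omega)\hookrightarrow L^{(p^*)'}(\Omega)$ for every $p>1$, so Theorem \ref{existence_p} yields distributional solutions $u_p$ to \eqref{pbplap} and the task reduces to producing bounds on $u_p$ uniform in $p$.

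The core step is therefore a Lorentz analogue of Lemma \ref{lemmapfisso}. Testing the approximating problems \eqref{pbplapapprox} by $G_k(u_n)$ as in the derivation of \eqref{priorigkN}, the duality
$$ \int_\Omega f w \leq \|f\|_{L^{N,\infty}(\Omega)}\|w\|_{L^{N/(N-1),1}(\Omega)} $$
combined with the continuous embedding $L^{p^*,p}(\Omega)\hookrightarrow L^{N/(N-1),1}(\Omega)$, whose constant behaves like $C|\Omega|^{(p-1)/p}\to 1$ as $p\to 1^+$, and with \eqref{soblor}, yields
$$ \|\nabla G_k(u_n)\|_{L^p(\Omega)}^{p-1}\leq C(h(\infty)+\epsilon_k)\,\tilde{\mathcal{S}}_p\,|\Omega|^{(p-1)/p}\,\|f\|_{L^{N,\infty}(\Omega)}. $$
The smallness hypothesis forces the right-hand side strictly below $1$ for $k$ large and $p$ close to $1$, recovering the analogues of \eqref{priorigkN}--\eqref{exubounded}. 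The remaining estimates of Lemma \ref{lemmapfisso} -- namely \eqref{exstima}, \eqref{exbordo} and \eqref{dispotenza} -- follow from the same tests $u_n^\sigma$, $T_{\overline k}(u_n)\phi^p$ and $\psi_p(u_n)$ exactly as before, with the role of $\|f\|_{L^N}$ now played by $\|f\|_{L^{N,\infty}}$.

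With these $p$--uniform bounds in hand, the passage to the limit $p\to 1^+$ is verbatim that of Theorem \ref{existencenonnegative}: one extracts $u\in BV_{\rm loc}(\Omega)\cap L^\infty(\Omega)$ with $u^\sigma\in BV(\Omega)$ and $\chi_{\{u>0\}}\in BV_{\rm loc}(\Omega)$, and a field $z\in\mathcal{D}\mathcal{M}^\infty_{\rm loc}(\Omega)$ with $\|z\|_\infty\leq 1$; properties \eqref{nondef_l1}--\eqref{nondef_bordo} are verified by the same mollification and Gauss--Green arguments of Lemmas \ref{lemma_campoz} and \ref{lemmabordo}, none of which exploits the $L^N$ structure of $f$ in an essential way. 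Uniqueness for decreasing $h$ and strictly positive $f$ follows line by line from the proof of Theorem \ref{uniqueness}, since Lemma \ref{lemmal1} only requires $h(u)f\in L^1_{\rm loc}(\Omega)$. The positivity claim when $h(0)=\infty$ is immediate from \eqref{nondef_l1}, while the vanishing assertion under \eqref{opt} is obtained by comparison with the solutions $w_p$ to \eqref{confronto}: the Lorentz a priori estimate above, applied with the constant nonlinearity $\|h\|_{L^\infty([0,\infty))}$, provides the $L^{N,\infty}$-refinement of \cite[Theorem 4.1]{CT} needed to conclude $w_p\to 0$ a.e., whence $u\equiv 0$ by the comparison principle already used in the proof of Theorem \ref{regularity}.

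The main obstacle is controlling in a quantitatively sharp way the $p$-dependence of both the embedding constant of $L^{p^*,p}(\Omega)$ into $L^{N/(N-1),1}(\Omega)$ and the Sobolev--Lorentz constant $\tilde{\mathcal{S}}_p$: the desired threshold is preserved only if the product of these two constants tends exactly to $\tilde{\mathcal{S}}_1$ as $p\to 1^+$, and not to a strictly larger quantity. This is a purely quantitative statement about Lorentz constants that can be settled via the rearrangement techniques recalled in \cite{A,CRT,PKF}.
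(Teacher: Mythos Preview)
Your proposal is correct and follows essentially the same route as the paper: the proof of Theorem \ref{tuttolor} is reduced to a Lorentz analogue of Lemma \ref{lemmapfisso} obtained by replacing the classical H\"older and Sobolev inequalities with the duality $L^{N,\infty}$--$L^{\frac{N}{N-1},1}$ and the embedding \eqref{soblor}. The technical point you single out as the ``main obstacle'' is precisely the one the paper addresses explicitly: in proving the analogue of \eqref{exbordo} (the test $u_n^\sigma$), the paper computes via rearrangements that $\|u_n^\sigma\|_{L^{\frac{N}{N-1},1}(\Omega)}\le \|u_n^{\frac{\sigma-1+p}{p}}\|_{L^{p^*,p}(\Omega)}^{\frac{p\sigma}{\sigma-1+p}}\,|\Omega|^{\frac{(p-1)(\sigma-1+N)}{N(\sigma-1+p)}}$, and the exponent of $|\Omega|$ tends to $0$ as $p\to 1^+$, confirming your expectation that no spurious constant appears in the limit.
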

		
	\begin{remark}\label{optimo}
	Observe that condition   \eqref{opt} is optimal in the sense that,   if $h\equiv 1$, one can construct a datum $\overline f$ with  $ ||\overline f||_{L^{N,\infty}(\Omega)}= {\tilde{\mathcal{S}_1}^{-1} }$ such that problem \eqref{plor} relative to $\overline f$ admits a non-trivial solution (ie. $u\neq 0$)  (see \cite[Theorem 3.4, Remark 3.2]{CT}).  
	\end{remark}	

As far as the proofs of our  existence, uniqueness and regularity results in the case $f\in L^{N}(\Omega)$ are concerned, the proof of Theorem \ref{tuttolor} is a standard re-adaptation once the analogous of  Lemma \ref{lemmapfisso} is established.  That is, if we consider 
	 \begin{equation}
	 \begin{cases}
	 \displaystyle - \Delta_p u_n= h_n(u_n)f_n &  \text{in}\, \Omega, \\
	 u_n=0 & \text{on}\ \partial \Omega,
	 \label{pbintrolor}
	 \end{cases}
	 \end{equation}
	 where $f\in L^{N,\infty}(\Omega)$ is nonnegative and $f_n=T_n(f)$, $h_n(s)=T_n(h(s))$. Hence Theorem \ref{tuttolor} is a consequence of the following
\begin{lemma}\label{priorilemmalor}
	Let $u_n$ be a solution to \eqref{pbintrolor}. If $\displaystyle ||f||_{L^{N,\infty}(\Omega)}< \frac{1}{\tilde{\mathcal{S}_1} h(\infty)}$ then there exists $\overline{k}>0$ such that $u_n$ satisfies:
	\begin{equation}\label{prioriuboundedlor}
	||G_{k}(u_n)||_{L^{\frac{N}{N-1},1}(\Omega)} \le \frac{p-1}{p} |\Omega|C(\tilde{\mathcal{S}_1}, h(\infty), ||f||_{L^{N,\infty}(\Omega)},\overline{k}),\ \ \ \text{for all } k\geq \overline k,
	\end{equation}
	\begin{equation}\label{priorieqlor}
	||u_n||_{W^{1,p}(\omega)}\le C(\tilde{\mathcal{S}_1}, h(\infty), ||f||_{L^{N,\infty}(\Omega)},\omega, |\Omega|), \ \ \ \text{for all } \omega \subset\subset \Omega,
	\end{equation}	
	\begin{equation}\label{prioribordolor}
	||u_n^{\frac{\sigma-1+p}{p}}||_{W^{1,p}_0(\Omega)} \le C(\tilde{\mathcal{S}_1}, \sup_{s\in [k_0,\infty)} h(s), ||f||_{L^{N,\infty}(\Omega)}, c_1, |\Omega|)\,, 	\end{equation}			\end{lemma}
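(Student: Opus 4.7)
The strategy is to mimic the proof of Lemma \ref{lemmapfisso} with two systematic replacements: H\"older's inequality in $L^N$ becomes the Lorentz pairing $\int_\Omega fg\leq \|f\|_{L^{N,\infty}(\Omega)}\|g\|_{L^{N/(N-1),1}(\Omega)}$, and the classical Sobolev inequality is replaced by its Lorentz refinement \eqref{soblor}. The smallness condition $\tilde{\mathcal S}_1 h(\infty)\|f\|_{L^{N,\infty}}<1$ plays the same absorbing role it had in the $L^N$ case, and the continuity $\tilde{\mathcal S}_p\to \tilde{\mathcal S}_1$ as $p\to 1^+$ ensures that all estimates remain uniform for $p$ slightly above $1$.

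For \eqref{prioriuboundedlor} I plan to test \eqref{pbintrolor} with $G_k(u_n)$, obtaining $\int |\nabla G_k(u_n)|^p\leq (h(\infty)+\epsilon_k)\|f\|_{L^{N,\infty}}\|G_k(u_n)\|_{L^{N/(N-1),1}}$. Young's inequality ($p>1$) applied pointwise gives $\int |\nabla G_k(u_n)|\leq \tfrac{1}{p}\int |\nabla G_k(u_n)|^p+\tfrac{p-1}{p}|\Omega|$; composing with the Lorentz Sobolev \eqref{soblor} at $p=1$ (applied to $G_k(u_n)\in W^{1,1}_0(\Omega)$) yields the self-improving relation
\[
\|G_k(u_n)\|_{L^{N/(N-1),1}}\leq \tilde{\mathcal S}_1\Bigl(\tfrac{1}{p}(h(\infty)+\epsilon_k)\|f\|_{L^{N,\infty}}\|G_k(u_n)\|_{L^{N/(N-1),1}}+\tfrac{p-1}{p}|\Omega|\Bigr),
\]
from which, for $k\geq \overline k$ large and $p$ close to $1^+$, the coefficient $1-\tilde{\mathcal S}_1 p^{-1}(h(\infty)+\epsilon_k)\|f\|_{L^{N,\infty}}$ on the left is bounded below by a positive constant, yielding \eqref{prioriuboundedlor}.

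For \eqref{priorieqlor} I would reproduce verbatim the localization argument of Lemma \ref{priorilemma}: test with $T_k(u_n)\phi^p$ for a cutoff $\phi\in C^1_c(\Omega)$ of $\omega\subset\subset\Omega$, use the identity obtained by testing with $\phi^p$ alone to rewrite $\int h_n(u_n)f_n T_k(u_n)\phi^p$ as $kp\int |\nabla u_n|^{p-2}\nabla u_n\cdot \nabla\phi\,\phi^{p-1}$, split $\nabla u_n=\nabla T_k(u_n)+\nabla G_k(u_n)$, and conclude via Young's inequality in the same fashion as in \eqref{prioritk}--\eqref{prioritk4}. The uniform $L^p$ bound on $\nabla G_k(u_n)$ that enters the argument is now obtained by inserting \eqref{prioriuboundedlor} into the first displayed inequality of the previous paragraph, so no further summability of $f$ beyond $L^{N,\infty}$ is needed.

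The main obstacle will be \eqref{prioribordolor}. Testing \eqref{pbintrolor} with $u_n^\sigma$ and splitting the right-hand side across $\{u_n\leq k_0\}$, $\{k_0<u_n<k_1\}$, $\{u_n\geq k_1\}$ as in \eqref{stimapotenza}, the critical contribution is $(h(\infty)+\epsilon_{k_1})\int_{\{u_n\geq k_1\}} f u_n^\sigma$, which the Lorentz pairing bounds by $(h(\infty)+\epsilon_{k_1})\|f\|_{L^{N,\infty}}\|u_n^\sigma\|_{L^{N/(N-1),1}}$. To express this last norm in terms of $\|\nabla u_n^{(\sigma-1+p)/p}\|_{L^p}$ I plan to use the power rule $\|v^\alpha\|_{L^{q,r}}=\|v\|_{L^{\alpha q,\alpha r}}^\alpha$ with $v=u_n^{(\sigma-1+p)/p}$ and $\alpha=\sigma p/(\sigma-1+p)$, paired with a H\"older step against $\chi_\Omega$ whose Lorentz-conjugate indices are chosen precisely so that the active norm on $v$ becomes $L^{p^*,p}$; the Lorentz Sobolev \eqref{soblor} then delivers $\|u_n^\sigma\|_{L^{N/(N-1),1}}\leq C(|\Omega|,p)\tilde{\mathcal S}_p^\alpha\|\nabla u_n^{(\sigma-1+p)/p}\|_{L^p}^\alpha$, with $C(|\Omega|,p)\to 1$ and $\tilde{\mathcal S}_p^\alpha\to \tilde{\mathcal S}_1$ as $p\to 1^+$. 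Since $\alpha/p=\sigma/(\sigma-1+p)\leq 1$, the resulting inequality takes the form $AX\leq C+BX^{\alpha/p}$; by continuity at $p=1$ of all constants and the smallness assumption one has $A>B$ for $p$ close to $1^+$ and $k_1$ large, and this closes the estimate, producing \eqref{prioribordolor}.
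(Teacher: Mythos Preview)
Your proposal is correct and follows essentially the same approach as the paper: the proofs of \eqref{prioriuboundedlor} and \eqref{priorieqlor} are indeed handled exactly as you describe, and for \eqref{prioribordolor} the paper also tests with $u_n^\sigma$, splits the right-hand side, and reduces the critical term to a bound of $\|u_n^\sigma\|_{L^{N/(N-1),1}}$ by $\|u_n^{(\sigma-1+p)/p}\|_{L^{p^*,p}}^{p\sigma/(\sigma-1+p)}$ times a power of $|\Omega|$, then closes via Young and the smallness condition. The only cosmetic difference is that the paper obtains this last bound by writing the Lorentz norm explicitly via rearrangements and applying ordinary H\"older with exponent $(\sigma-1+p)/\sigma$ to the integral $\int_0^{|\Omega|} t^{-1/N}u_n^*(t)^\sigma\,dt$, rather than invoking the power rule and an abstract Lorentz--H\"older step against $\chi_\Omega$; the explicit route has the slight advantage that no extraneous constant from Lorentz--H\"older appears, so your claim $C(|\Omega|,p)\to 1$ becomes transparent.
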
\bk
\begin{proof}
The proofs of both \eqref{prioriuboundedlor} and \eqref{priorieqlor} strictly follow the ones who led to  \eqref{exstima} and \eqref{exubounded} where, systematically, the H\"older inequality is replaced by the generalized H\"older inequality in Lorentz spaces and \eqref{soblor} substitutes the usual Sobolev's embedding inequality in Lebesgue's spaces. 
The only estimate that  needs some further efforts is \eqref{prioribordolor} and we focus on it. 
As in \eqref{stimapotenza} one fixes $k_1>k_0$ and then  multiplies  \eqref{pbintrolor}  by  $u_n^\sigma$, obtaining 
	\begin{equation}	\label{stimapotenzalor}
	\begin{aligned}
	\displaystyle  \left(\frac{p}{\sigma-1+p}\right)^p\sigma\int_{\Omega}|\nabla u_n^{\frac{\sigma-1+p}{p}}|^p &= \int_{\Omega}h_n(u_n)fu_n^\sigma \le c_{1}k_0^{\sigma-\gamma}\int_{\{u_n\le k_0\}}f
	\\ 
	& + \max_{s\in [k_0,k_1]}h (s) \ k_1^\sigma \int_{\{k_0<u_n< k_1\}} f\ +\  (h(\infty)+\epsilon_{k_1})\int_{\{u_n\ge k_1\}}fu_n^\sigma
    \\  
    &\le 
 C(k_0,\max_{s\in[k_0,k_1]} h(s), ||f||_{L^{1}(\Omega)}, c_1) \bk\\	
	&+  (h(\infty)+\epsilon_{k_1})||f||_{L^{N,\infty}(\Omega)} ||u_n^\sigma||_{L^{\frac{N}{N-1},1} (\Omega)}. 	 
	\end{aligned}	
	\end{equation}
	
	\medskip 
	Now, let $u^{\ast}(t)$ be the non-increasing rearrangement of $u$ for $t\in (0,|\Omega|)$,  and observe that $(u^{q})^{\ast}= (u^*)^q$ for $q>0$.  Using  the H\"older inequality with exponent $\frac{\sigma-1+p}{\sigma}$  we have 
	\begin{align}
		\nonumber ||u_n^\sigma||_{L^{\frac{N}{N-1},1}(\Omega)} &= \int_{0}^{|\Omega|} t^{-\frac{1}{N}} u_n^*(t)^\sigma dt \le \left(\int_{0}^{|\Omega|} t^{-\frac{\sigma-1+p}{N\sigma}} u_n^*(t)^{\sigma-1+p} dt\right)^\frac{\sigma}{\sigma-1+p} |\Omega|^{\frac{p-1}{\sigma-1+p}}
		\\ \nonumber
		&= \left(\int_{0}^{|\Omega|} \left(t^{-\frac{\sigma(1-N)-1+p}{Np\sigma}} u_n^*(t)^{\frac{\sigma-1+p}{p}}\right)^{p} \frac{dt}{t}\right)^\frac{\sigma}{\sigma-1+p} |\Omega|^{\frac{p-1}{\sigma-1+p}}
		\\ \nonumber
		&= \left(\int_{0}^{|\Omega|} \left(t^{\frac{1}{p^*}} u_n^*(t)^{\frac{\sigma-1+p}{p}}\right)^{p} t^{\frac{(\sigma-1)(p-1)}{N\sigma }}\frac{dt}{t} \right)^\frac{\sigma}{\sigma-1+p} |\Omega|^{\frac{p-1}{\sigma-1+p}}
		\\ \nonumber
		&\le \left(\int_{0}^{|\Omega|} \left(t^{\frac{1}{p^*}} u_n^*(t)^{\frac{\sigma-1+p}{p}}\right)^{p} \frac{dt}{t} \right)^\frac{\sigma}{\sigma-1+p} |\Omega|^{\frac{(p-1)(\sigma-1+N)}{N(\sigma-1 +p)}}
		\\ \nonumber
		&= ||u_n^\frac{\sigma-1+p}{p}||^\frac{p\sigma}{\sigma-1+p}_{L^{p^*,p}(\Omega)} |\Omega|^{\frac{(p-1)(\sigma-1+N)}{N(\sigma-1 +p)}}
		\\ \nonumber
		&\le \frac{\sigma}{\sigma-1+p}||u_n^\frac{\sigma-1+p}{p}||^p_{L^{p^*,p}(\Omega)} 
		+ \frac{p-1}{\sigma-1+p} |\Omega|^{\frac{\sigma-1+N}{N}}\,,
	\end{align}
	where in the last step  we  used Young's inequality. 
	Concerning  the left hand side of \eqref{stimapotenzalor} we have 
	\begin{align*}
	\displaystyle  \left(\frac{p}{\sigma-1+p}\right)^p\sigma\int_{\Omega}|\nabla u_n^{\frac{\sigma-1+p}{p}}|^p \ge \displaystyle  \left(\frac{p}{\sigma-1+p}\right)^p\frac{\sigma}{ \tilde{\mathcal{S}}_p^{p}}||u_n^\frac{\sigma-1+p}{p}||^p_{L^{p^*,p}(\Omega)} \,,
	\end{align*}	 	
	and gathering  the previous two inequalities with  \eqref{stimapotenzalor} we deduce 
	\begin{align}\label{stimapotenza2lor}
	\displaystyle  \left(\left(\frac{p}{\sigma-1+p}\right)^p\frac{\sigma}{ \tilde{\mathcal{S}}_p^{p}} - (h(\infty)+\epsilon_{k_1})||f||_{L^{N,\infty}(\Omega)} \frac{\sigma}{\sigma-1+p}  \right)||u_n^\frac{\sigma-1+p}{p}||^p_{L^{p^*,p}(\Omega)}\le C, 
	\end{align}
	where, since $\displaystyle ||f||_{L^{N,\infty}(\Omega)}< \frac{1}{\tilde{\mathcal{S}}_1 h(\infty)}$, one can pick $p$ near to $1$ and $k_1$  large enough such that
	$$
	\displaystyle  \left(\left(\frac{p}{\sigma-1+p}\right)^p\frac{\sigma}{ \tilde{\mathcal{S}}_p^{p}} - (h(\infty)+\epsilon_{k_1})||f||_{L^{N,\infty}(\Omega)} \frac{\sigma}{\sigma-1+p} \right)>c>0
	$$ 
	for a constant    $c$ that does not depend on both $p$ and $k_1$. 
	Using estimate \eqref{stimapotenza2lor} in \eqref{stimapotenzalor} one finally deduces \eqref{prioribordolor}.
\end{proof}

		\section{Further extensions, remarks, and examples}\label{secfinal}
		\setcounter{equation}{0}	
		
		\subsection{Some global $BV$ solutions}\label{finite}		
	
	\label{energiafinita}
	
	As we have seen the fact $u^\sigma\in BV(\Omega)$ is crucial in order to prove the existence and uniqueness of a solution to \eqref{pb1lap}. Due to the possible degeneracy of the datum $f$ and to the weak requests one assumes on the nonlinearity $h$ this step seems to be needed, in general, in order to conclude.   
	
	\medskip

	Although, a natural question is whether the solution to \eqref{pb1lap} enjoys itself the further property to have global finite energy  in the natural space  $BV(\Omega)$. To fix the ideas, if $\Omega$ is a smooth domain,  consider the problem 
	\begin{equation}
	\label{op}				\begin{cases}
	\displaystyle - \Delta_p u= \frac{f}{u^\gamma} &  \text{in}\, \Omega, \\
	u=0 & \text{on}\ \partial \Omega,
	\end{cases}
	\end{equation}	
	where $\gamma>0$ and $f$ is an H\"older continuous function that is bounded away from zero on $\Omega$. If $p=2$ and $\gamma\geq 1$  it can be proven that solutions belong to the natural space $H^1_0(\Omega)$ if and only if $\gamma<3$ (\cite{LM}, see also \cite{SZ, OP} for further refinements). Extensions to the case $p>1$ are also available (\cite{S}) and the threshold becomes $\gamma<\frac{2p-1}{p-1}$, suggesting that, as $p\to 1^+$, one should recover the global $BV$ regularity of the solutions for any $\gamma>0$ at least for both a non-degenerate datum and a smooth domain. Recall that, as for the case $p>1$ with sufficiently integrable data, if $\gamma\leq 1$ (see \cite{DGS}) then solutions to problem \eqref{op} always have finite energy  if  $p=1$ (compare with Theorem \ref{regularity} above).  
	
	\medskip 
	
	In order to better understand this phenomenon one can look  at the proof of the result in \cite{LM}.  One immediately realizes that, at regular boundary points, the slope of the solutions to \eqref{op}  become larger and larger as $\gamma$ grows eventually leading the solution to  loose its $C^1$ regularity (beyond  $\gamma=1$) and its $H^1$ regularity (at $\gamma=3$).  Hence, 
	due to the fact that the boundary datum needs not to be attained in the classical sense, and to the particular nature of $BV$ (that allows jumps),  it seems reasonable that, for any fixed $\gamma>0$ solutions to \eqref{op} may become globally $BV$ as $p$ reaches $1$ (at least if the datum $f$ does not degenerate at zero).

	\medskip
	In this section we want to present  some further evidences of this fact. Though a more general right hand side can be considered, in order to simplify the exposition we consider the simplest model 
	\begin{equation}\label{lm}
	\begin{cases}
	\displaystyle - \Delta_1 u= {u^{-\gamma}} &  \text{in}\, \Omega, \\
	u=0 & \text{on}\ \partial \Omega
	\,.
	\end{cases}
	\end{equation}
	
	We will construct an example showing  that, for a rich enough class of domains, solutions  to \eqref{lm} belongs  to $BV(\Omega)$, for any $\gamma>0$. We first need the following

	\begin{defin}\label{calib}
		We say that a bounded convex set $E$ of  class $C^{1,1}$  is {\it calibrable} if there exists a vector field $\xi\in L^\infty(\R^N, \R^N)$ such that $\|\xi\|_\infty\leq 1$, $(\xi,D\chi_E)=|D\chi_E|$ as measures,  and $$-{\rm div}\xi=\lambda_E\chi_E \quad {\rm in \ } \mathcal D'(\R^N) $$ for some constant $\lambda_E$. In this case  $\lambda_E=\frac{Per(E)}{|E|}$ and  $[\xi,\nu^E]=-1$, $\mathcal H^{N-1}$-a.e in $\partial E$ (see \cite[Section 2.3]{acc} and \cite{MP}).
	\end{defin}
	
	As a consequence of   \cite[Theorem 9]{acc}  a bounded and convex set $E$ is calibrable if and only if the following condition holds: $$(N-1)\|{\bf H}_E\|_{L^\infty(\partial E)}\leq \lambda_E=\frac{ Per(E)}{|E|},$$ where ${\bf H}_E$ denotes the ($\mathcal H^{N-1}$-a.e. defined) mean curvature of $\partial E$. In particular, if $E=B_R(0)$, for some $R>0$, then $E$ is calibrable.
	
	\begin{example}\label{ex1}
		 If $\Omega$ is a calibrable set, let us prove that $u=\left(\frac{|\Omega|}{Per(\Omega)}\right)^{\frac{1}{\gamma}}$ is the unique  solution to \eqref{lm} in the sense of Definition \ref{weakdef}. It suffices to take the restriction to $\Omega$ of the vector field in the definition of calibrability; i.e.: $z:=\xi_{\res_\Omega}$. In fact, due to the properties of  $\xi$ one has
		\begin{equation}\label{usi}
		-{\rm div}z=\frac{Per(\Omega)}{|\Omega|}=u^{-\gamma}\ \ \ \text{and}\ \ \  [\xi,\nu^{\Omega}]=-1\,.
		\end{equation}
		
		Moreover, using both   \eqref{GreenIII} \bk and \eqref{usi}, one finally gets 
		$$\begin{array}{l}\dys  (z,Du)(\Omega)=\int_\Omega \left(\frac{|\Omega|}{Per(\Omega)}\right)^{\frac{1}{\gamma}}\frac{Per(\Omega)}{|\Omega|}\,dx\\\\\dys +\int_{\partial\Omega}[\xi,\nu^\Omega]\left(\frac{|\Omega|}{Per(\Omega)}\right)^{\frac{1}{\gamma}}\,d\mathcal H^{N-1}=0= |Du|(\Omega).\end{array}$$
	\end{example}

	\begin{remark}
		
		Observe that the solutions of \eqref{lm} given by Theorem \ref{existence} belong to $BV(\Omega)$ once they  are bounded away from zero on $\Omega$; in fact, let $u\geq a>0$, then,  due to property \eqref{eqregul} one has $
		u=S(u^\gamma)\in BV(\Omega),
		$	
		where $S$ is the Lipschitz continuous function defined by
		$$
		S(s)=\max{(a, s^\frac{1}{\gamma} )}\,.
		$$
		Let us show a situation in which  $u\ge a>0$ holds.  
		Let $\Omega$ be a convex open set. In \cite{MP} it is shown that $-H_{\Omega} (x)$ is a (so called) large solution to $\Delta_1 v= v $, i. e. 
		\begin{equation}\label{large}			\begin{cases}
		\displaystyle \Delta_1 v= v &  \text{in}\, \Omega, \\
		v=\infty & \text{on}\ \partial \Omega,			\end{cases}
		\end{equation}
		where $H_{\Omega} (x)$ is the variational mean curvature of $\Omega$ (see \cite{BGT} for details). Without entering into technicalities, only recall that $\|H_\Omega\|_{L^{\infty}(\rn)}< \infty$ if and only if $\Omega$ is of class $C^{1,1}$; in particular,  these  solutions only assume the (large) datum $\infty$  at non-regular points of $\Omega$ (e.g. at corners).  
		
		As through the change of variable $u=v^{-\frac{1}{\gamma}}$ problem \eqref{large} formally transforms into \eqref{lm} (using also the homogeneity of the operator) then one can expect that solutions to  problem \eqref{lm} always belong to $BV(\Omega)$ if $\Omega$ is a convex bounded  $C^{1,1}$ domain and that, in general,  the Dirichlet homogeneous boundary datum is only assumed pointwise at  non-smooth points of $\Omega$. 	
	\end{remark}

			\subsection{More general growths}	\label{more}

			Here we  show how the assumption on the control of $h$ near zero can be removed allowing more general growths not satisfying \eqref{h1}. 
Consider the general  problem 			
			\begin{equation} \label{pb1lapfs}
			\begin{cases}
			\displaystyle - \Delta_{1} u = F(x,u) &  \text{in}\, \Omega, \\
			u=0 & \text{on}\ \partial \Omega,			
			\end{cases}
			\end{equation}
			where $F(x,s)$ is a nonnegative Carath\'eodory function satisfying 
\begin{equation}\label{controlloF}
	F(x,s)\leq h(s)f(x),\ \ \forall\ (x,s)\in \Omega\times [0,\infty)\,,
\end{equation}
with $0 \le f\in L^{N,\infty}(\Omega)$,  and $h$ is a continuous function in $[0,\infty)$ satisfying \eqref{h2}. \\
First of all, without loss of generality,  we can assume that $h$ such that
\begin{equation}\label{hdecreasing}
	h \text{ is decreasing}, h \in C^{1}((0,\infty)), h^{-1}\in C^1([0,\infty)), h^{-1}(0)=0\,,
\end{equation}
where $h^{-1}$ stands for the reciprocal of $h$.  Indeed, for any given $h$ satisfying our assumptions one can construct (see for instance \cite[Remark 2.1, vii)]{GMM}) a function $\overline h$ such that \eqref{hdecreasing} holds and 
$$h(s)\leq \overline h (s), \ \ \forall s\geq0.$$
As for the previous sections we look for  a solution of \eqref{pb1lapfs} through an approximation argument,  letting $p\to 1^+$ in the solutions to
\begin{equation}\label{pbplapfs}
	\begin{cases}
	\displaystyle - \Delta_p u_p= F(x,u_p) &  \text{in}\, \Omega, \\
	u_p=0 & \text{on}\ \partial \Omega.
	\end{cases}
\end{equation}	
The notion of solution to \eqref{pbplapfs} for $p> 1$ is  the following one.
 \begin{defin}\label{distributionalgeneral}
	A nonnegative function $u_p\in W^{1,p}_{\rm{loc}}(\Omega)$ is a \emph{distributional solution} to problem \eqref{pbplapfs} if 
	\begin{equation}
	F(x,u_p)\in L^1_{\rm loc}(\Omega), \label{pweakdefb1}
	\end{equation}
	\begin{equation}
	G_{k}(u_{p})\in W^{1,p}_{0}(\Omega), \ \ \ \text{for all } k>0,  \label{pweakdef2}
	\end{equation}	
	and
	\begin{equation}
	\int_{\Omega}|\nabla u_p|^{p-2} \nabla u_p\cdot \nabla \varphi = \int_{\Omega}F(x,u)\varphi, \label{pweakdefb}
	\end{equation}
	for every $\varphi \in C^1_c(\Omega)$\bk.	
\end{defin}	
We have the following result whose proof, using \eqref{controlloF},  easily follows line by line the proof of Theorem \ref{existence_p}.  Only observe that, in this case, the approximating problems read as
	 \begin{equation}
	 \begin{cases}
	 \displaystyle - \Delta_p u_n= F_{n}(x,u_{n}) &  \text{in}\, \ \Omega, \\
	 u_n=0 & \text{on}\ \partial \Omega, \label{pbintrolorn}
	 \end{cases}
	 \end{equation}
	 where $F_{n}(x,s)= F(x,T_{n}(s))$.  Moreover we set
\begin{equation}\label{beta}
	\beta_p(s)=\int_{0}^{s} ((h^{-1}(t))')^{\frac{1}{p}}dt\,.
\end{equation}	  
\begin{theorem}\label{existence_pgeneral}
	Let $F$ satisfy \eqref{controlloF} where $0\le f\in L^{(p^*)'}(\Omega)$ and let $h$ satisfy \eqref{hdecreasing} and \eqref{h2}. Then there exists a solution $u_p$ to problem \eqref{pbplapfs} in the sense of Definition \ref{distributionalgeneral} such that $\beta_p(u_p) \in W^{1,p}_0(\Omega)$.
\end{theorem}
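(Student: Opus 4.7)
The strategy mirrors the proof of Theorem \ref{existence_p}, with the novelty of a global estimate on $\beta_p(u_p)$ obtained by testing with the reciprocal $h^{-1}(u_p)=1/h(u_p)$. First, I would introduce the approximating problems \eqref{pbintrolorn}, where $F_n(x,s)$ is suitably truncated so that the right-hand side becomes bounded (in particular cut off both near the possible singularity at $s=0$ and at infinity, while still satisfying $F_n(x,s)\le h(s)f(x)$ and $F_n\uparrow F$). Schauder's fixed point theorem then provides nonnegative solutions $u_n\in W^{1,p}_0(\Omega)\cap L^\infty(\Omega)$, in the standard way.

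For the a priori estimates I would adapt Lemma \ref{priorilemma} verbatim: exploiting $F_n(x,s)\le h(s)f(x)$, the behaviour \eqref{h2}, and $f\in L^{(p^*)'}(\Omega)$, testing with $G_k(u_n)$ for $k$ large (so that $h$ is as close to $h(\infty)$ as needed) yields $\|G_k(u_n)\|_{W^{1,p}_0(\Omega)}\le C$. The usual cut-off argument with $T_k(u_n)\phi^p$ then gives local bounds $\|u_n\|_{W^{1,p}(\omega)}\le C_\omega$ for every $\omega\subset\subset\Omega$, which, by weak lower semicontinuity, will propagate to the limit $u_p$ and provide \eqref{pweakdef2}.

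The key new step is the global $\beta_p$-estimate. Since $h^{-1}\in C^1([0,\infty))$ with $h^{-1}(0)=0$ and $u_n\in W^{1,p}_0(\Omega)\cap L^\infty(\Omega)$, the composition $h^{-1}(u_n)$ is an admissible test function in $W^{1,p}_0(\Omega)$. Recalling that $h$ is decreasing so $(h^{-1})'=-h'/h^2\ge 0$, and that $h(s)h^{-1}(s)=1$, one obtains
\begin{equation*}
\int_\Omega (h^{-1})'(u_n)\,|\nabla u_n|^p\,dx \;=\; \int_\Omega F_n(x,u_n)\,h^{-1}(u_n)\,dx \;\le\; \int_\Omega h(u_n)h^{-1}(u_n)f\,dx \;=\; \int_\Omega f\,dx.
\end{equation*}
By the very definition \eqref{beta}, the left-hand side equals $\int_\Omega |\nabla\beta_p(u_n)|^p$, so $\beta_p(u_n)$ is bounded in $W^{1,p}_0(\Omega)$ uniformly in $n$.

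Finally I would pass to the limit as $n\to\infty$. The local estimates give, up to subsequences, $u_n\to u_p$ a.e.\ and weakly in $W^{1,p}_{\mathrm{loc}}(\Omega)$; the almost everywhere convergence of $\nabla u_n$ to $\nabla u_p$ is then obtained via Theorem 2.1 of \cite{bm}, exactly as in Theorem \ref{existence_p}. The main obstacle I foresee is the identification of the right-hand side $F_n(x,u_n)\to F(x,u_p)$ when $F$ is singular at $s=0$: this is handled by the $V_\delta$-truncation trick already used in the proof of Theorem \ref{existence_p}, splitting $\{u_n\le\delta\}$ from $\{u_n>\delta\}$ and passing first to the limit in $n$ and then in $\delta\to 0^+$ using \eqref{controlloF} together with the local $L^1$ control on $F_n(x,u_n)$ coming from the equation. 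Weak lower semicontinuity of the norms, applied both to $G_k$ and to $\beta_p$, then yields $G_k(u_p)\in W^{1,p}_0(\Omega)$ and $\beta_p(u_p)\in W^{1,p}_0(\Omega)$, completing the verification of Definition \ref{distributionalgeneral}.
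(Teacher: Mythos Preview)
Your proposal is correct and follows essentially the same approach as the paper, which states that the proof ``using \eqref{controlloF} easily follows line by line the proof of Theorem \ref{existence_p}'' with the approximations \eqref{pbintrolorn}. The one additional ingredient---the global bound on $\beta_p(u_n)$ via the test function $h^{-1}(u_n)$---is precisely the step the paper indicates (see the derivation of \eqref{stimabeta} in the proof of Theorem \ref{existencenonnegativefs}), and your computation $\int_\Omega|\nabla\beta_p(u_n)|^p=\int_\Omega(h^{-1})'(u_n)|\nabla u_n|^p\le\int_\Omega f$ is exactly the intended one.
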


The following counterpart of \eqref{dispotenza} can be proven:  
\begin{equation}\label{dispotenzagen}
	\int_{\Omega} |\nabla \underline{\Gamma}_{p}(u_{p})|^p \le  \int_{\Omega}  F(x,u_p) \underline{\psi}_p(u_{p}),	
\end{equation}
where, for $\delta> 0$, 
\begin{equation*}\label{psigeneral}
	\underline{\psi}_p(s)= 
	\begin{cases}
	h^{-1}(s)\frac{\beta_p(\delta)}{h^{-1}(\delta)} \ \ \ &\text{if }s<\delta, \\
	\beta_p(s)   &\text{if }s\ge\delta\,,
	\end{cases}
\end{equation*}
and $\underline\Gamma_{p}$ is the primitive of $\underline{\psi}'^{\frac{1}{p}}_p(s)$ (such that $\underline\Gamma_{p}=0$).

Here is how the definition of solution to problem \eqref{pb1lapfs} can be suitably modified:
\begin{defin}\label{weakdefnonnegativefs}
	 A function $u\in BV_{\rm{loc}}(\Omega)\cap L^\infty(\Omega)$ having $\chi_{\{u>0\}} \in BV_{\rm{loc}}(\Omega)$ and $\beta_1(u) \in BV(\Omega)$ ($\beta_1$ is defined in \eqref{beta}) is a solution to problem \eqref{pb1lapfs} if there exists $z\in \mathcal{D}\mathcal{M}^\infty_{\rm{loc}}(\Omega)$ with $||z||_{\infty}\le 1$ such that
	\begin{align}
	&F(x,u)\in L^1_{\rm{loc}}(\Omega), 
	\\
	&-(\operatorname{div}z)\chi^*_{\{u>0\}} = F(x,u) \ \ \ \text{in  } \mathcal{D'}(\Omega), \label{nondef_eqdistrfs}
	\\
	& (z,D u)=|D u| \label{nondef_campofs} \ \ \ \ \text{as measures in } \Omega,
	\\
	&  \beta_1(u(x)) + [\beta_1(u) z,\nu] (x)= 0 \label{nondef_bordofs}\ \ \ \text{for  $\mathcal{H}^{N-1}$-a.e. } x \in \partial\Omega.
	\end{align}
\end{defin}		
	
One finally has the following 
\begin{theorem}\label{existencenonnegativefs}
	Let $0\le f\in L^{N,\infty}(\Omega)$ such that $\displaystyle ||f||_{L^{N,\infty}(\Omega)}< \frac{1}{\tilde{\mathcal{S}_1} h(\infty)}$ and let $h$ satisfy \eqref{h2}. Then there exists a solution $u$ to problem \eqref{pb1lapfs} in the sense of Definition \ref{weakdefnonnegativefs}. Moreover
	if $F(x,0)= \infty$ then $u>0$ a.e. in $\Omega$. Otherwise if $F(x,0)< \infty$ and if $||f||_{L^{N,\infty}(\Omega)}< (\tilde{\mathcal{S}_1} ||h||_{L^\infty([0,\infty))})^{-1}$ then $u\equiv 0$ a.e. in $\Omega$. 
\end{theorem}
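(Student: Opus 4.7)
The plan is to follow the now-established approximation strategy: approximate \eqref{pb1lapfs} by the $p$-Laplacian problems \eqref{pbplapfs} and pass to the limit as $p\to 1^{+}$. Theorem \ref{existence_pgeneral} provides distributional solutions $u_p$ satisfying $\beta_p(u_p)\in W^{1,p}_0(\Omega)$ and $G_k(u_p)\in W^{1,p}_0(\Omega)$. The first task is to adapt Lemma \ref{priorilemmalor} to derive uniform-in-$p$ estimates: a local $W^{1,p}$-bound on $u_p$, an $L^{N/(N-1),1}$-bound on $G_k(u_p)$ for $k\ge \overline k$ large (yielding an $L^\infty$ bound in the limit), and a global $W^{1,p}_0$-bound on $\beta_p(u_p)$. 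The last is obtained by combining \eqref{dispotenzagen} with the Lorentz--Sobolev inequality \eqref{soblor} and absorbing terms thanks to the hypothesis $\tilde{\mathcal{S}}_1 h(\infty)\|f\|_{L^{N,\infty}(\Omega)}<1$. Since $((h^{-1})'(t))^{1/p}\to (h^{-1})'(t)$ pointwise, $\beta_p\to \beta_1$ uniformly on compact subsets of $[0,\infty)$, so the limit $u\in BV_{\rm{loc}}(\Omega)\cap L^\infty(\Omega)$ inherits the property $\beta_1(u)\in BV(\Omega)$ by weak lower semicontinuity.

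The second step is to pass to the limit in the equation itself. A weak-$L^q$ limit $z$ of $|\nabla u_p|^{p-2}\nabla u_p$ is extracted for every $q<\infty$, with $\|z\|_\infty\le 1$, so in particular $z\in \DM^\infty_{\rm{loc}}(\Omega)$. Fatou's lemma in the distributional formulation gives $F(x,u)\in L^1_{\rm{loc}}(\Omega)$. Testing \eqref{pbplapfs} against $S_\delta(u_p)\varphi$ (where $S_\delta:=1-V_\delta$ and $V_\delta$ is as in \eqref{V_delta}) and $V_\delta(u_p)\varphi$ and then sending first $p\to 1^{+}$ and then $\delta\to 0^{+}$, as in the proof of Theorem \ref{existencenonnegative}, produces $\chi_{\{u>0\}}\in BV_{\rm{loc}}(\Omega)$ together with the distributional identity \eqref{nondef_eqdistrfs}. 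The usual precaution of choosing $\delta$ outside the at-most-countable set $\{\eta:|\{u=\eta\}|>0\}$ is needed.

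The main technical obstacle will be the identification \eqref{nondef_campofs}. The plan is to use $\beta_p(u_p)\varphi$ (for $0\le\varphi\in C^1_c(\Omega)$) as a test function in \eqref{pbplapfs}. Young's inequality lets the principal part dominate $\int_\Omega \varphi|\nabla\beta_p(u_p)|$, which passes to the limit via lower semicontinuity against $\int_\Omega \varphi|D\beta_1(u)|$; the remaining terms pass by the weak-$L^q$ convergence of $|\nabla u_p|^{p-2}\nabla u_p$ and the strong $L^q$-convergence of $\beta_p(u_p)$ to $\beta_1(u)$ for $q<N/(N-1)$, using the splitting $\{u_p\le\delta\}\cup\{u_p>\delta\}$ as in Lemma \ref{lemma_campoz}. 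Combining with the distributional identity $-(\beta_1(u))^*\operatorname{div}z=F(x,u)\beta_1(u)$ (obtained from \eqref{nondef_eqdistrfs} by convolution approximation as in the proof of \eqref{equ}) and Proposition \ref{nuova}, one deduces $|D\beta_1(u)|=(z,D\beta_1(u))$ as measures. Since $h$ is decreasing, $\beta_1=h^{-1}$ is strictly increasing, and the Chain rule \eqref{radonnik} transfers this to $(z,Du)=|Du|$, giving \eqref{nondef_campofs}.

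Finally, the boundary condition \eqref{nondef_bordofs} follows from \eqref{dispotenzagen} combined with Young's inequality and the fact that $\beta_p(u_p)$ has zero trace in $W^{1,p}_0(\Omega)$: passing to the limit and using the variational identity of the previous step together with the Gauss--Green formula \eqref{GreenIII} yields $\beta_1(u)(1+[z,\nu])=0$ on $\partial\Omega$, which by \eqref{des2} is exactly \eqref{nondef_bordofs}. The final dichotomy on the sign of $u$ follows the argument in the proof of Theorem \ref{regularity}: if $F(x,0)=\infty$, the local integrability of $F(x,u)$ forces $u>0$ a.e.; if $F(x,0)<\infty$ together with the stronger smallness condition, comparison with the solutions $w_p$ of $-\Delta_p w_p=\|h\|_\infty f$ (which tend to zero by a straightforward Lorentz-space analogue of \cite[Theorem 4.1]{CT}) gives $u\equiv 0$ a.e. in $\Omega$.
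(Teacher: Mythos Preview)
Your proposal is correct and follows essentially the same route as the paper: approximate by \eqref{pbplapfs}, obtain the uniform estimates of Lemma \ref{priorilemmalor} together with a global $W^{1,p}_0$-bound on $\beta_p(u_p)$, extract $z$, derive \eqref{nondef_eqdistrfs} via the $S_\delta$/$V_\delta$ cutoffs as in Theorem \ref{existencenonnegative}, and identify $(z,Du)=|Du|$ by testing with $\beta_p(u_p)\varphi$ and invoking \eqref{radonnik} for the increasing map $\beta_1=h^{-1}$.

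There is one small slip at the boundary step. In the merely nonnegative case $z$ is only known to lie in $\DM_{\rm loc}(\Omega)$, so $[z,\nu]$ is not defined and \eqref{des2} cannot be invoked. The Gauss--Green formula \eqref{GreenIII}, applied with $v=\beta_1(u)$ (which is legitimate since $(\beta_1(u))^*\operatorname{div}z=-F(x,u)\beta_1(u)\in L^1(\Omega)$, hence $\beta_1(u)z\in\DM(\Omega)$ by Proposition \ref{poiu}), already produces the trace $[\beta_1(u)z,\nu]$ directly. After passing to the limit one obtains $\int_{\partial\Omega}\beta_1(u)\,d\mathcal H^{N-1}\le -\int_{\partial\Omega}[\beta_1(u)z,\nu]\,d\mathcal H^{N-1}$, and since $|[\beta_1(u)z,\nu]|\le \beta_1(u)|_{\partial\Omega}\|z\|_\infty\le \beta_1(u)|_{\partial\Omega}$, equality follows pointwise $\mathcal H^{N-1}$-a.e., which is precisely \eqref{nondef_bordofs}. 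No splitting via \eqref{des2} is needed (or available).
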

\begin{proof}

The proof is a suitable modification of the one of Theorems \ref{existence},  \ref{existencenonnegative}, and \ref{tuttolor};  we only highlight the main differences. 
One starts with the solutions $u_{n}$ of \eqref{pbintrolorn}. As $f\in L^{N,\infty}(\Omega)$ with  $\dys ||f||_{L^{N,\infty}(\Omega)} < \frac{1}{\tilde{\mathcal{S}}_{1} h(\infty)}$, then, uniform estimates hold. In fact,  using \eqref{controlloF},  both  \eqref{prioriuboundedlor} and \eqref{priorieqlor}  continue to hold,  and, recalling \eqref{hdecreasing},  one can show  that $p_{0}$ exists such that for any $p\in (1,p_0)$ 
\begin{equation}\label{stimabeta}
	||\beta_p(u_n)||_{W^{1,p}_0(\Omega)} \le C(\tilde{\mathcal{S}}_1, \sup_{s\in [1,\infty)}h(s), ||f||_{L^{N,\infty}(\Omega)}).
\end{equation}
This is done by considering the solutions to \eqref{pbintrolorn} and   taking $h^{-1}(u_{n})$ as test.   
 By weak lower semicontinuity the same holds for  $u_p$. One then deduces, reasoning as in the proof of Lemma  \ref{lemma_campoz},   the existence of a vector field  $z\in \mathcal{D}\mathcal{M}^\infty_{\rm{loc}}(\Omega)$ with $||z||_{\infty}\le 1$ which is the $\ast$-weak limit of $|\nabla u_p|^{p-2}\nabla u_p$. The proof of \eqref{nondef_eqdistrfs} can be derived as for \eqref{nondef_eqdistr}.
Now recalling \eqref{stimabeta}, we have
\begin{equation*}
	\int_{\Omega} |\nabla \beta_p(u_p)| \le \frac{1}{p}\int_{\Omega} |\nabla \beta_p(u_p)|^p +\frac{1}{p'} |\Omega|\le C,  
\end{equation*} 
which implies that $\beta_p(u_p)$ is bounded in $BV(\Omega)$ and then it strongly converges  in $L^q(\Omega)$ to its a.e. limit $\beta_1(u)\in BV(\Omega)$. Moreover $\nabla \beta_p(u_p)$ converges $\ast$-weakly in the sense of measures to $D\beta_1(u)$. 
Hence reasoning exactly as in the proof of \eqref{equ} it yields 
\begin{equation}\label{equbeta}
	 -\beta_1(u)^*\operatorname{div}z =  F(x,u) \beta_1(u) \text{  in  } \mathcal{D'}(\Omega).
\end{equation}			 
In order to prove \eqref{nondef_campofs}, one  lets $0\le \varphi \in C^1_c(\Omega)$ and $\beta_p(u_p)\varphi$ to  test \eqref{pbplapfs} obtaining
\begin{equation*}	
	 \int_{\Omega} \varphi|\nabla u_p|^{p}\beta'_p(u_p) + \int_{\Omega} \beta_p(u_p)|\nabla u_p|^{p-2}\nabla u_p \cdot \nabla \varphi = \int_{\Omega}  F(x,u_p) \beta_p(u_p) \varphi.	
\end{equation*}
Thus, by  Young's inequality, one deduces
\begin{align*}
	\int_{\Omega} \varphi|\nabla u_p\beta'_p(u_p)^{\frac{1}{p}}| +\int_{\Omega} \beta_p(u_p)|\nabla u_p|^{p-2}\nabla u_p \cdot \nabla \varphi\le \int_{\Omega}  F(x,u_p) \beta_p(u_p) \varphi + \frac{p-1}{p}\int_{\Omega}\varphi.	
\end{align*}
Reasoning as in the proof of Lemma \ref{lemma_campoz} all but the first term are shown to  pass to the limit with respect to $p$. Then by lower semicontinuity we  obtain, recalling \eqref{equbeta},  
\begin{equation*}
	\int_{\Omega} \varphi|D \beta_1(u)| + \int_{\Omega} \beta_1(u) z \cdot \nabla \varphi \le \int_{\Omega}  F(x,u) \beta_1(u) \varphi = -\int_{\Omega}(\beta_1(u))^*\varphi \operatorname{div}z, \ \ \ \forall \varphi\in C^1_c(\Omega), \ \ \varphi \ge 0\,,	
\end{equation*}
and we can apply  Proposition \ref{nuova} to deduce 
\begin{equation*}
	\int_{\Omega} \varphi|D \beta_1(u)| \le - \int_{\Omega} \beta_1(u) z \cdot \nabla \varphi -\int_{\Omega} \beta_1(u)^* \varphi\operatorname{div}z = \int_{\Omega}\varphi (z, D \beta_1(u)),  \ \ \ \forall \varphi\in C^1_c(\Omega), \ \ \varphi \ge 0\,,	
\end{equation*}			 	 	 then 
\begin{equation*}		
	\int_{\Omega} \varphi|D \beta_1(u)|  \le \int_{\Omega}\varphi (z, D \beta_1(u)),  \ \ \ \forall \varphi\in C^1_c(\Omega), \ \ \varphi \ge 0 \,,
\end{equation*}	 
that implies $|D \beta_1(u)| = (z, D \beta_1(u))$ as  $||z||_{\infty}\le 1$. Now since $\beta_1(s)$ is locally Lipschitz one obtains \eqref{nondef_campofs} by the same argument as in the last step of the proof of Lemma \ref{lemma_campoz}. 

It is left to prove \eqref{nondef_bordofs}. It follows by using \eqref{dispotenzagen} and Young's inequality that 
\begin{equation*}
	\int_{\Omega} |\nabla \underline{\Gamma}_p (u_p)|  + \int_{\partial \Omega}\beta_p(u_p) d\mathcal{H}^{N-1}  \le  \int_{\Omega}  F(x,u_p) \underline{\psi}_p(u_{p}) +\frac{p-1}{p}|\Omega|,	
\end{equation*}
then by  weak lower semicontinuity one can use  Gauss-Green formula \eqref{GreenIII} and   \eqref{equbeta} to have 
\begin{equation*}
	\int_{\Omega} |D \beta_1(u)| + \int_{\partial \Omega}\beta_1(u) d\mathcal{H}^{N-1} \le  \int_{\Omega}(z,D \beta_1(u)) - \int_{\partial \Omega} [\beta_1(u) z,\nu]d\mathcal{H}^{N-1},
\end{equation*}		 
that, as   $|D \beta_1(u)| = (z,D \beta_1(u))$,  gives \eqref{nondef_bordofs}.
\end{proof}

\end{document}